\documentclass[10pt]{amsart}
\usepackage{amsmath}
\usepackage{paralist}
\usepackage{graphicx} 
\usepackage{epstopdf}
\usepackage[colorlinks=true]{hyperref}
\usepackage{float}
\usepackage{color,comment}
\usepackage{caption}
\usepackage{wrapfig}
\usepackage{multirow}
\usepackage{tabularx}
\usepackage{booktabs}
\usepackage{orcidlink}
\usepackage{subfigure}
\usepackage[toc]{appendix}
\setcounter{topnumber}{2}
\setcounter{bottomnumber}{2}
\setcounter{totalnumber}{4}

\newtheorem{theorem}{Theorem}[section]

\newtheorem{conjecture}{Conjecture}

\theoremstyle{definition}
\newtheorem{definition}[theorem]{Definition}
\newtheorem{remark}{Remark}[section]

\DeclareMathOperator{\Tr}{tr}
\subjclass[2010]{34C23; 92D25; 92D30}
 \keywords{Eco-epidemiology; Finite time extinction; Bifurcation analysis; Dual fear effect; Selective predation}

\begin{document}
\title[Dual Fear with Prey Aggregation]{Dual Fear Phenomenon in an Eco-Epidemiological Model with Prey Aggregation}

\author[Antwi-Fordjour, Westmoreland, Bearden]{}
\maketitle

\centerline{\scshape Kwadwo Antwi-Fordjour$^{*1} {\orcidlink{0000-0002-4961-0462}}$, Sarah P. Westmoreland$^1$,   Kendall H. Bearden$^1$}

\vspace{.7cm}
{\centerline{1) Department of Mathematics and Computer Science,}
 \centerline{ Samford University,}
 \centerline{ Birmingham, AL 35229, USA.}
}

\vspace{.7cm}
\centerline{ *Corresponding author's email: kantwifo@samford.edu;}
\smallskip
\centerline{ Contributing authors: swestmor@samford.edu; kbearde2@samford.edu;}

\begin{abstract}
This study presents a thorough analysis of an eco-epidemiological model that integrates infectious diseases in prey, prey aggregation, and the {\em dual} fear effect induced by predators. We establish criteria for determining the existence of equilibrium points, which carry substantial biological significance. We establish  the conditions for the occurrence of Hopf, saddle-node, and transcritical bifurcations by employing fear parameters as key bifurcation parameters. Furthermore, through numerical simulations, we demonstrate the occurrence of multiple zero-Hopf (ZH) and saddle-node transcritical (SNTC) bifurcations around the endemic steady states by varying specific key parameters across the two-parametric plane. We demonstrate that the introduction of predator-induced fear, which hinders the growth rate of susceptible prey, can lead to the finite time extinction of an initially stable susceptible prey population. Finally, we discuss management strategies aimed at regulating disease transmission, focusing on fear-based interventions and \emph{selective predation} via predator attack rate on infectious prey.  
\end{abstract}

\section{Introduction}
\noindent In ecological systems, prey aggregation represents a remarkable anti-predator strategy, offering protection to prey species against predators. To model this phenomenon, an exponential functional response, $g(S) = d_0 S^r$, with $0 < r < 1$, was introduced by Rosenzweig \cite{R71}, where $S = S(t)$ denotes the population density of the prey, and $d_0$ represents the predation rate. Various studies have explored the implications of prey aggregation in prey-predator interactions, including symbiotic, competitive, and predator-prey models \cite{M18, A11}. For instance, Braza \cite{B12} investigated a predator-prey model utilizing the square root functional response $g(S) = d_0 S^{1/2}$, proposed by Gauss \cite{G34}, indicating a strong herd behavior where predators interact with the prey at the herd's periphery. Additional approaches for modeling prey aggregation (or herd behavior) have been discussed in the literature \cite{B11, V13, V18, Y13, Z21, Y20}, offering a comprehensive perspective on this intriguing ecological phenomenon.

Additionally, the concept of finite time extinction (FTE) has significant implications for ecosystem management. Notably, the functional response $g(S)$, with $r < 1$, exhibits intriguing non-smooth characteristics when $S = 0$, leading to complex dynamics. This functional response enables the prey species to go extinct in finite time, followed by the exponential decay of the predator population to zero in infinite time \cite{APB20}. Non-smooth functional responses, also known as power incidence functions, have been extensively analyzed in susceptible-infective models, where host species may potentially face finite time extinction \cite{F18}. Understanding the dynamics of finite time extinction enhances our ability to manage and conserve natural resources effectively.

Eco-epidemiology is an interdisciplinary field that combines concepts from ecology and epidemiology to study the interactions between infectious diseases and ecological systems. It focuses on understanding how ecological factors influence the transmission and dynamics of infectious diseases, and how disease dynamics, in turn, impact the populations and communities in ecological systems. Kooi and Venturino \cite{KV16} explored an eco-epidemic predator-prey model that showcases prey herd behavior and abandoned infected prey, further contributing to our understanding of these intricate dynamics.  Gupta and Dubey \cite{G22} explored an eco-epidemic model wherein the predator receives additional food. Their findings revealed that the surplus energy obtained from the additional food can potentially result in disease eradication, even under higher infection rates. For further readings on eco-epidemiological models, please see \cite{V16, SS20, D21, SP22} and references therein.

To incorporate the profound influence of fear in predator-prey interactions, several notable studies have contributed to our understanding of this complex phenomenon \cite{Z11, H14, S16}. In 2016, Wang et al. \cite{Wang16} proposed the first mathematical predator-prey model integrating fear effects, particularly focusing on the Holling type II functional response. They intriguingly observed that elevated fear levels can stabilize the predator-prey dynamics, offering insights into the intricate balance between fear-induced behaviors and the functional response. Recently, Antwi-Fordjour et al. \cite{APTW23} made a compelling observation that increasing the strength of fear in a predator-prey model with prey herd behavior and mutual interference could lead to the extinction of the prey population from a coexistence zone in finite time. This finding highlights the critical implications of fear in altering the long-term dynamics of ecological systems. 

Furthermore, Sha et al. \cite{SSJ19} also made noteworthy contributions by investigating an eco-epidemiological model with  disease in the prey species and fear of predators. They observed fear-induced backward bifurcation, bistability, oscillations and the occurrence of chaos, adding depth to the comprehension of predator-prey dynamics. Moreover, Hossain et al. \cite{HPS20} undertook a comprehensive study on the influence of fear in an eco-epidemiological model with a general disease transmission function for which mass action, standard incidence, and saturation laws are specific cases. Their investigation provided valuable insights into how fear can eliminate chaotic oscillations which influence the demographic structure and stability. For further enriching the discourse on the impact of fear in predator-prey dynamics, the following works  offer invaluable perspectives \cite{S22, KA22, VA22}. These collective endeavors contribute to a comprehensive understanding of fear's profound impact on ecological systems, underscoring the need for further exploration and research in this captivating field.

The main aim of this paper is to study the effect of dual fear of predators in an eco-epidemiological model. We assume a split in the prey's population into susceptible and infectious populations. We assume that once the prey is infected with a disease, recovery is impossible. We posit that the susceptible prey population exhibits an anti-predator behavior by aggregating when they encounter predators. Following the work of Shar et al. \cite{SSJ19}, the dual fear effect considered in this paper are:
\begin{itemize}
\item[(i)] fear of predators that reduces the birth rate of the susceptible prey and 
\item[(ii)] fear of predators that lowers the transmission of disease.
\end{itemize}
 We assume the prey population aggregate via the Rosenzweig functional response $g(S)$. The core objectives of our research are to address the following inquiries:
\begin{enumerate}
 \item Do fear parameters acting as key bifurcation parameters induce a plethora of rich dynamical behaviors in the eco-epidemiological model incorporating prey aggregation and a dual fear effect?
\item How does selective predation influence the time evolution of population densities in regulating disease transmission?
\item What ecological factors or predator-prey dynamics lead to the shift from stable endemic state to finite time extinction of the susceptible prey population when a positive value of parameter is introduced?
\end{enumerate}

The present study is organized as follows: In Section \ref{sec:model formulation}, we introduce an eco-epidemiological model with disease and aggregation in the prey population. Section \ref{sec:preliminary results} presents preliminary findings, including nonnegativity and boundedness. In Section \ref{sec:equilibria and stability}, we explore the existence of biologically significant equilibrium points and discuss their stability. We conduct comprehensive co-dimension one bifurcation analysis for our proposed model in Section \ref{sec:bifucation analysis}. Section \ref{sec:FTE} focuses on the finite time extinction of the susceptible prey population, presenting relevant results. In Section \ref{sec:Disease management}, we discuss disease management strategies aimed at controlling the spread of disease. Numerical results are extensively presented in various sections to complement our theoretical findings and support our conjectures. These numerical simulations provide a practical validation of the proposed eco-epidemiological model and offer a visual representation of its rich dynamics. We summarize the conclusions drawn from this research in Section \ref{sec:conclusion}. Furthermore, we explore the practical implications of our findings within ecological and epidemiological contexts, elucidating promising directions for future research in this domain.

\section{The Mathematical Model}\label{sec:model formulation}
\noindent In our ecological model, we make several biologically motivated assumptions regarding the dynamics of the prey and predator populations and the role of fear of predation in disease spread:
\begin{itemize}
\item[(a)] Prey Population Split: The prey population is divided into two distinct classes: susceptible and infectious. This categorization allows us to capture the transmission dynamics of the disease within the prey population accurately.

\item[(b)] Transmission of Infection: Susceptible prey become infected by direct contact with infected prey individuals. We assume that the transmission occurs through mechanisms such as physical contact or other modes of disease transmission that are common in ecological systems.

\item[(c)] Impact of Infection: The infectious prey, once infected, are assumed to be in a weakened state, rendering them unable to reproduce or compete effectively for vital resources. This assumption reflects the reduced fitness and compromised physiological state often observed in infected individuals.

\item[(d)] Fear of Predation: We incorporate the concept of fear of predation into our model. This fear reduces the interactions between infected and susceptible individuals, leading to a significant decrease in the spread of the disease. The presence of predators induces changes in prey behavior, resulting in reduced contact rates and, consequently, diminished disease transmission. Based on experimental evidence, we can assume that fear of predators leads to a decline in the prey's growth rate.

\item[(e)] Irreversibility of Infection: We assume that once a prey individual becomes infected, they cannot recover from the infection. This assumption aligns with certain infectious diseases where recovery is not possible or is highly unlikely within the time frame considered in the model.

\item[(f)] Carrying Capacity: In our proposed model, we incorporate the biological assumption that both the susceptible and infected prey populations contribute to the carrying capacity. This means that the total capacity of the ecosystem to support prey is determined not only by the size of the susceptible population but also by the presence of the infected individuals.

\item[(g)] Predator-Prey Dynamics: We assume the following interactions in our proposed model:
\begin{itemize}
\item[(i)] Logistic Growth: In the absence of predation, the susceptible prey population grows logistically.

\item[(ii)] Prey Aggregation: We assume the prey population aggregates when they encounter predators to form a defense mechanism. This is modeled by $S^r$, where $r\in (0,1)$ is the aggregating constant.

\item[(iii)] Mass Action (or Holling type I): The interaction between infected prey and predator populations is characterized by the ecological principle known as mass action.  With this type of functional response, the consumption rate of the predator increases linearly with an increase in the density of its prey up to a certain point and then begin to saturate since the maximum rate of consumption has been attained by the predator. Also, the interaction between the susceptible prey and infected prey is modeled via a modified mass action. 
\item[(iv)] Natural Death Rates: We incorporate natural death rates for susceptible prey, infected prey, and predators, capturing the inherent mortality in ecological systems. This accounts for population turnover and enables a realistic representation of ecological processes, birth rates, disease transmission, and predation dynamics.
\end{itemize}
\end{itemize}
These assumptions provide a biologically plausible framework for studying the eco-epidemiological dynamics of the prey population, taking into account disease transmission, predator-prey interactions, and the role of fear of predation in the spread of the disease.

Let $f(k_i,P)=\frac{1}{1+k_iP}$ be the fear function where $i=$1 or 2. This decreasing function  was proposed by Wang et al. \cite{Wang16} . The fear function $f(k_i,P)$ possesses several biological properties that characterize its behavior within the context of the model:
\begin{itemize}
\item[(i)] In the absence of fear ($k_i=0$), the fear function yields:
\begin{equation*}
f(0,P) = \dfrac{1}{1+0\cdot P}=1.
\end{equation*}
This implies that when there is no fear, the interaction between individuals is not influenced, and the fear function does not affect the system.

\item[(ii)] When there are no predators ($P=0$), the fear function becomes:
\begin{equation*}
f(k_i,0)= \dfrac{1}{1+k_i\cdot 0}=1.
\end{equation*}
This indicates that in the absence of predators, the fear function does not alter the interaction dynamics between individuals.

\item[(iii)]  As the fear level increases ($k_i\rightarrow \infty$), the fear function approaches the limit:
\begin{equation*}
\lim\limits_{k_i\rightarrow \infty}f(k_i,P)= \lim\limits_{k_i\rightarrow \infty} \dfrac{1}{1+k_iP}=0.
\end{equation*}
This signifies that when fear is exceedingly high, it leads to a significant reduction in the interaction among individuals, ultimately suppressing the birth rate and disease spread.

\item[(iv)]  When the predator population is abundant ($P\rightarrow \infty$), the fear function tends to zero, causing the birthrate or disease transmission rate to decline:
\begin{equation*}
\lim\limits_{P\rightarrow \infty}f(k_i,P)= \lim\limits_{P\rightarrow \infty} \dfrac{1}{1+k_iP}=0.
\end{equation*}
This implies that in the presence of a large predator population, the fear function approaches zero, resulting in a collapse of the birthrate or disease transmission rate.
\end{itemize}
These properties of the fear function capture various biological and epidemiological aspects of the model, such as the impact of fear levels, predator presence, and population dynamics on the transmission and spread of the disease.

We develop a susceptible-infectious-predator ($SIP$) model, representing the dynamics of three populations: susceptible prey ($S$), infectious prey ($I$), and predators ($P$). In this model, we consider the following relationships:

\begin{align}
\nonumber \frac{dS}{dt} &= \frac{b_0S}{1 + k_1P}\left(1-\frac{S+I}{K}\right) - a_0S - d_0S^rP - \frac{e_0SI}{1 + k_2P}=G_1(S,I,P) \\
   \frac{dI}{dt} &= -a_1I + \frac{e_0SI}{1+k_2P} - d_1IP =G_2(S,I,P)  \label{Mainsystem}  \\
\nonumber     \frac{dP}{dt} &= -a_2P + d_2S^rP + d_3IP=G_3(S,I,P)
\end{align}\\

\begin{table}[htbp]
    \centering
 \caption{Biological description of parameters from model \eqref{Mainsystem}. All parameters are assumed to be positive.}
    \label{tab:description}    
\begin{tabular}{| c | l |}
 \hline 
 Parameters & Biological Description \\
 \hline
 $b_0$ & Natural birth rate \\  
 $r$ & Aggregating constant\\
 $e_0$ & Disease transmission rate\\
 $K$ & Carrying capacity \\
 $a_0$ & Susceptible prey natural death rate \\
 $a_1$ & Infectious prey natural death rate \\
 $a_2$ & Predator natural death rate \\
 $d_0$ & Attack rate of the predator on the susceptible prey\\
 $d_1$ & Attack rate of the predator on infectious prey\\
 $d_2$ & Biomass conversion efficiency from susceptible prey to predator \\
 $d_3$ & Biomass conversion efficiency from infected prey to predator \\
 $k_1$ & Level of fear that suppresses growth rate of susceptible prey \\
 $k_2$ & Level of fear that reduces disease transmission \\
 \hline
\end{tabular}
\end{table}

\noindent The biological description of parameters used in model \eqref{Mainsystem} are listed in Table \ref{tab:description}.  To the best of our knowledge, this is the first eco-epidemiological model to consider dual fear effect and susceptible prey aggregation via Rosenzweig functional response function (i.e., $S^r$) as depicted in model \eqref{Mainsystem}. In the next section, we shall investigate the well-behavedness of model \eqref{Mainsystem}.

\section{Preliminary Results}\label{sec:preliminary results}
\noindent To ensure the validity of model \eqref{Mainsystem}, we will demonstrate its nonnegativity, ensuring that all populations involved have nonnegative values, which aligns with biological feasibility. Additionally, we will establish the boundedness of the model, indicating that each population remains within a finite upper limit, further supporting its biological realism.
\subsection{Nonnegativity}
\begin{theorem}\label{thm:nonnegativity}
All solutions $(S(t), I(t), P(t))$ of the model \eqref{Mainsystem} are nonnegative for all $t\geq 0$.
\end{theorem}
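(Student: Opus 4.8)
The plan is to show that the closed positive octant $\{S\ge 0,\ I\ge 0,\ P\ge 0\}$ is positively invariant, which is the natural state space since model \eqref{Mainsystem} is only defined where $S\ge 0$ (the aggregation term $S^r$ with $r\in(0,1)$ requires a nonnegative base). On the open region $\{S>0\}$ the right-hand sides $G_1,G_2,G_3$ are locally Lipschitz, so a unique local solution exists on a maximal interval $[0,T_{\max})$; the plan is then to represent each component in a form whose sign is manifest.

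For $I$ and $P$ the argument is immediate because each equation factors through its own variable. Writing $G_2 = I\big(-a_1+\tfrac{e_0 S}{1+k_2P}-d_1P\big)$ and $G_3 = P\big(-a_2+d_2S^r+d_3I\big)$, I would integrate to obtain the exponential representations
\[
I(t)=I(0)\exp\!\Big(\int_0^t\big(-a_1+\tfrac{e_0 S(s)}{1+k_2P(s)}-d_1P(s)\big)\,ds\Big),
\]
\[
P(t)=P(0)\exp\!\Big(\int_0^t\big(-a_2+d_2S(s)^r+d_3I(s)\big)\,ds\Big).
\]
Because the exponential factor is strictly positive regardless of the sign of its exponent, $I(t)$ and $P(t)$ inherit the signs of $I(0)\ge 0$ and $P(0)\ge 0$, and hence remain nonnegative on $[0,T_{\max})$. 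The exponents stay finite as long as $1+k_2P>0$, which holds once $P\ge 0$ is known, so these two identities are read off simultaneously.

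The main obstacle is the $S$ equation, because the term $-d_0S^rP$ does not factor as $S$ times a bounded quantity: dividing by $S$ produces $S^{r-1}\to\infty$ as $S\to 0^+$, so the clean exponential trick fails and the field is non-Lipschitz on $\{S=0\}$. To handle it I would group the remaining terms as $G_1 = A(t)\,S - d_0P\,S^r$, where $A(t)=\tfrac{b_0}{1+k_1P}\big(1-\tfrac{S+I}{K}\big)-a_0-\tfrac{e_0I}{1+k_2P}$ is bounded along the solution, recognizing a Bernoulli equation. The substitution $u=S^{1-r}$ linearizes it to $\dot u=(1-r)A(t)\,u-(1-r)d_0P$, whose solution is explicit. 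Since $u=S^{1-r}$ is by construction nonnegative, this representation shows $S$ cannot cross zero into negative values: it stays strictly positive while $u>0$ and, should the linear solution for $u$ reach $0$ in finite time, $S$ hits $0$ and the trajectory enters the invariant face $\{S=0\}$ (precisely the finite-time-extinction mechanism analyzed later). Equivalently, one checks the tangency condition $G_1\big|_{S=0}=0$, so the vector field is tangent to $\{S=0\}$ and no solution can leave $\{S\ge 0\}$. Assembling the three components, every solution with nonnegative initial data remains in the positive octant throughout $[0,T_{\max})$, and together with the boundedness established below (which rules out finite-time blow-up) this yields nonnegativity for all $t\ge 0$.
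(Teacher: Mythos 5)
Your proposal is correct, and for the $S$-component it takes a genuinely different (and more careful) route than the paper. The paper treats all three equations identically: it divides each by its own variable and writes the exponential representation, including
$S(t)=S(0)\exp\big(\int_0^t[\tfrac{b_0}{1+k_1P}(1-\tfrac{S+I}{K})-a_0-d_0S^{r-1}P-\tfrac{e_0I}{1+k_2P}]\,ds\big)\ge 0$,
whose integrand contains the singular factor $S^{r-1}$ — exactly the obstacle you identify. The paper's formula is shorter and still formally delivers nonnegativity (an exponential is nonnegative wherever the integral is defined, and degenerates to $0$ if the integral diverges to $-\infty$), but it tacitly assumes $S>0$ on the whole interval of integration and says nothing about what happens at or after a zero of $S$. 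Your Bernoulli substitution $u=S^{1-r}$, which linearizes the $S$-equation to $\dot u=(1-r)A(t)u-(1-r)d_0P$, removes the singularity, remains valid up to the first zero of $S$, and makes the tangency $G_1|_{S=0}=0$ do the work of keeping the trajectory in $\{S\ge 0\}$ (the field being undefined for $S<0$ anyway). This is more work, but it is rigorous at the boundary and dovetails with the finite-time-extinction mechanism the paper analyzes in Section \ref{sec:FTE}, whereas the paper's one-line representation quietly glosses over it. For $I$ and $P$ your argument coincides with the paper's. Your closing remark that boundedness is needed to rule out finite-time blow-up and extend to all $t\ge0$ is a point the paper also leaves implicit.
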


\begin{proof}
The right hand side of model \eqref{Mainsystem} is continuous and locally non-smooth function of the dependent variable $t$. We obtain the following after integration.
\begin{align*}
S(t)&=S(0)\text{exp}\left(\displaystyle\int_0^t \Big[\frac{b_0}{1 + k_1P}\left(1-\frac{S+I}{K}\right) - a_0 - d_0S^{r-1}P - \frac{e_0I}{1 + k_2P}\Big] ds\right)\geq 0 \\
I(t)&=I(0)\text{exp}\left(\displaystyle\int_0^t \Big[-a_1 + \frac{e_0S}{1+k_2P} - d_1P\Big]   ds\right)\geq 0\\
P(t)&=P(0)\text{exp}\left(\displaystyle\int_0^t \Big[-a_2 + d_2S^r + d_3I\Big] ds\right)\geq 0
\end{align*}

\noindent Therefore, all solutions initiating from the interior of the first octant remain in it for all future time.
\end{proof}

\subsection{Boundedness}

\begin{theorem}
All the solutions $(S(t), I(t), P(t))$ of the model \eqref{Mainsystem} with positive initial conditions are uniformly bounded if $d_2 < d_0$ and $d_3 < d_1$.
\end{theorem}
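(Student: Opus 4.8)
The plan is to apply the standard comparison/differential-inequality method to the total population $W(t) = S(t) + I(t) + P(t)$. By Theorem \ref{thm:nonnegativity}, all three components are nonnegative, so $W \ge 0$ and each of $S$, $I$, $P$ is dominated by $W$; it therefore suffices to produce a uniform upper bound for $W$.

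First I would differentiate $W$ along trajectories of \eqref{Mainsystem} and add the three right-hand sides. The key algebraic simplification is that the infection terms $\mp\,\frac{e_0 SI}{1+k_2 P}$ appearing in $G_1$ and $G_2$ cancel, while the predation and conversion terms combine into $(d_2 - d_0) S^r P + (d_3 - d_1) IP$. This is exactly where the hypotheses enter: since $d_2 < d_0$ and $d_3 < d_1$, both coefficients are negative, so these terms are nonpositive and may be discarded in an upper estimate, leaving
\[
\frac{dW}{dt} \le \frac{b_0 S}{1+k_1 P}\left(1 - \frac{S+I}{K}\right) - a_0 S - a_1 I - a_2 P.
\]

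Next I would introduce $\mu = \min\{a_0, a_1, a_2\} > 0$ and estimate $\frac{dW}{dt} + \mu W$. After adding $\mu W$, the three linear terms become $-(a_0-\mu)S$, $-(a_1-\mu)I$, and $-(a_2-\mu)P$, each nonpositive, so they can be dropped and only the growth term survives. To control it, I would use $\frac{1}{1+k_1 P} \le 1$ together with the elementary fact that the parabola $b_0 S\left(1 - \frac{S}{K}\right)$ attains its maximum $\frac{b_0 K}{4}$ at $S = K/2$; since the $I$-dependent part of the logistic factor is nonpositive, this gives $\frac{b_0 S}{1+k_1 P}\left(1 - \frac{S+I}{K}\right) \le \frac{b_0 K}{4}$ at every admissible state, and hence the differential inequality $\frac{dW}{dt} + \mu W \le \frac{b_0 K}{4}$.

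Finally, applying the comparison lemma to this linear inequality yields $W(t) \le W(0)\,e^{-\mu t} + \frac{b_0 K}{4\mu}\left(1 - e^{-\mu t}\right)$, so that $W(t) \le \max\!\left\{W(0), \tfrac{b_0 K}{4\mu}\right\}$ for all $t \ge 0$ and $\limsup_{t\to\infty} W(t) \le \frac{b_0 K}{4\mu}$; since $0 \le S, I, P \le W$, all three populations are uniformly bounded. The main obstacle I anticipate is the sign bookkeeping in the growth-term estimate: the naive termwise bound $\frac{1}{1+k_1 P}\left(1 - \frac{S}{K}\right) \le 1 - \frac{S}{K}$ reverses direction when $S > K$, so I would bound the growth term directly through its global maximum rather than factor by factor, and I would keep the hypotheses $d_2 < d_0$ and $d_3 < d_1$ in plain view, since they are precisely what renders the conversion terms dissipative.
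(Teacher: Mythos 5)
Your proposal is correct and follows essentially the same route as the paper: differentiate $S+I+P$, use $d_2<d_0$ and $d_3<d_1$ to discard the predation/conversion terms, and close with a linear differential inequality $\frac{dW}{dt}+\mu W\le \text{const}$ and the comparison lemma. Your one deviation --- bounding the growth term by its global maximum $\frac{b_0K}{4}$ instead of the paper's factor-by-factor estimate $\frac{1}{1+k_1P}\left(1-\frac{S+I}{K}\right)\le 1-\frac{S}{K}$ (which indeed reverses when $S>K$) --- is a small but genuine tightening of the argument.
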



\begin{proof}
Let us define the function $Q\left(S(t),I(t),P(t)\right) = S(t) + I(t) + P(t)$. Then
\begin{align*}
    \dfrac{dQ}{dt} &= \dfrac{dS}{dt} + \dfrac{dI}{dt} + \dfrac{dP}{dt}\\
    &= \dfrac{b_0S}{1 + k_1P}\left(1 - \dfrac{S + I}{K}\right) - a_0S - d_0S^rP - \dfrac{e_0SI}{1+k_2P} -a_1I + \dfrac{e_0SI}{1+k_2P} - d_1IP\\
    & -a_2P + d_2S^rP + d_3IP\\
    &=\frac{b_0S}{1 + k_1P}\left(1 - \frac{S + I}{K}\right) - a_0S - S^rP(d_0 - d_2)  -a_1I - IP(d_1 - d_3) -a_2P\\\end{align*}
  We assume that $d_2 < d_0$ and $d_3 < d_1$, we obtain the following inequality, 

\begin{align*}
   \frac{dQ}{dt} &\leq b_0S\left(1 - \frac{S}{K}\right) - a_0S -a_1I -a_2P
\end{align*}
Let $\xi$ be a positive constant, then
\begin{align*}
   \frac{dQ}{dt} + \xi Q &\leq b_0S\left(1 - \frac{S}{K}\right) - a_0S -a_1I -a_2P + \xi(S + I + P) \\
   &\leq S\left(b_0\left(1 - \frac{S}{K}\right) - a_0 + \xi\right) -I(a_1 - \xi) -P(a_2 - \xi)
\end{align*}
 We assume $\xi \leq \min \{a_1, a_2\}$, thus we obtain
\begin{align*}
    \frac{dQ}{dt} + \xi Q  &\leq b_0S\left(\left(1 - \frac{S}{K}\right) - \frac{a_0 - \xi}{b_0}\right) \\
    &= b_0\left(\left(1-\frac{a_0-\xi}{b_0}\right)S-\frac{S^2}{K}\right)\\
\end{align*}
We assume $\xi>a_0-b_0$, then
\begin{align*}
 \frac{dQ}{dt} + \xi Q & \leq \frac{b_0K}{4}\left(1-\frac{a_0-\xi}{b_0}\right)^2
\end{align*}
Taking $W=\frac{b_0K}{4}\left(1-\frac{a_0-\xi}{b_0}\right)^2>0$ and applying the theorem on differential inequality, we get
\[0\leq Q(S(t),I(t),P(t))\leq \frac{W(1-e^{-\xi t})}{\xi}+Q(S(0),I(0),P(0))e^{-\xi t},\]
which implies
\[\limsup_{t\rightarrow \infty}Q(S(t),I(t),P(t))\leq \frac{W}{\xi}\]

\noindent Hence, all the solutions of model \eqref{Mainsystem} which initiated in $\mathbb{R}_+^3$ are confined in 
\begin{equation*}
    \Gamma = \{(S, I, P) \in \mathbb{R}^3_+ : Q(S(t), I(t), P(t)) \leq \frac{W}{\xi} + \epsilon, \epsilon \in \mathbb{R}\}.
\end{equation*}

\end{proof}

\section{Equilibria and their Local Stability}\label{sec:equilibria and stability}
\noindent In this Section, we delve into the examination of biologically significant equilibrium points within model \eqref{Mainsystem}. We analyze their existence and further explore their stability properties. This analysis provides valuable insights into the dynamics of the system and sheds light on its long-term behavior.

\subsection{Biologically Significant Equilibrium Points}
The nonnegative equilibrium points of the model \eqref{Mainsystem} can be computed by solving the system
\begin{align}
   \label{susc=0} 0 &= S \left(\frac{b_0}{1 + k_1P}\left(1-\frac{S+I}{K}\right) - a_0 - d_0S^{r-1}P - \frac{e_0I}{1 + k_2P}\right) \\
    \label{inf=0} 0 &= I\left(-a_1 + \frac{e_0S}{1+k_2P} - d_1P\right) \\
   \label{pred=0} 0 &= P(-a_2 + d_2S^r + d_3I)
\end{align}
in $\mathbb{R}^3_+$. The biologically significant nonnegative equilibria are
\begin{itemize}
\item[(i)] Extinction equilibrium $E_0=(0,0,0)$.

\item[(ii)] Susceptible prey only $E_1=(S_1,0,0)$, where $S_1=K(1-\frac{a_0}{b_0})$. $E_1$ is feasible for $b_0>a_0$. 

\item[(iii)] Predator free $E_2=(S_2,I_2,0)$, where $S_2=\frac{a_1}{e_0}$ and $I_2= \frac{e_0 K \left(b_0-a_0\right)-a_1 b_0}{e_0 \left(b_0+e_0 K\right)}$. $E_2$ is biologically significant provided $a_0<b_0\left(1-\frac{a_1}{e_0 K}\right)$.

\item[(iv)] Infectious prey free $E_3=(S_3,0,P_3)$, where
\begin{equation}\label{cond:s3}
S_3=\left(\frac{a_2}{d_2}\right)^{1/r}
\end{equation}
 and $P_3$ is the positive root(s) of 
\begin{equation*}
 h_1 P^2+ h_2 P + h_3=0,
\end{equation*}
where
\[h_1= \left(-k_1d_0\left(\left(\frac{a_2}{d_2}\right)^{1/r}\right)^{r-1}\right)\]
\[h_2=\left(-d_0\left(\left(\frac{a_2}{d_2}\right)^{1/r}\right)^{r-1}-a_0k_1\right)\]
\[h_3=\left(b_0\left(1-\frac{1}{K}\left(\frac{a_2}{d_2}\right)^{1/r}\right) - a_0\right)\]
and
\begin{equation}\label{cond:P3}
P_3=\dfrac{-h_2+\sqrt{h^2_2-4h_1h_3}}{2h_1}=\frac{\frac{\left(\left(\frac{a_2}{d_2}\right){}^{1/r}\right){}^{-r} \left(\sqrt{B}-a_0 K k_1 \left(\frac{a_2}{d_2}\right){}^{1/r}\right)}{d_0 K}-1}{2 k_1}.
\end{equation}
 Also, \\
\noindent $B = K \left(C+D\right),$\\
where\\
$C=-2 d_0 k_1 \left(\left(\frac{a_2}{d_2}\right){}^{1/r}\right){}^{r+1} \left(2 b_0 \left(\left(\frac{a_2}{d_2}\right){}^{1/r}-K\right)+a_0 K\right)$,\\
$D=a_0^2 K k_1^2 \left(\frac{a_2}{d_2}\right){}^{2/r}+d_0^2 K \left(\left(\frac{a_2}{d_2}\right){}^{1/r}\right){}^{2 r}$.

\item[(v)] Coexistence $E_4=(S^*,I^*,P^*)$. 
The coexistence (or interior or endemic) equilibrium point(s) can be obtained from the nullclines (or isoclines) equation below:
\begin{align}
   \label{susc=S*} 0 &= \frac{b_0}{1 + k_1P}\left(1-\frac{S+I}{K}\right) - a_0 - d_0S^{r-1}P - \frac{e_0I}{1 + k_2P} \\
    \label{inf=I*} 0 &= -a_1 + \frac{e_0S}{1+k_2P} - d_1P \\
   \label{pred=P*} 0 &= -a_2 + d_2S^r + d_3I
\end{align}
\end{itemize}
where $S^*=\left(\frac{a_2-d_3I^*}{d_2} \right)^{1/r}$. $P^*$ is the positive root(s) of 
\[w_1P^2+w_2P+w_3=0\]
where
$w_1=d_1k_2,~w_2=d_1+a_1k_2,~w_3=a_1-\left(\dfrac{a_2-d_3I^*}{d_2}\right)^{1/r}$,
and $P^*=\frac{-w_2+\sqrt{w^2_2 - 4w_1w_3}}{2w_1}$.
Also, $I^*=-\frac{\left(k_2 P^*+1\right) \left(a_0 K S^* \left(k_1 P^*+1\right)+b_0 S^* (S^*-K)+d_0 K P^* \left(k_1 P^*+1\right) S^{*r}\right)}{S^* \left(b_0 \left(k_2 P^*+1\right)+e_0 K \left(k_1 P^*+1\right)\right)}$.

\subsection{Local Stability Analysis}
The Jacobian matrix for model \eqref{Mainsystem} at any equilibrium point can be expressed in the following form, which captures the interdependencies and interactions among the variables:
\[
\mathbb{J}=
\begin{bmatrix}
 J_{11}& 
     J_{12} & 
    J_{13}\\[1ex] 
  \ J_{21} & 
     J_{22} & 
     J_{23} \\[1ex]
  J_{31} & 
    J_{32} & 
     J_{33}
\end{bmatrix}
\]
where
\begin{align*}
   J_{11} &=\frac{b_0(K-2S-I)}{K(1 + k_1P)}-a_0 -rd_0S^{r-1}P - \frac{e_0I}{1+k_2P}\\
     J_{12}&= -\frac{b_0S}{K(1+k_1P)}-\frac{e_0S}{1+k_2P}<0\\
     J_{13} &= \frac{k_1b_0S\left(-K+S+I\right) }{K(1+k_1P)^2}-d_0S^r+\frac{k_2e_0SI}{(1+k_2P)^2}\\  
   J_{21} &= \frac{e_0I}{1+k_2P}>0\\
    J_{22} &= \frac{e_0S}{1+k_2P}-a_1 -d_1P\\
    J_{23} &=-\frac{k_2e_0SI}{(1+k_2P)^2}-d_1I<0\\
   J_{31} &= rd_2S^{r-1}P>0\\
    J_{32}&= d_3P>0\\
   J_{33} &= -a_2 +d_2S^r+d_3I
\end{align*}

\begin{remark}\label{remark: E_0}
The extinction equilibrium point $E_0$ cannot be studied with the method of linearization via the Jacobian matrix above due to the singularity caused by terms defined in $J_{11}$ and $J_{31}$ at the origin. This leads to non-uniqueness of solutions in backward time. We will omit the study of the extinction equilibrium point in this research. 
\end{remark}

The subsequent theorems demonstrate the local stability of the equilibrium points (i.e. susceptible prey only, predator free, infectious prey free, and coexistence), providing insights into their stability properties.

\begin{theorem}
The locally asymptotic stability of the equilibrium point $E_1$, representing only susceptible prey, is established under the conditions: $a_0 < b_0$, $a_1 > e_0K\left(1-\frac{a_0}{b_0}\right)$, and $a_2 > d_2 \left(K-\frac{a_0 K}{b_0}\right)^r$. These conditions ensure that the susceptible prey population remains stable in the absence of infectious prey and predators.
\end{theorem}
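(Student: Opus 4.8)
The plan is to apply the standard linearization criterion for hyperbolic equilibria: evaluate the Jacobian $\mathbb{J}$ from the previous subsection at $E_1=(S_1,0,0)$ with $S_1=K(1-\frac{a_0}{b_0})$, and show that all of its eigenvalues have negative real part. Note first that $S_1>0$ whenever $a_0<b_0$, so the right-hand side of \eqref{Mainsystem} is smooth in a neighborhood of $E_1$ (the only non-smoothness of $S^r$ occurs at $S=0$); hence linearization is legitimate here and the obstruction described in Remark \ref{remark: E_0} does not arise.

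First I would substitute $I=0$ and $P=0$ into the entries $J_{ij}$. The key structural observation is that $J_{21}$, $J_{23}$, $J_{31}$, and $J_{32}$ each carry an explicit factor of $I$ or $P$ and therefore vanish at $E_1$. This makes $\mathbb{J}(E_1)$ upper triangular, so its eigenvalues are exactly the diagonal entries $J_{11}$, $J_{22}$, $J_{33}$ evaluated at $E_1$, and no characteristic polynomial or Routh--Hurwitz test is needed.

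Next I would simplify the three diagonal entries. For $J_{11}$, setting $P=I=0$ gives $J_{11}=\frac{b_0(K-2S_1)}{K}-a_0$, and inserting $S_1=K(1-\frac{a_0}{b_0})$ yields $K-2S_1=K(\frac{2a_0}{b_0}-1)$, so $J_{11}$ collapses to $\lambda_1=a_0-b_0$, which is negative precisely when $a_0<b_0$. For $J_{22}$ I obtain $\lambda_2=e_0S_1-a_1=e_0K(1-\frac{a_0}{b_0})-a_1$, negative exactly under the hypothesis $a_1>e_0K(1-\frac{a_0}{b_0})$. For $J_{33}$ I obtain $\lambda_3=d_2S_1^{\,r}-a_2=d_2(K-\frac{a_0K}{b_0})^r-a_2$, negative exactly under $a_2>d_2(K-\frac{a_0K}{b_0})^r$.

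Finally, since all three eigenvalues are real and strictly negative under the stated conditions, $E_1$ is a hyperbolic sink and therefore locally asymptotically stable by the Hartman--Grobman linearization theorem. The only mildly delicate steps are the algebraic reductions of the diagonal entries --- in particular recognizing that $S_1=K(1-\frac{a_0}{b_0})$ makes $J_{11}$ telescope neatly to $a_0-b_0$ --- but I expect no genuine obstacle, because the triangular structure bypasses the full stability analysis that the other (predator-free, infectious-prey-free, and coexistence) equilibria will require.
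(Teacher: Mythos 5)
Your proposal is correct and follows essentially the same route as the paper: evaluate the Jacobian at $E_1$, observe that the factors of $I$ and $P$ in the sub-diagonal entries make $\mathbb{J}_{E_1}$ upper triangular, and read off the three eigenvalues $a_0-b_0$, $e_0K\left(1-\frac{a_0}{b_0}\right)-a_1$, and $d_2\left(K-\frac{a_0K}{b_0}\right)^r-a_2$ from the diagonal. Your added remark that $S_1>0$ keeps the linearization legitimate despite the non-smoothness of $S^r$ at the origin is a sensible touch the paper leaves implicit.
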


\begin{proof}
The Jacobian matrix evaluated at $E_1$ is given by
\[\mathbb{J}_{E_1}=
\begin{bmatrix}
 a_0-b_0 & 
   \frac{\left(a_0-b_0\right) \left(b_0+e_0 K\right)}{b_0}& 
   \frac{a_0 k_1 K \left(a_0-b_0\right)}{b_0}\\[1ex] 
    0 & 
   e_0 K \left(1-\frac{a_0}{b_0}\right)-a_1 & 
    0 \\[1ex]
    0 & 
    0 & 
    d_2 \left(K-\frac{a_0 K}{b_0}\right)^r-a_2
\end{bmatrix}\]

\noindent The eigenvalues obtained from the $\mathbb{J}_{E_1}$ are $\lambda_1= a_0-b_0,~\lambda_2= e_0 K \left(1-\frac{a_0}{b_0}\right)-a_1$ and $\lambda_3= d_2 \left(K-\frac{a_0 K}{b_0}\right)^r-a_2$. The equilibrium point $E_1$ will be locally asymptotically stable if all the eigenvalues are negative or have negative real parts. Thus, the model \eqref{Mainsystem} at $E_1$ is locally asymptotically stable if $a_0<b_0,~ a_1>e_0K\left(1-\frac{a_0}{b_0}\right)$ and $a_2> d_2 \left(K-\frac{a_0 K}{b_0}\right)^r$. The equilibrium point $E_1$ is unstable if at least one of the following three conditions is satisfied:\\
$a_0>b_0,~ a_1<e_0K\left(1-\frac{a_0}{b_0}\right)$ and $a_2< d_2 \left(K-\frac{a_0 K}{b_0}\right)^r$.
\end{proof}

\begin{theorem}\label{thm:stable E_2}
The local asymptotic stability of the predator-free equilibrium $E_2$ is established when the conditions $A_{33} < 0$, $A_{11} < 0$, and $A_{12}A_{21} < 0$ are satisfied. These conditions ensure that the absence of predators leads to a stable equilibrium state, where the predator population remains at zero and the prey populations exhibit stable dynamics.
\end{theorem}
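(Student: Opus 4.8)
The plan is to linearize \eqref{Mainsystem} at $E_2=(S_2,I_2,0)$ and extract the eigenvalues from the resulting Jacobian. First I would evaluate the general entries $J_{ij}$ recorded above at $E_2$; writing $A_{ij}:=J_{ij}\big|_{E_2}$, the substitution $P=0$ collapses both fear functions to $f(k_i,0)=1$ and annihilates every term carrying an explicit factor of $P$. In particular $A_{31}=rd_2S_2^{\,r-1}\cdot 0=0$ and $A_{32}=d_3\cdot 0=0$, so the third row reduces to $(0,\,0,\,A_{33})$ with $A_{33}=-a_2+d_2S_2^{\,r}+d_3I_2$.

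The decisive algebraic fact is $A_{22}=e_0S_2-a_1=0$, which holds because $S_2=a_1/e_0$ from the definition of $E_2$. Consequently $\mathbb{J}_{E_2}$ is block upper triangular with respect to the splitting of coordinates $\{S,I\}$ and $\{P\}$: the leading $2\times2$ block is $\left[\begin{smallmatrix} A_{11}&A_{12}\\ A_{21}&0\end{smallmatrix}\right]$, the trailing block is the scalar $A_{33}$, and the lower-left block vanishes. Expanding $\det(\mathbb{J}_{E_2}-\lambda\mathbb{I})$ along the third row then factors the characteristic polynomial as
\[
(A_{33}-\lambda)\bigl(\lambda^2-A_{11}\lambda-A_{12}A_{21}\bigr)=0,
\]
so one eigenvalue is $\lambda_3=A_{33}$ and the remaining two solve $\lambda^2-A_{11}\lambda-A_{12}A_{21}=0$.

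To conclude I would apply the Routh--Hurwitz criterion to this quadratic: both of its roots lie in the open left half-plane exactly when the trace $A_{11}$ is negative and the constant term $-A_{12}A_{21}$ is positive, that is, when $A_{11}<0$ and $A_{12}A_{21}<0$. Together with $\lambda_3=A_{33}<0$, these are precisely the three hypotheses, which therefore force every eigenvalue to have negative real part and establish the local asymptotic stability of $E_2$.

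There is no serious analytic obstacle here; the work is essentially bookkeeping, and the only points requiring care are the vanishing of $A_{22}$ (which is what produces a quadratic with zero diagonal entry and hence the clean determinant condition $A_{12}A_{21}<0$) and the block-triangular structure forced by $A_{31}=A_{32}=0$. I would also remark that the determinant condition is in fact automatic once $E_2$ is feasible: the sign patterns already recorded in the general Jacobian give $A_{12}<0$ and $A_{21}=e_0I_2>0$, so $A_{12}A_{21}<0$ whenever $I_2>0$. If explicit parameter inequalities were wanted, it would remain only to substitute $S_2=a_1/e_0$ and the expression for $I_2$ into $A_{11}$ and $A_{33}$ and simplify, which is routine.
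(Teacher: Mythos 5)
Your proposal is correct and follows essentially the same route as the paper: evaluate the Jacobian at $E_2$, use $A_{31}=A_{32}=0$ and $A_{22}=0$ (from $S_2=a_1/e_0$) to factor the characteristic polynomial as $(A_{33}-\lambda)(\lambda^2-A_{11}\lambda-A_{12}A_{21})=0$, and read off the stability conditions from the trace and product of the quadratic's roots. Your added observation that $A_{12}A_{21}<0$ holds automatically whenever $I_2>0$ is a correct refinement the paper does not make explicit, but it does not change the argument.
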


\begin{proof}
The Jacobian matrix evaluated at $E_2$ is given by
\[\mathbb{J}_{E_2}=
\begin{bmatrix}
A_{11} & 
   A_{12}& 
   A_{13}\\[1ex] 
   A_{21} & 
   0 & 
  A_{23}\\[1ex]
    0 & 
    0 & 
    A_{33}
\end{bmatrix}\]
where
\begin{align*}
A_{11}&=-\dfrac{a_1b_0}{e_0K} \\
A_{12}&=-a_1 \left(\frac{b_0}{e_0 K}+1\right) \\
A_{13}&=\frac{a_1 \left(b_0 \left(k_1-k_2\right) \left(a_1-e_0 K\right)-a_0 \left(b_0 k_1+e_0 k_2 K\right)\right)}{e_0 \left(b_0+e_0 K\right)}-d_0 \left(\frac{a_1}{e_0}\right)^r\\
A_{21}&= \frac{e_0 K \left(b_0-a_0\right)-a_1 b_0}{\left(b_0+e_0 K\right)}\\
A_{23}&=  -\frac{\left(a_1 k_2-d_1\right) \left(e_0 K \left(a_0-b_0\right)+a_1 b_0\right)}{e_0 \left(b_0+e_0 K\right)} \\
A_{33}&=\frac{d_3 \left(e_0 K \left(b_0-a_0\right)-a_1 b_0\right)}{e_0 \left(b_0+e_0 K\right)}+d_2 \left(\frac{a_1}{e_0}\right)^r-a_2
\end{align*}
The eigenvalues of the Jacobian matrix at $E_2$ are $\lambda_1=A_{33}$ and the roots of the characteristic polynomial given by
\begin{equation*}
\lambda^2-A_{11}\lambda-A_{12}A_{21}=0.
\end{equation*}
For this characteristic polynomial, the roots are $\lambda_2+\lambda_3=A_{11}$ and  $\lambda_2\lambda_3=-A_{12}A_{21}$. Hence, $E_2$ is locally asymptotically stable if $A_{33}<0,~A_{11}<0$, and $A_{12}A_{21}<0$.
\end{proof}

\begin{theorem}\label{thm:Stable E_3}
The local asymptotic stability of the infectious prey-free equilibrium $E_3$ is established when the conditions $B_{22} < 0$, $B_{11} + B_{33} < 0$, and $B_{11}B_{33} - B_{13}B_{31} > 0$ are satisfied. These conditions ensure that in the absence of infectious prey, the equilibrium state is stable, with both the susceptible prey and predator populations exhibiting stable dynamics.
\end{theorem}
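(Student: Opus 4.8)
The plan is to follow the same linearization strategy used for the predator-free equilibrium in Theorem \ref{thm:stable E_2}, exploiting the structural simplification that comes from $I=0$ at the infectious-prey-free equilibrium $E_3=(S_3,0,P_3)$. First I would evaluate the Jacobian $\mathbb{J}$ at $E_3$. Because $I=0$, the entries $J_{21}=\frac{e_0I}{1+k_2P}$ and $J_{23}=-\frac{k_2e_0SI}{(1+k_2P)^2}-d_1I$ both vanish, so the Jacobian collapses to
\[
\mathbb{J}_{E_3}=
\begin{bmatrix}
B_{11} & B_{12} & B_{13}\\
0 & B_{22} & 0\\
B_{31} & B_{32} & B_{33}
\end{bmatrix},
\]
where $B_{11},B_{12},B_{13}$ are $J_{11},J_{12},J_{13}$ evaluated at $(S_3,0,P_3)$, while $B_{22}=\frac{e_0S_3}{1+k_2P_3}-a_1-d_1P_3$, $B_{31}=rd_2S_3^{r-1}P_3$, $B_{32}=d_3P_3$, and $B_{33}=-a_2+d_2S_3^r$. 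I would note that using $S_3=(a_2/d_2)^{1/r}$ gives $d_2S_3^r=a_2$, so in fact $B_{33}=0$; nevertheless I would keep $B_{33}$ symbolic in order to state the stability criterion uniformly.

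Next I would compute the characteristic polynomial. Expanding $\det(\mathbb{J}_{E_3}-\lambda\mathbb{I})$ along the middle row, whose only nonzero entry is $B_{22}-\lambda$, factors the cubic as
\[
(B_{22}-\lambda)\big[(B_{11}-\lambda)(B_{33}-\lambda)-B_{13}B_{31}\big]=0.
\]
Hence one eigenvalue is $\lambda_1=B_{22}$, and the remaining two are the roots of the quadratic
\[
\lambda^2-(B_{11}+B_{33})\lambda+(B_{11}B_{33}-B_{13}B_{31})=0.
\]
By Vi\`ete's formulas the sum of these two roots equals $B_{11}+B_{33}$ and their product equals $B_{11}B_{33}-B_{13}B_{31}$, so the Routh--Hurwitz condition for a quadratic guarantees both roots have negative real part exactly when $B_{11}+B_{33}<0$ and $B_{11}B_{33}-B_{13}B_{31}>0$. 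Combining this with $\lambda_1=B_{22}<0$ forces all three eigenvalues to have negative real part, and therefore $E_3$ is locally asymptotically stable under the three stated hypotheses.

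The bulk of the argument is thus linear algebra essentially identical to the $E_2$ case, and I expect no conceptual obstacle: since $S_3=(a_2/d_2)^{1/r}>0$, the Jacobian is evaluated away from the non-smooth point $S=0$ flagged in Remark \ref{remark: E_0}, so linearization is legitimate here. The only genuinely tedious part will be writing $B_{11}$ and $B_{13}$ explicitly, since these require substituting $S_3=(a_2/d_2)^{1/r}$ together with the quadratic root $P_3$ from \eqref{cond:P3} into $J_{11}$ and $J_{13}$, yielding bulky parameter-level expressions. However, those explicit forms are not needed to prove the theorem, which is phrased entirely in terms of the signs of the $B_{ij}$; I would therefore record the eigenvalue factorization, invoke Routh--Hurwitz, and leave the parameter-level expansion of the $B_{ij}$ to an accompanying remark.
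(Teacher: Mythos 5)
Your proposal is correct and follows essentially the same route as the paper: evaluate the Jacobian at $E_3$, use $I=0$ to zero out the $(2,1)$ and $(2,3)$ entries so the characteristic polynomial factors into $(B_{22}-\lambda)$ times the quadratic $\lambda^2-(B_{11}+B_{33})\lambda+(B_{11}B_{33}-B_{13}B_{31})$, and apply the Routh--Hurwitz/Vi\`ete sign conditions. Your additional observation that $B_{33}=-a_2+d_2S_3^r=0$ by the definition of $S_3$ is accurate and slightly sharper than what the paper records, but does not change the argument.
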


\begin{proof}
The Jacobian matrix evaluated at $E_3$ is given by
\begin{equation}\label{JacE3}
\mathbb{J}_{E_3}=
\begin{bmatrix}
 B_{11}& 
    B_{12}
     & B_{13}\\[1ex] 
  0 & 
  B_{22} & 
    0\\[1ex]
B_{31} & 
   B_{32} & 
   B_{33}
\end{bmatrix}
\end{equation}
where
\begin{align*}
B_{11}&=\frac{b_0(K-2S_3)}{K(1+k_1P_3)} -a_0 -rd_0S_3^{r-1}P_3\\
B_{12}&=-\frac{b_0S_3}{K(1+k_1P_3)}-\frac{e_0S_3}{1+k_2P_3}\\
B_{13}&=\frac{k_1b_0S_3\left(-K+ S_3\right) }{K(1+k_1P_3)^2}-d_0S_3^r \\
B_{22}&=  \frac{e_0S_3}{1+k_2P_3}-a_1 -d_1P_3 \\
B_{31}&=rd_2S_3^{r-1}P_3\\
B_{32}&= d_3P_3\\
B_{33}&= -a_2 +d_2S_3^r
\end{align*}
 Also, $S_3$ and $P_3$  are given in \eqref{cond:s3} and \eqref{cond:P3} respectively. The eigenvalues of the Jacobian matrix at $E_3$ are $\lambda_1=B_{22}$ and the roots of the characteristic polynomial given by
 \begin{equation}\label{charac:s3}
 \lambda^2-(B_{11}+B_{33})\lambda+B_{11}B_{33}-B_{13}B_{31}=0.
 \end{equation}
For the characteristic polynomial in \eqref{charac:s3}, the roots are $\lambda_2+\lambda_3=B_{11}+B_{33}$ and  $\lambda_2\lambda_3=B_{11}B_{33}-B_{13}B_{31}$. Thus, $E_3$ is locally asymptotically stable if $B_{22}<0,~B_{11}+B_{33}<0$ and $B_{11}B_{33}-B_{13}B_{31}>0$.
\end{proof}

\begin{theorem}\label{thm:coexistence}
The local asymptotic stability of the coexistence equilibrium $E_4$ is determined by the conditions $\Psi_1 > 0$, $\Psi_3 > 0$, and $\Psi_1\Psi_2 > \Psi_3$. These conditions ensure that the equilibrium state, where susceptible prey, infectious prey, and predators coexist, is stable. 
\end{theorem}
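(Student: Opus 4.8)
The plan is to follow the same strategy used for $E_1$, $E_2$, and $E_3$: evaluate the Jacobian $\mathbb{J}$ at the coexistence point $E_4=(S^*,I^*,P^*)$, form its characteristic polynomial, and read off stability from the algebraic signs of the coefficients. The essential difference is that at $E_4$ all three coordinates are strictly positive, so none of the off-diagonal entries $J_{ij}$ vanish and $\mathbb{J}_{E_4}$ is a genuinely full $3\times3$ matrix with no block-triangular structure to exploit. Consequently the characteristic equation is an irreducible cubic
\[
\lambda^3 + \Psi_1\lambda^2 + \Psi_2\lambda + \Psi_3 = 0,
\]
and the claimed conditions $\Psi_1>0$, $\Psi_3>0$, $\Psi_1\Psi_2>\Psi_3$ are precisely the Routh--Hurwitz criteria for every root of this cubic to have negative real part.

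First I would exploit the equilibrium relations \eqref{susc=S*}--\eqref{pred=P*} to collapse the diagonal entries. Substituting \eqref{inf=I*} into $J_{22}$ gives $J_{22}=\frac{e_0S^*}{1+k_2P^*}-a_1-d_1P^*=0$, and \eqref{pred=P*} forces $J_{33}=-a_2+d_2S^{*r}+d_3I^*=0$. The top-left entry reduces, via \eqref{susc=S*}, to $J_{11}=(1-r)d_0S^{*\,r-1}P^*-\frac{b_0S^*}{K(1+k_1P^*)}$. These vanishing diagonal terms considerably shrink the coefficients, so I would set
\[
\Psi_1 = -\Tr(\mathbb{J}_{E_4}) = -J_{11},\qquad
\Psi_2 = \text{(sum of principal }2\times2\text{ minors)},\qquad
\Psi_3 = -\det(\mathbb{J}_{E_4}),
\]
and expand each in terms of the surviving entries $J_{11},J_{12},J_{13},J_{21},J_{23},J_{31},J_{32}$. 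With these coefficients in hand, the conclusion is the Routh--Hurwitz theorem for cubics: $E_4$ is locally asymptotically stable precisely when $\Psi_1>0$, $\Psi_3>0$, and $\Psi_1\Psi_2>\Psi_3$ (the last two jointly forcing $\Psi_2>0$), so that all eigenvalues of $\mathbb{J}_{E_4}$ have negative real part.

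The main obstacle is computational rather than conceptual. Because $\mathbb{J}_{E_4}$ is full, the determinant and the principal-minor sum unfold into lengthy parameter expressions once $S^*$, $I^*$, $P^*$ are replaced by their closed forms from part (v); in particular checking $\Psi_1\Psi_2-\Psi_3>0$ and $\det(\mathbb{J}_{E_4})<0$ in terms of the raw parameters would be unwieldy. This is exactly why the statement is framed through the abstract coefficients $\Psi_i$ rather than explicit parameter inequalities, and I would likewise keep the proof at the level of the $\Psi_i$, deferring their full parametric expansion to the two-parameter numerical bifurcation study in Section~\ref{sec:bifucation analysis}.
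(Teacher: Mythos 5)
Your proposal is correct and follows essentially the same route as the paper: evaluate the Jacobian at $E_4$, use the equilibrium relation \eqref{pred=P*} to kill the $(3,3)$ entry, form the cubic $\lambda^3+\Psi_1\lambda^2+\Psi_2\lambda+\Psi_3=0$, and invoke the Routh--Hurwitz criteria. Your additional observation that \eqref{inf=I*} also forces the $(2,2)$ entry to vanish (so $\Psi_1=-J_{11}$) is a valid simplification that the paper does not exploit, since it retains $C_{22}$ symbolically in its coefficient formulas.
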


\begin{proof}
The Jacobian matrix evaluated at $E_4$ is given by
\[
\mathbb{J}_{E_4}=
\begin{bmatrix}
 C_{11}& 
    C_{12}
     & C_{13}\\[1ex] 
  C_{21} & 
  C_{22} & 
    C_{23}\\[1ex]
C_{31} & 
   C_{32} & 
  0
\end{bmatrix}
\]
where
\begin{align*}
 	C_{11} &=\frac{b_0(K-2S^*-I^*)}{K(1 + k_1P^*)}-a_0 -rd_0S^{*r-1}P^* - \frac{e_0I^*}{1+k_2P^*}\\
     C_{12}&= -\frac{b_0S^*}{K(1+k_1P^*)}-\frac{e_0S^*}{1+k_2P^*}\\
     C_{13} &= \frac{k_1b_0S^*\left(-K+ S^*+I^*\right)}{K(1+k_1P^*)^2} -d_0S^{*r}+\frac{k_2e_0S^*I^*}{(1+k_2P^*)^2}\\  
   C_{21} &= \frac{e_0I^*}{1+k_2P^*}\\
    C_{22} &= \frac{e_0S^*}{1+k_2P^*}-a_1 -d_1P^*\\
    C_{23} &=-\frac{k_2e_0S^*I^*}{(1+k_2P^*)^2}-d_1I^*\\
   C_{31} &= rd_2S^{*r-1}P^*\\
    C_{32}&= d_3P^*\\
   C_{33} &= 0
\end{align*}

\noindent The characteristic equation of the Jacobian matrix $\mathbb{J}_{E_4}$ around the coexistence equilibrium point $E_4$ is given by
\begin{equation}
\lambda^3+\Psi_1\lambda^2+\Psi_2\lambda+\Psi_3=0
\end{equation}

where
\begin{align*}
\Psi_1&=-(C_{11}+C_{22})\\
\Psi_2&= C_{11}C_{22}-C_{23}C_{32}-C_{12}C_{21}-C_{13}C_{31}\\
\Psi_3&=C_{11}C_{23}C_{32}+C_{13}C_{22}C_{31}-C_{12}C_{23}C_{31}-C_{13}C_{21}C_{32}
\end{align*}
The positive values of $\Psi_1$ and $\Psi_3$ indicate the stability of the susceptible prey and predator populations, respectively, while the inequality $\Psi_1\Psi_2 > \Psi_3$ ensures the stability of the infectious prey population. Hence, from Routh–Hurwitz criteria, $E_4$ is locally asymptotically stable.
\end{proof}

\section{Bifurcation Analysis}\label{sec:bifucation analysis}
\subsection{Co-dimension one bifurcation}
Co-dimension one bifurcations are important occurrences within dynamical systems that arise when a particular parameter reaches a critical threshold. These bifurcations bring about fundamental alterations in the behavior of the system and are linked to the creation, merging, or elimination of equilibrium points or periodic orbits.

\subsubsection{Saddle-node bifurcation}
Saddle-node bifurcation occurs when a pair of equilibrium points, one stable and one unstable, collide and annihilate each other as a parameter crosses a critical threshold. During a saddle-node bifurcation, the stability of the equilibria changes abruptly, resulting in the emergence of new dynamical regimes. This type of bifurcation is significant as it can help explain phenomena such as the onset of instability or the existence of multiple stable states.

\begin{theorem}\label{thm:saddle-node}
The system governed by the model \eqref{Mainsystem} exhibits  a saddle-node bifurcation near the point $E_4$ when the parameter $k_1$ surpasses a critical value denoted as $k_1^*$, given that two conditions are met: the determinant of the Jacobian matrix $\mathbb{J}_{E_4}$ is zero ($\det(\mathbb{J}_{E_4})=0$), and the trace of the Jacobian matrix $\mathbb{J}_{E_4}$ is negative ($\Tr(\mathbb{J}_{E_4})<0$).
\end{theorem}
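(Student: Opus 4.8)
The plan is to certify the bifurcation with Sotomayor's theorem, using $k_1$ as the bifurcation parameter and writing the right-hand side of \eqref{Mainsystem} as the vector field $F=(G_1,G_2,G_3)^{T}$. At $k_1=k_1^*$ the hypothesis $\det(\mathbb{J}_{E_4})=0$ produces a zero eigenvalue of the Jacobian, because the constant coefficient of the characteristic polynomial satisfies $\Psi_3=-\det(\mathbb{J}_{E_4})=0$. Hence the characteristic polynomial factors as $\lambda(\lambda^2+\Psi_1\lambda+\Psi_2)=0$, giving one zero eigenvalue together with the roots of $\lambda^2+\Psi_1\lambda+\Psi_2=0$. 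Since $C_{33}=0$ we have $\Psi_1=-(C_{11}+C_{22})=-\Tr(\mathbb{J}_{E_4})$, so the assumption $\Tr(\mathbb{J}_{E_4})<0$ gives $\Psi_1>0$; together with $\Psi_2>0$ (recorded as the accompanying nondegeneracy requirement) the remaining two eigenvalues have negative real parts and are nonzero, so the zero eigenvalue is \emph{simple}. This is the structural fact that makes Sotomayor's machinery applicable.

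First I would compute the right null vector $V$ of $\mathbb{J}_{E_4}$ and the left null vector $W$ of $\mathbb{J}_{E_4}^{T}$, solving $\mathbb{J}_{E_4}V=0$ and $\mathbb{J}_{E_4}^{T}W=0$. Exploiting the structure $C_{33}=0$, both can be written by back-substitution as explicit rational expressions in the entries $C_{ij}$ evaluated at $E_4$. The crucial quantity for the next step is the first component $w_1$ of $W$, which I would confirm is nonzero from its cofactor expression.

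Next I would verify the two Sotomayor nondegeneracy conditions. The parameter-transversality condition is $W^{T}F_{k_1}(E_4,k_1^*)\neq 0$; since only $G_1$ depends on $k_1$,
\[
F_{k_1}(E_4,k_1^*)=\left(-\frac{b_0 S^* P^*}{(1+k_1^* P^*)^2}\left(1-\frac{S^*+I^*}{K}\right),\,0,\,0\right)^{T},
\]
so the condition collapses to $w_1\neq 0$ together with $S^*+I^*\neq K$. The second condition, $W^{T}\!\left[D^2F(E_4,k_1^*)(V,V)\right]\neq 0$, requires assembling the Hessians $H_{G_i}$ of $G_1,G_2,G_3$, forming the quadratic forms $V^{T}H_{G_i}V$, and pairing the resulting vector with $W$. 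Here I would treat the non-smooth aggregation terms $d_0 S^{r}P$ and $d_2 S^{r}P$ carefully, noting that their second derivatives involve $S^{r-2}$ but stay finite at $E_4$ because $S^*>0$.

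The main obstacle is precisely this last verification: the null vectors and the Hessian entries are algebraically unwieldy, so checking $W^{T}[D^2F(V,V)]\neq 0$ in closed form is delicate, and I expect to phrase it as a genericity condition verified at the computed equilibrium (and confirmed numerically for the parameter values used later in the paper). Once both inequalities hold, Sotomayor's theorem applies directly and shows that \eqref{Mainsystem} undergoes a saddle-node bifurcation at $E_4$ as $k_1$ crosses $k_1^*$.
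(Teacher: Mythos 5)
Your proposal follows essentially the same route as the paper: both invoke Sotomayor's theorem at $k_1=k_1^*$, use $\det(\mathbb{J}_{E_4})=0$ to obtain the zero eigenvalue, compute the same parameter-derivative $F_{k_1}=\bigl(-\tfrac{b_0 S^* P^*}{(1+k_1^* P^*)^2}(1-\tfrac{S^*+I^*}{K}),0,0\bigr)^{T}$ for the first transversality condition, and assert the quadratic nondegeneracy condition $W^{T}[D^2F(V,V)]\neq 0$ rather than verifying it in closed form. Your additional remarks --- that simplicity of the zero eigenvalue needs $\Psi_2>0$, and that $1-\tfrac{S^*+I^*}{K}\neq 0$ is in fact automatic (indeed positive) from the nullcline equation at $E_4$ --- are refinements of, not departures from, the paper's argument.
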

\begin{proof}
To establish the restriction for the occurrence of saddle-node bifurcation, we use Sotomayor's theorem \cite{Perko13} at the specific value $k_1 = k^*_1$. At this critical point, we observe that $\det(\mathbb{J}_{E_4}) = 0$ and $\Tr(\mathbb{J}_{E_4}) < 0$, indicating the presence of a zero eigenvalue within the Jacobian matrix $\mathbb{J}_{E_4}$. Let us denote the eigenvectors corresponding to this zero eigenvalue as $X$ and $Y$, associated with the matrices $\mathbb{J}_{E_4}$ and $\mathbb{J}^T_{E_4}$, respectively. As a result, we deduce that
                         $$X = (\bar{x}_1, \bar{x}_2, \bar{x}_3)^T$$ 
and
		                 $$Y = (\bar{y}_1, \bar{y}_2, \bar{y}_3)^T,$$ 
where $\bar{x}_1\neq 0$, $\bar{x}_2\not =0$, $\bar{x}_3=1$, $\bar{y}_1\not=0$, $\bar{y}_2\not=0$, and $\bar{y}_3=1$. 

Additionally, let $G = (G_1, G_2, G_3)^T$, where $G_1, G_2$ and $G_3$ are provided in model \eqref{Mainsystem}.
		
		Now
		\begin{align*}
			Y^TG_{k_{1}}(E_4,k^*_1)&=(\bar{y}_1, \bar{y}_2, \bar{y}_3)\Bigg(-\frac{b_0SP}{(1+k_1P)^2}\left(1-\frac{S+I}{K} \right),0,0\Bigg)^T \\
			&=-\frac{b_0\bar{y}_1SP}{(1+k_1P)^2}\left(1-\frac{S+I}{K} \right) \not=0
		\end{align*}
		and
		\begin{equation*}
			Y^T[D^2G(E_4,k^*_1)(X,X)] \not=0 .
		\end{equation*}
Thus, according to Sotomayor's theorem, the model \eqref{Mainsystem} exhibits a saddle-node bifurcation around the point $E_4$ when $k_1$ reaches the critical value $k^*_1$.
\end{proof}

\begin{theorem}\label{thm:saddle-nodek2}
The system governed by the model \eqref{Mainsystem} exhibits a saddle-node bifurcation near the point $E_4$ as the parameter $k_2$ surpasses a critical value denoted as $k_2^*$. This bifurcation occurs on the condition that the determinant of the Jacobian matrix $\mathbb{J}_{E_4}$ is equal to zero ($\det(\mathbb{J}_{E_4})=0$), and the trace of the Jacobian matrix $\mathbb{J}_{E_4}$ is negative ($\Tr(\mathbb{J}_{E_4})<0$).
\end{theorem}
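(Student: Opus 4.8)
The plan is to invoke Sotomayor's theorem \cite{Perko13} and follow the same template as the proof of Theorem~\ref{thm:saddle-node}, now treating $k_2$ as the bifurcation parameter. At $k_2 = k_2^*$ the hypothesis $\det(\mathbb{J}_{E_4}) = 0$ forces a zero eigenvalue of $\mathbb{J}_{E_4}$, while $\Tr(\mathbb{J}_{E_4}) < 0$ confines the non-hyperbolicity to that single direction (the remaining two eigenvalues summing to a negative number). As before, I would introduce the right and left null eigenvectors $X = (\bar{x}_1, \bar{x}_2, \bar{x}_3)^T$ and $Y = (\bar{y}_1, \bar{y}_2, \bar{y}_3)^T$ of $\mathbb{J}_{E_4}$ and $\mathbb{J}_{E_4}^T$, normalized so that $\bar{x}_3 = \bar{y}_3 = 1$ with the remaining entries nonzero, and then check the two Sotomayor transversality conditions $Y^T G_{k_2}(E_4,k_2^*) \neq 0$ and $Y^T[D^2 G(E_4,k_2^*)(X,X)] \neq 0$.

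The only genuinely new ingredient relative to Theorem~\ref{thm:saddle-node} is the parameter derivative $G_{k_2}$. Since $k_2$ enters the model \eqref{Mainsystem} solely through the disease-transmission term $\frac{e_0 SI}{1+k_2 P}$, which appears with opposite signs in $G_1$ and $G_2$ and is absent from $G_3$, I would differentiate to obtain
\[
G_{k_2}(E_4,k_2^*) = \left( \frac{e_0 S^* I^* P^*}{(1+k_2^* P^*)^2},\; -\frac{e_0 S^* I^* P^*}{(1+k_2^* P^*)^2},\; 0 \right)^T.
\]
The first transversality condition then becomes
\[
Y^T G_{k_2}(E_4,k_2^*) = \frac{e_0 S^* I^* P^*}{(1+k_2^* P^*)^2}\,(\bar{y}_1 - \bar{y}_2),
\]
and since $S^*,I^*,P^* > 0$ at a genuine coexistence equilibrium, the scalar prefactor is strictly positive. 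This contrasts sharply with the $k_1$ case, where $G_{k_1}$ had only a first component and the condition reduced to the single requirement $\bar{y}_1 \neq 0$.

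It remains to handle the nondegeneracy condition $Y^T[D^2 G(E_4,k_2^*)(X,X)] \neq 0$, which governs the quadratic tangency. Structurally this is the same Hessian contraction that appears in Theorem~\ref{thm:saddle-node} and is unaffected by which fear parameter is chosen, so I would argue it holds for generic parameter values exactly as before. Once both conditions are verified, Sotomayor's theorem delivers the saddle-node bifurcation at $k_2 = k_2^*$.

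I expect the main obstacle to be precisely the first transversality condition, which here is more delicate than in the $k_1$ setting: because $G_{k_2}$ has \emph{equal and opposite} nonzero entries in its first two slots, the pairing $Y^T G_{k_2}$ can collapse if the left eigenvector happens to satisfy $\bar{y}_1 = \bar{y}_2$, a degeneracy that simply could not arise for $k_1$. Ruling this out rigorously requires extracting the components $\bar{y}_1,\bar{y}_2$ from the null space of $\mathbb{J}_{E_4}^T$ in terms of the complicated entries $C_{ij}$, and confirming $\bar{y}_1 \neq \bar{y}_2$ generically; the accompanying Hessian computation is laborious but mechanical. In the write-up I would therefore state $\bar{y}_1 \neq \bar{y}_2$ as the operative condition guaranteeing transversality, leaving its explicit parametric verification to the generic (non-degenerate) case.
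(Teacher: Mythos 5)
Your proposal follows exactly the route the paper intends: the paper's own ``proof'' of this theorem is a one-line remark that it is readily derived from the proof of Theorem~\ref{thm:saddle-node}, i.e.\ the same Sotomayor argument with $k_2$ in place of $k_1$. Your computation of $G_{k_2}(E_4,k_2^*)$ is correct (the term $\frac{e_0SI}{1+k_2P}$ enters $G_1$ and $G_2$ with opposite signs, giving equal and opposite first two components and a zero third component), and you have put your finger on the one point where ``readily derived'' is an overstatement: the first transversality condition now reads $\frac{e_0S^*I^*P^*}{(1+k_2^*P^*)^2}(\bar{y}_1-\bar{y}_2)\neq 0$, so it can fail when $\bar{y}_1=\bar{y}_2$, a degeneracy with no analogue in the $k_1$ case where $G_{k_1}$ has only a first component. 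Stating $\bar{y}_1\neq\bar{y}_2$ as an explicit genericity hypothesis, as you do, is the honest way to close this; the paper simply omits it (and, for that matter, also asserts rather than verifies the Hessian condition in the $k_1$ proof). Your write-up is therefore not just correct but slightly more complete than the source.
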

\begin{proof}
The proof can be readily derived from the proof of Theorem \ref{thm:saddle-node} and is therefore omitted.
\end{proof}

\begin{remark}\label{remark: SN-k1-k2}
We observed saddle-node bifurcation in Figure \ref{fig:bif:SNk} as the fear parameters (i.e., $k_1$ and $k_2$) are used as bifurcation parameters in the one-parameter planes.
\end{remark}

\begin{figure}[hbt!]
\begin{center}
\subfigure[]{
    \includegraphics[width=6.15cm, height=6cm]{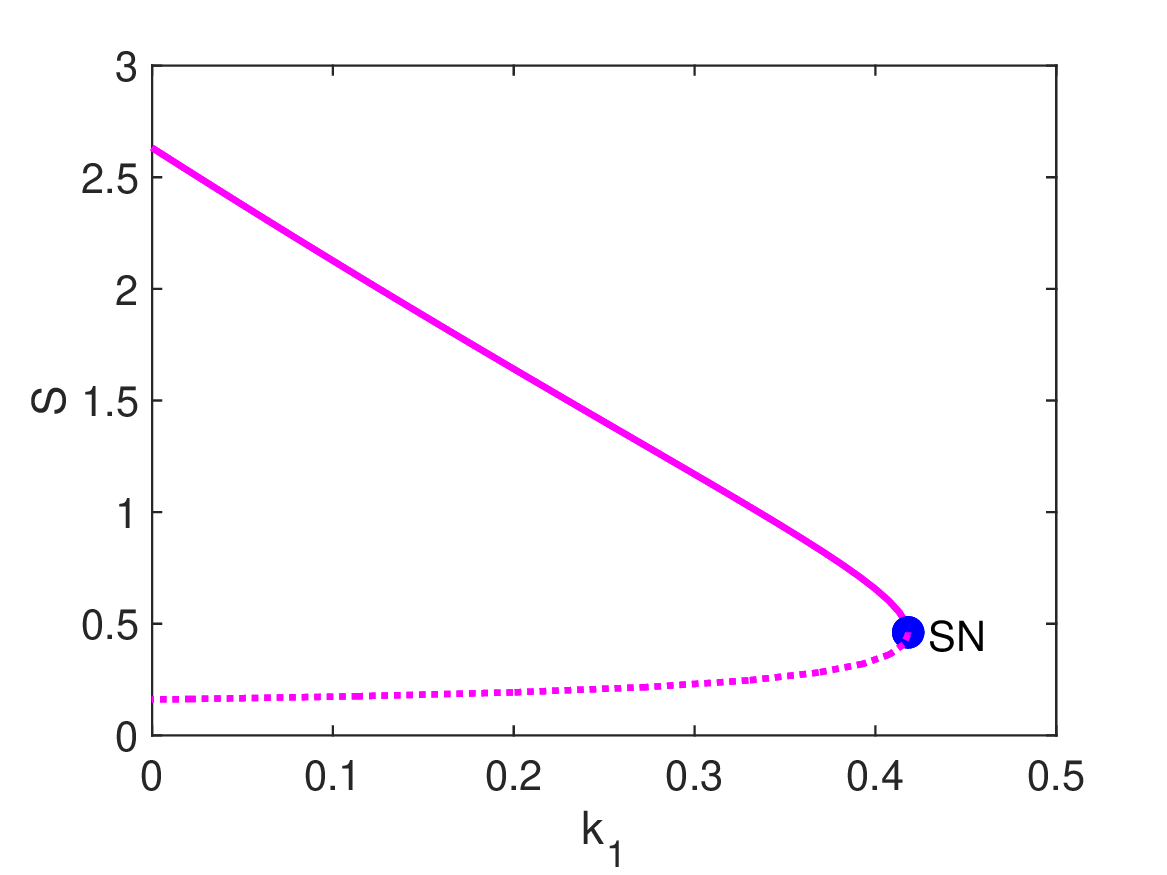}}
\subfigure[]{    
    \includegraphics[width=6.15cm, height=6cm]{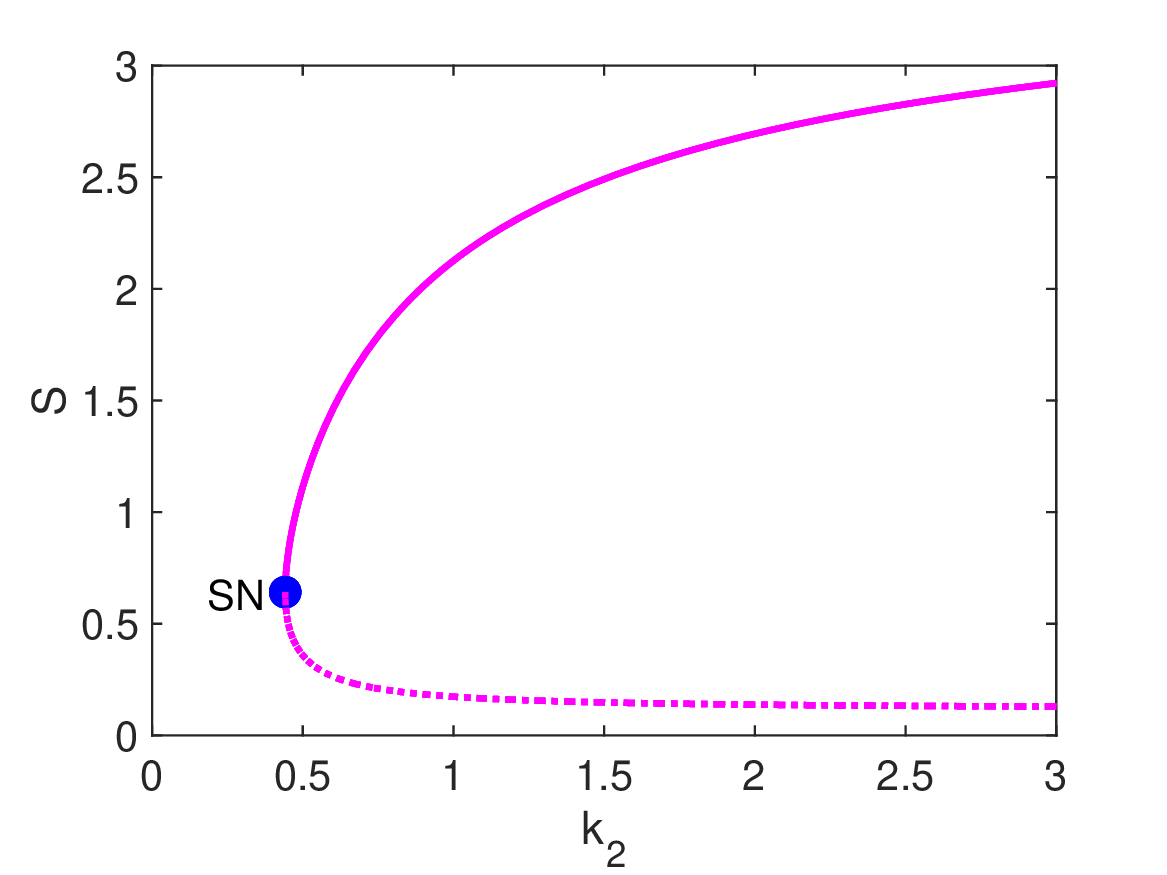}}
\end{center}
\caption{Saddle-node (SN) bifurcation of model \eqref{Mainsystem} as fear parameters are varied (a) $k_2=1$ is fixed and $k_1^*=0.4181$  around $E_4=(0.4615,1.0565,0.8523)$ (b) $k_1=0.1$ is fixed and $k_2^*=0.4417$ around $E_4=(0.6418,0.9591,1.5854)$. All other parameters are fixed and given as $b_0=8,~K=4,~a_0=0.5,~d_0=0.7,~r=0.5,~e_0=4,~a_1=0.4,~d_1=0.7,~a_2=0.8,~d_2=0.4,~d_3=0.5$. Solid line denotes stable and dotted line denotes unstable.}
\label{fig:bif:SNk}
\end{figure}

\subsubsection{Hopf bifurcation}
Hopf bifurcation manifests when the stability of a system undergoes a transition as a parameter crosses a critical threshold. This pivotal point gives rise to the emergence of a limit cycle, resulting in the onset of periodic behavior within the system. This phenomenon is particularly significant as it sheds light on the transformation of system dynamics from equilibrium to sustained oscillations. By studying Hopf bifurcations, researchers gain valuable insights into the intricate interplay between stability and oscillatory behavior, unraveling the underlying mechanisms that govern the appearance and persistence of periodic patterns in diverse systems.

\begin{theorem}\label{thm:hopfk2}
The system governed by the model \eqref{Mainsystem} exhibits a Hopf bifurcation near the point $E_4$ as the parameter $k_2$ surpasses a critical value denoted as $k_2^{H}$. This bifurcation occurs when the following conditions are met:
		\begin{eqnarray}\label{c1}
		\Psi_{1}(k_2^H)>0, ~\Psi_{3}(k_2^H)>0, \quad \Psi_{1}(k_2^H)\Psi_{2}(k_2^H) =\Psi_{3}(k_2^H)
		\end{eqnarray}
		and
		\begin{eqnarray}\label{c2}
		\left[\Psi_{1}(k_2)\Psi_{2}(k_2)\right]^{\prime}_{k_2 = k_2^H} \neq \Psi_{3}^{\prime}(k_2^H).
		\label{eq:}
		\end{eqnarray}
\end{theorem}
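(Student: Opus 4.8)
The plan is to apply the Hopf bifurcation theorem to the characteristic equation $\lambda^3+\Psi_1\lambda^2+\Psi_2\lambda+\Psi_3=0$ derived for $\mathbb{J}_{E_4}$ in Theorem \ref{thm:coexistence}, regarding each coefficient $\Psi_i=\Psi_i(k_2)$ as a smooth function of the bifurcation parameter. First I would exploit the defining relation $\Psi_1(k_2^H)\Psi_2(k_2^H)=\Psi_3(k_2^H)$ from \eqref{c1}: substituting $\Psi_3=\Psi_1\Psi_2$ at criticality factors the cubic as $(\lambda+\Psi_1)(\lambda^2+\Psi_2)=0$, so the spectrum of $\mathbb{J}_{E_4}$ at $k_2=k_2^H$ consists of the real root $\lambda_3=-\Psi_1(k_2^H)$ together with the conjugate pair $\lambda_{1,2}=\pm i\omega_0$, where $\omega_0=\sqrt{\Psi_2(k_2^H)}$.

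Next I would verify that this configuration is of genuine Hopf type. Because $\Psi_1(k_2^H)>0$ and $\Psi_3(k_2^H)>0$ by \eqref{c1}, the identity $\Psi_2(k_2^H)=\Psi_3(k_2^H)/\Psi_1(k_2^H)$ forces $\Psi_2(k_2^H)>0$; hence $\omega_0$ is real and strictly positive and the pair $\pm i\omega_0$ lies exactly on the imaginary axis, while $\lambda_3=-\Psi_1(k_2^H)<0$ is real with nonzero real part. This rules out a zero eigenvalue or a triple degeneracy and certifies a simple pair of purely imaginary eigenvalues alongside a hyperbolic third eigenvalue, which are the standing hypotheses of the Hopf theorem.

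The decisive step is the transversality condition. I would let $\lambda(k_2)=\alpha(k_2)+i\beta(k_2)$ denote the branch with $\alpha(k_2^H)=0$ and $\beta(k_2^H)=\omega_0$, and differentiate the characteristic equation $p(\lambda,k_2)=0$ implicitly to obtain $\lambda'=-(\partial_{k_2}p)/(\partial_\lambda p)$, with $\partial_\lambda p=3\lambda^2+2\Psi_1\lambda+\Psi_2$ and $\partial_{k_2}p=\Psi_1'\lambda^2+\Psi_2'\lambda+\Psi_3'$. Evaluating at $\lambda=i\omega_0$ with $\omega_0^2=\Psi_2$ and extracting the real part, the complex arithmetic collapses to
\[\left.\frac{d\,\mathrm{Re}(\lambda)}{dk_2}\right|_{k_2=k_2^H}=-\frac{(\Psi_1\Psi_2)'(k_2^H)-\Psi_3'(k_2^H)}{2\big(\Psi_1(k_2^H)^2+\Psi_2(k_2^H)\big)}.\]
Since the denominator is strictly positive, this derivative is nonzero exactly when \eqref{c2} holds, so the eigenvalue pair crosses the imaginary axis transversally; combined with the spectral picture above, the Hopf bifurcation theorem \cite{Perko13} then yields the bifurcating limit cycle near $E_4$.

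The main obstacle I anticipate is the real-part extraction in the last step: the intermediate expression carries $\Psi_1'\Psi_2$, $\Psi_3'$, and $\Psi_1\Psi_2'$ as separate terms, and one must check that they reassemble precisely into $(\Psi_1\Psi_2)'-\Psi_3'$ while the denominator stays positive. A secondary subtlety is that $\Psi_1,\Psi_2,\Psi_3$ depend on $k_2$ both explicitly and through the moving equilibrium coordinates $(S^*,I^*,P^*)$; I would therefore treat each $\Psi_i$ as a composite $C^1$ function of $k_2$, so that the derivatives $\Psi_i'$ appearing in \eqref{c2} absorb this implicit dependence, which is also what makes an isolated crossing value $k_2^H$ meaningful. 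The remaining ingredients—the factorization and the positivity checks—follow immediately once \eqref{c1} is invoked.
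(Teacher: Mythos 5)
Your proposal is correct and follows essentially the same route as the paper: factor the characteristic cubic at $k_2=k_2^H$ using $\Psi_3=\Psi_1\Psi_2$ to exhibit the simple purely imaginary pair plus a negative real eigenvalue, then verify transversality by differentiating the characteristic equation, arriving at the identical expression $\big(\Psi_3'-(\Psi_1\Psi_2)'\big)\big/\big(2(\Psi_2+\Psi_1^2)\big)$ for the real-part derivative. The only cosmetic difference is that you compute $\lambda'=-\partial_{k_2}p/\partial_\lambda p$ directly on the complex polynomial, whereas the paper splits $\lambda=\eta+i\vartheta$ and solves the resulting real $2\times 2$ linear system; your added observation that $\Psi_2(k_2^H)>0$ follows from $\Psi_1,\Psi_3>0$ is a small but welcome explicit check.
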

\begin{proof}
To observe the occurrence of Hopf bifurcation at $k_2=k_2^{H}$ near the coexistence equilibrium $E_4$, the characteristic equation must exhibit the following form:
		\begin{equation}\label{characeq}
		(\lambda^{2}(k_2^{H}) + \Psi_{2}(k_2^{H}))(\lambda(k_2^{H})+\Psi_{1}(k_2^{H}))=0 ,
		\end{equation}
		which has roots $\lambda_{1}(k_2^{H}) = i \sqrt{\Psi_{2}(k_2^{H})},$  $\lambda_{2}(k_2^{H}) = -i \sqrt{\Psi_{2}(k_2^{H})},$  $\lambda_{3}(k_2^{H}) = - \Psi_{1}(k_2^{H})<0,$ then, $\Psi_3(k_2^{H}) = \Psi_1(k_2^{H})\Psi_2(k_2^{H})$. To establish the presence of Hopf bifurcation at $k_2 =k_2^{H}$ near the coexistence equilibrium point $E_4$, it is necessary to verify the transversality condition:
		\begin{equation}
		\Bigg[\dfrac{\text{d}(Re\lambda_{j}(k_2))}{\text{d}k_2}\Bigg]_{k_2=k_2^{H}}\not=0, j=1,2.
		\end{equation} 
Through the substitution of $\lambda_{j}(k_2) = \eta(k_2)+i\vartheta(k_2)$ into (\ref{characeq}) and subsequent differentiation, we derive the following result:
		
	\begin{eqnarray}
		F_{1}(k_2)\eta^{\prime}(k_2)-F_{2}(k_2)\vartheta^{\prime}(k_2) +F_{4}(k_2) &=& 0, \label{u1}\\
		F_{2}(k_2)\eta^{\prime}(k_2) + F_{1}(k_2)\vartheta^{\prime}(k_2) + F_{3}(k_2)&=&0,\label{u2}
		\end{eqnarray}
		where
		\begin{eqnarray*}
			F_{1}(k_2)&=&3\eta^{2}(k_2)-3\vartheta^{2}(k_2)+\Psi_{2}(k_2)+2\Psi_{1}(k_2)\eta(k_2),\\
			F_{2}(k_2)&=& 6\eta(k_2)\vartheta(k_2)+2\Psi_{1}(k_2)\vartheta(k_2),\\
			F_{3}(k_2)&=&2\eta(k_2)\vartheta(k_2)\Psi_{1}^{\prime}(k_2)+\Psi_{2}^{\prime}(k_2)\vartheta(k_2),\\
			F_{4}(k_2)&=&\Psi_{2}^{\prime}(k_2)\eta(k_2)+\eta^{2}(k_2)\Psi_{1}^{\prime}(k_2)-\vartheta^{2}(k_2)\Psi_{1}^{\prime}(k_2) + \Psi_{3}^{\prime}(k_2).
		\end{eqnarray*}
				
\noindent At $k_2=k_2^{H},$ $\eta(k_2^{H})=0$ and $\vartheta(k_2^{H})=\sqrt{\Psi_{2}(k_2^{H})}$.  We obtain
\begin{eqnarray*}
			F_{1}(k_2^{H}) &=& -2 \Psi_{2}(k_2^{H}),\\
			F_{2}(k_2^{H}) &=& 2 \Psi_{1}(k_2^{H})\sqrt{\Psi_{2}(k_2^{H})},\\
			F_{3}(k_2^{H}) &=& \Psi_{2}^{\prime}(k_2^{H})\sqrt{\Psi_{2}(k_2^{H})},\\
			F_{4}(k_2^{H}) &=& \Psi_{3}^{\prime}(k_2^{H}) - \Psi_{2}(k_2^{H})\Psi_{1}^{\prime}(k_2^{H}).
		\end{eqnarray*}	
Upon solving equations \eqref{u1} and \eqref{u2} for $\eta^{\prime}(k_2^{H})$, we derive the following expression:
		\begin{eqnarray*}
			\left[\frac{\text{d}Re(\lambda_{j}(k_2))}{\text{d}k_2}\right]_{k_2=k_2^{H}}&=&\eta^{\prime}(k_2^{H})\\&=&-\frac{F_{4}(k_2^{H})F_{1}(k_2^{H})+F_{3}(k_2^{H})F_{2}(k_2^{H})}{F_{1}^{2}(k_2^{H})+F_{2}^{2}(k_2^{H})}\\
			&=&\frac{\Psi_{3}^{\prime}(k_2^{H})-\Psi_{2}(k_2^{H})\Psi_{1}^{\prime}(k_2^{H})-\Psi_{1}(k_2^{H})\Psi_{2}^{\prime}(k_2^{H})}{2\left(\Psi_{2}(k_2^{H})+\Psi_{1}^{2}(k_2^{H})\right)}\not=0
		\end{eqnarray*}
				
\noindent on condition that
\begin{align*}
\left[\Psi_{1}(k_2)\Psi_{2}(k_2)\right]^{\prime}_{k_2 = k_2^{H}} \neq \Psi_{3}^{\prime}(k_2^{H}).
\end{align*}

\noindent Consequently, the satisfaction of the transversality condition indicates that the model \eqref{Mainsystem} undergoes a Hopf bifurcation near the coexistence equilibrium point $E_4$ when $k_2=k_2^H$.
\end{proof}

\begin{theorem}\label{thm:hopfk1}
The system governed by the model \eqref{Mainsystem} exhibits a Hopf bifurcation near the point $E_4$ as the parameter $k_1$ surpasses a critical value denoted as $k_1^{H}$. This bifurcation occurs when the following conditions are met:
		\begin{eqnarray}\label{c1}
		\Psi_{1}(k_1^H)>0, ~\Psi_{3}(k_1^H)>0, \quad \Psi_{1}(k_1^H)\Psi_{2}(k_1^H) =\Psi_{3}(k_1^H)
		\end{eqnarray}
		and
		\begin{eqnarray}\label{c2}
		\left[\Psi_{1}(k_1)\Psi_{2}(k_1)\right]^{\prime}_{k_1 = k_1^H} \neq \Psi_{3}^{\prime}(k_1^H).
		\label{eq:}
		\end{eqnarray}
\end{theorem}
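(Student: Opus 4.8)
The plan is to follow the argument of Theorem \ref{thm:hopfk2} essentially verbatim, now treating the Routh--Hurwitz coefficients $\Psi_1,\Psi_2,\Psi_3$ as functions of $k_1$; these enter both explicitly, through the factor $1/(1+k_1P^*)$ appearing in the first row of $\mathbb{J}_{E_4}$, and implicitly, through the $k_1$-dependence of the equilibrium coordinates $(S^*,I^*,P^*)$. First I would record the characteristic equation at $E_4$, namely $\lambda^3+\Psi_1\lambda^2+\Psi_2\lambda+\Psi_3=0$, and observe that the boundary Routh--Hurwitz relation $\Psi_1(k_1^{H})\Psi_2(k_1^{H})=\Psi_3(k_1^{H})$ forces the factorization
\begin{equation*}
\big(\lambda^2+\Psi_2(k_1^{H})\big)\big(\lambda+\Psi_1(k_1^{H})\big)=0.
\end{equation*}
Since the hypotheses give $\Psi_2(k_1^{H})=\Psi_3(k_1^{H})/\Psi_1(k_1^{H})>0$, this produces a conjugate pair of purely imaginary roots $\lambda_{1,2}(k_1^{H})=\pm i\sqrt{\Psi_2(k_1^{H})}$ together with a third root $\lambda_3(k_1^{H})=-\Psi_1(k_1^{H})<0$, which is exactly the spectral configuration needed to initiate a Hopf bifurcation.

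Next I would establish the transversality condition. Writing the complex conjugate eigenvalues as $\lambda_j(k_1)=\eta(k_1)+i\vartheta(k_1)$, substituting into the characteristic equation, and differentiating with respect to $k_1$ yields a real linear system in $\eta'(k_1)$ and $\vartheta'(k_1)$ of the same shape as \eqref{u1}--\eqref{u2}, with the coefficient functions $F_1,\dots,F_4$ now carrying $k_1$-derivatives of $\Psi_1,\Psi_2,\Psi_3$. Evaluating at $k_1=k_1^{H}$, where $\eta(k_1^{H})=0$ and $\vartheta(k_1^{H})=\sqrt{\Psi_2(k_1^{H})}$, and solving for the real part gives
\begin{equation*}
\left[\frac{\text{d}\,Re(\lambda_j(k_1))}{\text{d}k_1}\right]_{k_1=k_1^{H}}=\frac{\Psi_3'(k_1^{H})-\Psi_2(k_1^{H})\Psi_1'(k_1^{H})-\Psi_1(k_1^{H})\Psi_2'(k_1^{H})}{2\big(\Psi_2(k_1^{H})+\Psi_1^2(k_1^{H})\big)}.
\end{equation*}
The denominator is strictly positive because $\Psi_2(k_1^{H})>0$, so this derivative fails to vanish exactly when the numerator is nonzero, that is, when $\left[\Psi_1(k_1)\Psi_2(k_1)\right]'_{k_1=k_1^{H}}\neq\Psi_3'(k_1^{H})$ — precisely the nondegeneracy hypothesis of the theorem. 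This confirms the simple, transversal crossing of the imaginary axis and completes the argument.

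The one place where the $k_1$ case genuinely departs from Theorem \ref{thm:hopfk2} is in the meaning of the derivatives $\Psi_i'(k_1^{H})$: unlike the transversality bookkeeping, which is parameter-agnostic, these require implicit differentiation of the equilibrium relations \eqref{susc=S*}--\eqref{pred=P*} with respect to $k_1$, since $S^*,I^*,P^*$ all shift as $k_1$ varies. I expect this implicit-function computation to be the main obstacle, and in practice it is the step most naturally discharged numerically, as in the accompanying figures, rather than in closed form. For the abstract existence statement, however, it suffices to treat each $\Psi_i$ as a smooth function of $k_1$ and to invoke the nondegeneracy inequality as a standing hypothesis, while the positivity conditions $\Psi_1,\Psi_3>0$ keep the remaining eigenvalue in the left half-plane so that no further sign analysis is needed. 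Because every algebraic manipulation after the factorization mirrors the $k_2$ argument line for line, I would present the proof briefly and refer the reader to Theorem \ref{thm:hopfk2} for the repeated details.
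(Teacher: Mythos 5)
Your proposal is correct and follows essentially the same route as the paper: the paper's own proof of this theorem simply defers to the proof of Theorem \ref{thm:hopfk2}, and your argument reproduces that proof's factorization of the characteristic polynomial, the purely imaginary pair $\pm i\sqrt{\Psi_2(k_1^{H})}$ with the negative real root $-\Psi_1(k_1^{H})$, and the identical transversality computation with $k_2$ replaced by $k_1$. Your added remark that the $\Psi_i'(k_1^{H})$ implicitly involve the $k_1$-dependence of $(S^*,I^*,P^*)$ is a fair observation, but it does not change the structure of the argument.
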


\begin{proof}
The proof can be readily derived from the proof of Theorem \ref{thm:hopfk2} and is therefore omitted.
\end{proof}

\subsubsection{Transcritical bifurcation}
This bifurcation arises when the stability of two equilibrium points, one stable and one unstable, is interchanged with the variation of a parameter. This stability exchange occurs precisely when the parameter surpasses a critical value. Throughout a transcritical bifurcation, the stable and unstable equilibria coexist both prior to and following the bifurcation point, but their respective roles are reversed.

\begin{theorem}\label{thm:transcriticalk1}
The model \eqref{Mainsystem} experiences transcritical bifurcation around the infectious prey free equilibrium $E_3$ when $k_1$ crosses the critical threshold value $k_1^{TC}$, where $k_1^{TC}$ is computed when $B_{11}|_{k_1=k_1^{TC}}B_{33}-B_{13}|_{k_1=k_1^{TC}}B_{31}=0$. The values of $B_{ij}$ for $i,j=1,2,3$ can be found in equation \eqref{JacE3}.
\end{theorem}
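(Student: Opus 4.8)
The natural route is Sotomayor's theorem \cite{Perko13}, following the template of Theorems~\ref{thm:saddle-node} and~\ref{thm:saddle-nodek2} but verifying the nondegeneracy conditions proper to a \emph{transcritical} rather than a saddle-node bifurcation. Because the middle row of $\mathbb{J}_{E_3}$ in \eqref{JacE3} splits off the eigenvalue $\lambda_1=B_{22}$, the determinant factors as $\det(\mathbb{J}_{E_3})=B_{22}\big(B_{11}B_{33}-B_{13}B_{31}\big)$, so the stated threshold $k_1^{TC}$, defined by $B_{11}B_{33}-B_{13}B_{31}=0$, is exactly the value at which $\mathbb{J}_{E_3}$ acquires a zero eigenvalue. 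I would first record the structural identity $B_{33}=-a_2+d_2S_3^{\,r}=0$, which holds automatically since $S_3=(a_2/d_2)^{1/r}$ by \eqref{cond:s3}; the critical condition then reduces to $B_{13}B_{31}=0$. I would also confirm the zero eigenvalue is simple by checking $B_{22}\neq0$ and $B_{11}+B_{33}=B_{11}\neq0$, so that exactly one eigenvalue passes through the origin at $k_1^{TC}$.

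Next I would compute the right and left null vectors $X$ and $Y$ of $\mathbb{J}_{E_3}$ at $k_1^{TC}$ (and of $\mathbb{J}_{E_3}^{T}$), with $X=(\bar x_1,\bar x_2,\bar x_3)^T$ normalized by $\bar x_3=1$ and $Y=(\bar y_1,\bar y_2,\bar y_3)^T$ by $\bar y_3=1$, reading $\bar x_2$ off the decoupled middle row and $\bar y_1,\bar y_2$ off the first and second rows of $\mathbb{J}_{E_3}^{T}$. With $G=(G_1,G_2,G_3)^T$ from \eqref{Mainsystem}, I would then assemble the three Sotomayor quantities in the transcritical form: $Y^TG_{k_1}(E_3,k_1^{TC})$, $Y^T\big[DG_{k_1}(E_3,k_1^{TC})X\big]$, and $Y^T\big[D^2G(E_3,k_1^{TC})(X,X)\big]$, using that only $G_1$ carries a $k_1$-dependence, so $G_{k_1}=\big(\partial_{k_1}G_1,0,0\big)^T$ with $\partial_{k_1}G_1=-\tfrac{b_0SP}{(1+k_1P)^2}\big(1-\tfrac{S+I}{K}\big)$.

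The transcritical criterion requires $Y^TG_{k_1}(E_3,k_1^{TC})=0$---in sharp contrast to the saddle-node results, where this quantity is nonzero---together with $Y^T[DG_{k_1}X]\neq0$ and $Y^T[D^2G(X,X)]\neq0$, and I expect the vanishing identity to be the main obstacle. Since $G_{k_1}$ has only a first component, this inner product collapses to $\bar y_1\,\partial_{k_1}G_1|_{E_3}$; because the factor $\partial_{k_1}G_1|_{E_3}=-\tfrac{b_0S_3P_3}{(1+k_1P_3)^2}\big(1-\tfrac{S_3}{K}\big)$ does not vanish at a feasible $E_3$ with $S_3<K$ and $P_3>0$, the identity $Y^TG_{k_1}=0$ can only come from $\bar y_1=0$, which in turn forces the $B_{31}=0$ branch of the critical set rather than the $B_{13}=0$ branch. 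The delicate point is that this same degeneracy of $Y$ also threatens the first nondegeneracy inequality, since $Y^T[DG_{k_1}X]$ likewise carries a $\bar y_1$ factor; thus the crux of the proof is to pin down the threshold $k_1^{TC}$ so that $Y^TG_{k_1}=0$ holds while $Y^T[DG_{k_1}X]$ and $Y^T[D^2G(X,X)]$ remain nonzero, which requires a careful joint analysis of the $k_1$-dependence of $P_3$, $B_{13}$, and $B_{31}$ at the threshold rather than a term-by-term cancellation.

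Once the correct branch is isolated and $Y^TG_{k_1}=0$ is secured, the two remaining conditions reduce to explicit sign checks: $D^2G(E_3,k_1^{TC})(X,X)$ involves only pure second partials of $G$ along $X$, which at $E_3$ simplify substantially because $I=0$ eliminates several second-derivative contributions in $G_2$ and $G_3$, and $DG_{k_1}X$ is an explicit first-order expression. Evaluating these against $Y$ and invoking Sotomayor's theorem then yields the transcritical bifurcation of model \eqref{Mainsystem} at the infectious-prey-free equilibrium $E_3$ as $k_1$ crosses $k_1^{TC}$.
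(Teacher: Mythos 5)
Your setup coincides with the paper's: both invoke Sotomayor's theorem at $E_3$, locate the zero eigenvalue through the factorization $\det(\mathbb{J}_{E_3})=B_{22}\left(B_{11}B_{33}-B_{13}B_{31}\right)$, and your observations that $B_{33}=-a_2+d_2S_3^{\,r}=0$ holds identically and that the critical set therefore collapses to $B_{13}=0$ (since $B_{31}=rd_2S_3^{\,r-1}P_3>0$) are correct refinements that the paper does not record. The problem is that you never close the argument. The entire transcritical case of Sotomayor hinges on the degeneracy $Y^TG_{k_1}(E_3,k_1^{TC})=0$, and your own computation shows it cannot hold on the feasible branch: with $B_{33}=0$ and $B_{23}=0$ the third component of $Y^T\mathbb{J}_{E_3}=0$ reads $\bar y_1B_{13}=0$ and is vacuous at $B_{13}=0$, while the first component forces $\bar y_1=-B_{31}/B_{11}\neq 0$; combined with $\partial G_1/\partial k_1|_{E_3}=-\tfrac{b_0S_3P_3}{(1+k_1P_3)^2}\left(1-\tfrac{S_3}{K}\right)\neq 0$ for $0<S_3<K$ and $P_3>0$, this yields $Y^TG_{k_1}\neq 0$ --- which is the saddle-node condition, not the transcritical one. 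The escape you gesture at, ``the $B_{31}=0$ branch,'' does not exist, because $B_{31}>0$ whenever $P_3>0$. Deferring the resolution to ``a careful joint analysis'' is not a proof; as written, the proposal terminates in an unresolved tension with the very conclusion it is meant to establish.

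To be fair, the paper's own proof simply asserts $M_{2}^{T}G_{k_1}(E_3,k_1^{TC})=0$ without computation, and your analysis shows that this assertion is precisely what needs justification. The standard mechanism for a transcritical bifurcation at a boundary equilibrium such as $E_3$ (which sits in the invariant plane $I=0$) is to take the vanishing eigenvalue to be the transversal one, $B_{22}=0$; there the left null vector is $(0,1,0)$ and $Y^TG_{k_1}=0$ holds automatically because $G_2$ does not depend on $k_1$. If you intend to defend the theorem with the threshold defined by $B_{11}B_{33}-B_{13}B_{31}=0$, you must either exhibit a mechanism that annihilates $Y^TG_{k_1}$ at that threshold or concede that the crossing there satisfies the saddle-node rather than the transcritical nondegeneracy conditions. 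Either way, the step you yourself label ``the crux'' is exactly the step that is missing.
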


\begin{proof}
One of the eigenvalues of the Jacobian matrix in equation \eqref{JacE3} is given by $B_{22}$, while the other eigenvalues are determined by solving the quadratic equation presented in equation \eqref{charac:s3}. Evaluating the Jacobian matrix of model \eqref{Mainsystem} at equilibrium point $E_3$, we find that it has a zero eigenvalue if $B_{11}B_{33}-B_{13}B_{31}=0$, which implies $k_1=k_1^{TC}$.

Next, we determine the eigenvectors $M_1$ and $M_2$ associated with the zero eigenvalue of the matrices $J_{E_3}^{TC}$ and $(J_{E_3}^{TC})^T$, respectively. We find that $M_1=(\sigma_1,\sigma_2,\sigma_3)^T$ and $M_2=(\varrho_1,\varrho_2,\varrho_3)^T$. Furthermore, we observe the following conditions: \\
$M_{2}^{T}G_{k_1}\left(E_3,k_1^{TC} \right)=0$, $M_{2}^{T}\left[DG_{k_1}\left(E_3,k_1^{TC} \right)M_1\right] \neq 0$, and\\
$M_{2}^{T}\left[D^2 G\left(E_3,k_1^{TC} \right)(M_1,M_1)\right]\neq 0$. Based on these conditions, we conclude that model \eqref{Mainsystem} undergoes a transcritical bifurcation at equilibrium point $E_3$ when $k_1=k_1^{TC}$, as the transversality conditions are satisfied according to Sotomayor's theorem \cite{Perko13}. 

\end{proof}

\begin{theorem}\label{thm:transcriticalk2}
The model \eqref{Mainsystem} experiences transcritical bifurcation around the infectious prey free equilibrium $E_3$ when $k_2$ crosses the critical threshold value $k_2^{TC}$, where $k_2^{TC}$ is computed when $B_{11}|_{k_2=k_2^{TC}}B_{33}-B_{13}|_{k_2=k_2^{TC}}B_{31}=0$. The values of $B_{ij}$ for $i,j=1,2,3$ can be found in equation \eqref{JacE3}.
\end{theorem}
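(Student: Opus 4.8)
The plan is to mirror the proof of Theorem \ref{thm:transcriticalk1} verbatim, now promoting $k_2$ to the active parameter, and to invoke Sotomayor's theorem \cite{Perko13} at the infectious prey free equilibrium $E_3$. First I would record the spectrum of $\mathbb{J}_{E_3}$ in \eqref{JacE3}: since the middle row $(0,B_{22},0)$ decouples, the characteristic polynomial factors as $(B_{22}-\lambda)\big[\lambda^2-(B_{11}+B_{33})\lambda+(B_{11}B_{33}-B_{13}B_{31})\big]$, so one eigenvalue is $B_{22}$ and the other two are the roots of \eqref{charac:s3}. This quadratic has a simple zero root exactly when its constant term vanishes, i.e.\ when $B_{11}B_{33}-B_{13}B_{31}=0$. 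I would \emph{define} $k_2^{TC}$ as the parameter value at which this holds, matching the statement, and check that the zero eigenvalue is simple at $k_2^{TC}$ (so that $B_{22}\neq 0$ and $B_{11}+B_{33}\neq 0$, keeping the remaining two eigenvalues off the origin).

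Next I would build the one-dimensional kernels of the zero eigenvalue. Writing $M_1=(\sigma_1,\sigma_2,\sigma_3)^T$ for the right null vector of $\mathbb{J}_{E_3}^{TC}$ and $M_2=(\varrho_1,\varrho_2,\varrho_3)^T$ for the left null vector (the right null vector of $(\mathbb{J}_{E_3}^{TC})^T$), I would solve the two homogeneous $3\times 3$ systems explicitly; the block structure of \eqref{JacE3} makes both solvable in closed form, fixing $M_1$ and $M_2$ up to scale in terms of the entries $B_{ij}$ evaluated at $k_2^{TC}$. With $G=(G_1,G_2,G_3)^T$ from \eqref{Mainsystem}, the three Sotomayor conditions must then be verified. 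The first condition, $M_2^T G_{k_2}(E_3,k_2^{TC})=0$, is where the $k_2$ problem is structurally cleaner than the $k_1$ problem: the only $k_2$-dependence of $G$ lies in the transmission terms $\pm\,e_0SI/(1+k_2P)$, so
\begin{equation*}
G_{k_2}=\Big(\tfrac{e_0SIP}{(1+k_2P)^2},\,-\tfrac{e_0SIP}{(1+k_2P)^2},\,0\Big)^T,
\end{equation*}
and every component carries an explicit factor of $I$. Since $I=0$ at $E_3$, we obtain $G_{k_2}(E_3,k_2^{TC})=\mathbf{0}$, so the first condition holds automatically and the saddle-node alternative is ruled out; only the nondegeneracy conditions $M_2^T\big[DG_{k_2}(E_3,k_2^{TC})\,M_1\big]\neq 0$ and $M_2^T\big[D^2G(E_3,k_2^{TC})(M_1,M_1)\big]\neq 0$ remain.

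I expect these two nondegeneracy conditions to be the main obstacle. For the mixed term, $DG_{k_2}$ retains at $E_3$ only the $\partial_I$ derivative of the transmission terms (the $\partial_S$ and $\partial_P$ derivatives again carry a factor $I$ that vanishes), so $M_2^T[DG_{k_2}M_1]$ collapses to a single scalar multiple of $e_0S_3P_3/(1+k_2^{TC}P_3)^2$ weighted by the relevant components of $M_1$ and $M_2$; the crux is to track these eigenvector components carefully and confirm the product does not vanish at $k_2^{TC}$. For the second-order term I would assemble the Hessian $D^2G(E_3,k_2^{TC})$ and contract it twice against $M_1$, a bookkeeping-heavy but otherwise routine computation using the explicit partials in the $C_{ij}$/$B_{ij}$ expressions. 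Once both quantities are shown nonzero, Sotomayor's theorem delivers a transcritical bifurcation of \eqref{Mainsystem} at $E_3$ as $k_2$ crosses $k_2^{TC}$, completing the argument in parallel with Theorem \ref{thm:transcriticalk1}.
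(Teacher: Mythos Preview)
There is a genuine gap that your plan (and, to be fair, the theorem statement you are following) overlooks. At $E_3$ one has $S_3=(a_2/d_2)^{1/r}$ and $P_3$ given by \eqref{cond:P3}; neither quantity involves $k_2$. Hence \emph{none} of the entries $B_{11},B_{13},B_{31},B_{33}$ in \eqref{JacE3} depends on $k_2$ (in fact $B_{33}\equiv 0$ because $d_2S_3^r=a_2$), so the equation $B_{11}B_{33}-B_{13}B_{31}=0$ is a fixed number, not an equation in $k_2$: it cannot be solved to produce a threshold $k_2^{TC}$. The only $k_2$-dependent entry of $\mathbb{J}_{E_3}$ is $B_{22}=\frac{e_0S_3}{1+k_2P_3}-a_1-d_1P_3$, and it is the vanishing of this invasion eigenvalue that actually marks the transcritical collision of $E_4$ with $E_3$; this is also what the numerics in Figure~\ref{fig:HTCk1k2}(b) detect at $k_2^{TC}=1.7885$.

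Your own computation exposes the obstruction concretely. If you insist on the zero eigenvalue coming from the $(S,P)$ block (and take $B_{22}\neq 0$ as you do), the middle row of $\mathbb{J}_{E_3}M_1=0$ reads $B_{22}\sigma_2=0$, forcing $\sigma_2=0$. But you correctly note that at $E_3$ only the $\partial_I$-column of $DG_{k_2}$ survives; therefore $DG_{k_2}(E_3,k_2)M_1$ is proportional to $\sigma_2$ and vanishes identically, so the Sotomayor nondegeneracy $M_2^T[DG_{k_2}(E_3,k_2^{TC})M_1]\neq 0$ fails and no transcritical bifurcation in $k_2$ can be concluded along that eigendirection. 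The repair is to define $k_2^{TC}$ by $B_{22}=0$, take the null vectors with nonzero $I$-component, and rerun Sotomayor there. The paper's one-line proof (``readily derived from Theorem~\ref{thm:transcriticalk1}'') does not distinguish these two zero-eigenvalue mechanisms, so mirroring it verbatim does not close the gap.
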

\begin{proof}
The proof can be readily derived from the proof of Theorem \ref{thm:transcriticalk1} and is therefore omitted.
\end{proof}

Next, we present numerical simulations to corroborate theorems \ref{thm:hopfk2}, \ref{thm:hopfk1}, \ref{thm:transcriticalk1} and \ref{thm:transcriticalk2} in Figure \ref{fig:HTCk1k2}.

\begin{figure}[hbt!]
\begin{center}
   \subfigure[]{
   \includegraphics[width=6.15cm, height=6.1cm]{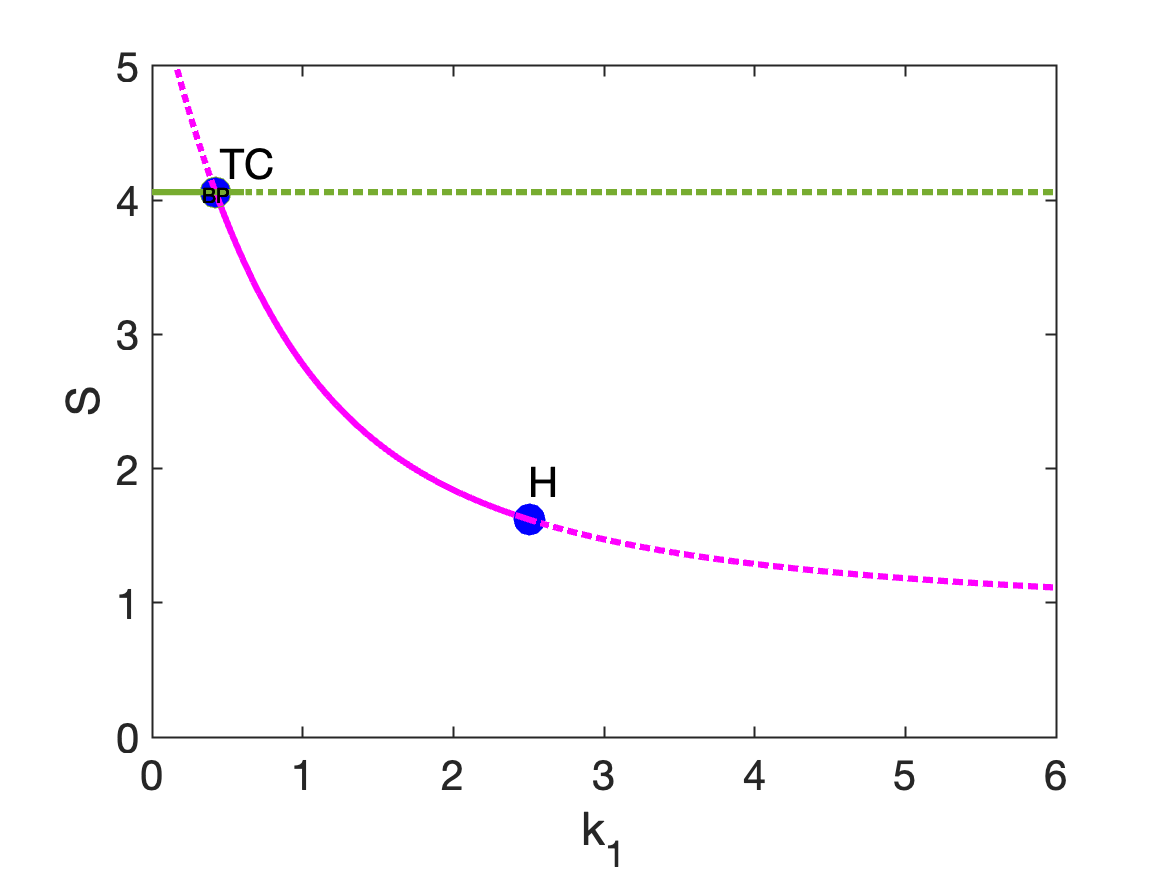}}
\subfigure[]{    
    \includegraphics[width=6.15cm, height=6.1cm]{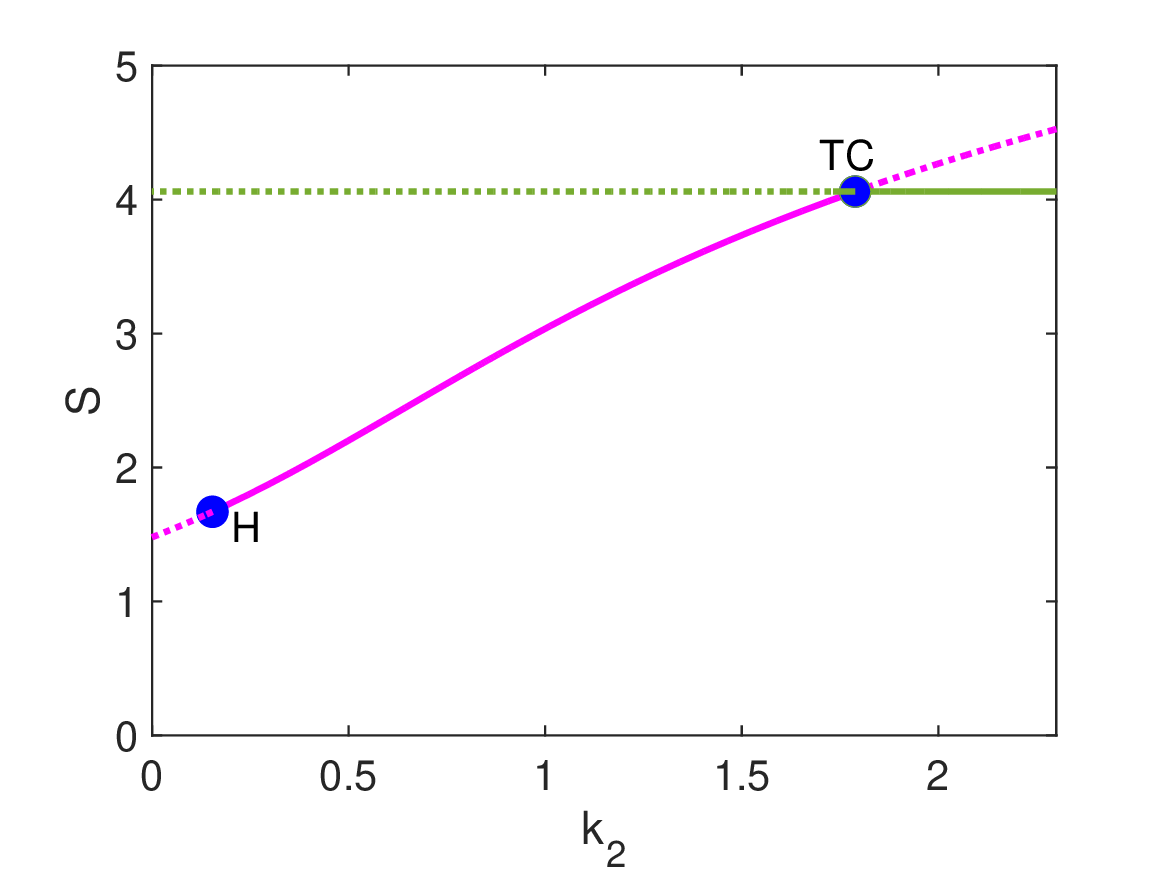}}
\end{center}
\caption{ Hopf and transcritical bifurcations of model \eqref{Mainsystem} as the fear parameters are varied. (a)  Transcritical point (TC) at $k_1^{TC}=0.4219$ around $E_3=(4.0600,0,0.9978)$. Also, the Hopf point (H) $k_1^H=2.5075$ around $E_4=(1.6184,0.7596,0.3308)$ with Lyapunov coefficient of $L_{k_1}=-8.827\times 10^{-3}$, (b) Hopf point $k_2^H=0.1536$ around $E_4=(1.6694,0.7411,0.5311)$ with Lyapunov coefficient of $L_{k_2}=-9.9094\times 10^{-3}$. Also, the transcritical point at $k_2^{TC}=1.7885$ around $E_3=(4.0600,0,0.7081)$. All other parameters are fixed and given as $b_0=2,k_1=0.99,~k_2=0.85,~K=8,~a_0=0.3,~d_0=0.6,~r=0.7,~e_0=0.5,~a_1=0.4,~d_1=0.7,~a_2=0.8,~d_2=0.3,~d_3=0.5$. Solid line denotes stable and dotted line denotes unstable.}
\label{fig:HTCk1k2}
\end{figure}

\subsection{Co-dimension two bifurcation}
Additionally, our investigation delves into the potential existence of a diverse set of co-dimension two bifurcations within the model \eqref{Mainsystem}. These complex bifurcations involve the simultaneous variation of two key parameters and can lead to intricate dynamic behaviors not observed in simpler models.

\subsubsection{Zero-Hopf bifurcation}
The Zero-Hopf bifurcation is a type of unfolding that occurs in a 3-dimensional autonomous differential system, involving two parameters, and characterized by a zero-Hopf equilibrium \cite{LM16, L14}. 

\begin{definition}[Zero-Hopf Equilibrium]
A zero-Hopf equilibrium is a specific type of equilibrium point found in a 3-dimensional autonomous differential system. It is characterized by having a simple zero eigenvalue and a simple pair of purely imaginary eigenvalues.
\end{definition}

By continuation of the saddle-node point or limit point (i.e. $k_2=0.4417$ at ($(0.6418,0.9591,1.5854)$)) with $(k_2,d_0)$ as free parameters, MATCONT package \cite{G05} in MATLAB R2023a detects  a zero-Hopf point ($ZH$) for $k_2=0.9917$ at  $(0.5120,1.0275,0.9416)$ and $d_0=1.4225$, and then numerically compute the eigenvalues. The eigenvalues are given as $\lambda=0$ and $\lambda=\pm 2.2402i$. This two-parameter bifurcation is illustrated in Figures \ref{fig:ZH-SNTC}(a) and (b). Figure \ref{fig:ZH-SNTC}(a) is the 3D representation of the parameters $(k_2,d_0)$ versus the susceptible prey population $S$. These are unfolded in Figure \ref{fig:ZH-SNTC}(b) on the two-parameter curve $(k_2,d_0)$. Again in Figure \ref{fig:ZH-SNTC}(c), we observed two zero-Hopf points when $(k_2, K)$ are used as free parameters. Herein, $ZH_1$ is the first zero-Hopf point and occurred for $(k_2,K)=(0.2868,5.0261)$ at $(0.5394,1.0125,1.5586)$. The eigenvalues associated with $ZH_1$ are $\lambda=0$ and $\lambda=\pm 2.6876i$.  Furthermore,  $ZH_2$ is the second zero-Hopf point and occurred for $(k_2,K)=(0.2585,5.5590)$ at $(0.1487,1.2915,0.2304)$. The eigenvalues associated with $ZH_2$ are $\lambda=0$ and $\lambda=\pm 1.9754i$.

Next, we formulate conjectures about the possible existence of zero-Hopf bifurcation to summarize the numerical findings above.

\begin{conjecture}\label{conj:ZH k_2,b_0}
Consider the model \eqref{Mainsystem} with all parameters fixed except $(k_2,d_0)$.  Then, there exist values  $(k_2^*,d_0^*)$ for which the model \eqref{Mainsystem} undergoes a zero-Hopf bifurcation.
\end{conjecture}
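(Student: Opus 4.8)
The plan is to reduce the analytic occurrence of a zero-Hopf bifurcation to an algebraic condition on the coefficients of the characteristic polynomial of $\mathbb{J}_{E_4}$ and then to establish solvability of that condition in the $(k_2,d_0)$-plane. Recall from Theorem \ref{thm:coexistence} that the characteristic equation at $E_4$ is $\lambda^3+\Psi_1\lambda^2+\Psi_2\lambda+\Psi_3=0$. By the definition above, a zero-Hopf equilibrium requires a simple zero eigenvalue together with a simple pair of purely imaginary eigenvalues $\pm i\omega$. Imposing $\lambda=0$ as a root forces $\Psi_3=0$, after which the polynomial factors as $\lambda(\lambda^2+\Psi_1\lambda+\Psi_2)$; the remaining two roots are purely imaginary exactly when $\Psi_1=0$ and $\Psi_2>0$, with $\omega=\sqrt{\Psi_2}$. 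I would therefore take the triple
\[\Psi_1(k_2,d_0)=0,\qquad \Psi_3(k_2,d_0)=0,\qquad \Psi_2(k_2,d_0)>0\]
as the defining system for the critical pair $(k_2^*,d_0^*)$.

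First I would regard $E_4=(S^*,I^*,P^*)$, and hence each $\Psi_j$, as implicit functions of the two free parameters through the nullcline equations \eqref{susc=S*}--\eqref{pred=P*}. Next I would treat $\Psi_1=0$, $\Psi_3=0$ as a two-dimensional root-finding problem and argue for a simultaneous zero. The cleanest route is a continuation/degree argument anchored at the numerically located saddle-node (limit) point, where $\Psi_3=0$ already holds: one follows the curve $\{\Psi_3=0\}$ in the $(k_2,d_0)$-plane and shows that $\Psi_1$ changes sign along it, so that by the intermediate value theorem the curves $\{\Psi_1=0\}$ and $\{\Psi_3=0\}$ must cross. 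Having isolated a candidate $(k_2^*,d_0^*)$, I would verify $\Psi_2(k_2^*,d_0^*)>0$ to guarantee a genuine imaginary pair rather than a double zero eigenvalue.

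To upgrade the zero-Hopf \emph{equilibrium} to a zero-Hopf \emph{bifurcation}, I would check the nondegeneracy and transversality conditions of the standard two-parameter unfolding in \cite{LM16, L14}: the eigenvalues must cross transversally as $(k_2,d_0)$ vary, which amounts to nonsingularity of the Jacobian of the map $(k_2,d_0)\mapsto(\Psi_1,\Psi_3)$ at the critical point,
\[\det\frac{\partial(\Psi_1,\Psi_3)}{\partial(k_2,d_0)}\neq 0,\]
together with the nonvanishing of the quadratic normal-form coefficients obtained by projecting $D^2G$ onto the center eigenspace spanned by the eigenvectors of the zero and the purely imaginary eigenvalues.

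The hard part will be the middle step. Because $E_4$ is defined only implicitly---$S^*$ enters through $S^{*r}$ with $r\in(0,1)$, $P^*$ solves a parameter-dependent quadratic, and $I^*$ is a rational expression in all of these---the coefficients $\Psi_1,\Psi_2,\Psi_3$ are transcendental, deeply nested functions of $(k_2,d_0)$. Exhibiting a closed-form common root of $\Psi_1$ and $\Psi_3$, or even rigorously certifying the sign change needed for the intermediate-value argument, appears analytically intractable; this is precisely why the statement is posed as a conjecture and corroborated by MATCONT continuation rather than proved in closed form. A fully rigorous argument would likely require either interval-arithmetic/computer-assisted validation of the numerically detected crossing, or a perturbative reduction in a distinguished limit (for instance small $d_3$ or $r\to 1$) in which $E_4$ admits an explicit expansion and the defining system becomes amenable to the implicit function theorem.
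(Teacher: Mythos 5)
The statement you are addressing is a conjecture, and the paper offers no proof of it: the authors' only evidence is numerical, namely a MATCONT continuation of the saddle-node (limit) point in the $(k_2,d_0)$-plane that detects a point with eigenvalues $\lambda=0$ and $\lambda=\pm 2.2402i$ at $k_2=0.9917$, $d_0=1.4225$. So there is no written argument to compare yours against line by line. What you have produced is the correct analytic formalization of what the authors do numerically: your defining system $\Psi_1(k_2,d_0)=0$, $\Psi_3(k_2,d_0)=0$, $\Psi_2(k_2,d_0)>0$ is exactly the algebraic characterization of a zero-Hopf equilibrium for the cubic characteristic polynomial of $\mathbb{J}_{E_4}$, and your proposal to follow the curve $\{\Psi_3=0\}$ (the saddle-node locus) and look for a sign change of $\Psi_1$ along it is precisely the analytic shadow of the paper's continuation from the limit point at $k_2=0.4417$. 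Your additional transversality requirement, nonsingularity of $\partial(\Psi_1,\Psi_3)/\partial(k_2,d_0)$, is a sensible versality condition for the two-parameter unfolding that the paper does not even state.

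That said, your proposal is not a proof, and you say so yourself: the central step --- certifying that $\Psi_1$ actually changes sign along $\{\Psi_3=0\}$, and that $\Psi_2>0$ at the crossing --- is asserted via an intermediate-value argument whose hypotheses are never verified. Since $E_4$ is only implicitly defined (through $S^{*r}$ with $r\in(0,1)$, a parameter-dependent quadratic for $P^*$, and a rational expression for $I^*$), the signs of $\Psi_1$ and $\Psi_2$ at the endpoints of any arc of $\{\Psi_3=0\}$ are not accessible in closed form, and you give no distinguished limit or interval-arithmetic computation that would pin them down. This is a genuine gap, but it is the same gap the authors leave open: the statement is posed as a conjecture precisely because this existence step resists a closed-form argument. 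Your suggested remedies (computer-assisted validation of the numerically detected crossing, or a perturbative regime such as $r\to 1$ where $E_4$ expands explicitly and the implicit function theorem applies) are the natural routes to an actual proof, and either would go beyond what the paper establishes.
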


\begin{conjecture}\label{conj:ZH k_2, K}
Consider the model \eqref{Mainsystem} with all parameters fixed except $(k_2,K)$.  Then, there exist critical points $(k_2^*,K^*)$ such that the model \eqref{Mainsystem} undergoes multiple zero-Hopf bifurcations.
\end{conjecture}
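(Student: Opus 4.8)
The plan is to characterize a zero-Hopf equilibrium of \eqref{Mainsystem} at the coexistence point $E_4$ through the coefficients of the characteristic polynomial $\lambda^3 + \Psi_1\lambda^2 + \Psi_2\lambda + \Psi_3 = 0$ introduced in the proof of Theorem \ref{thm:coexistence}. A zero-Hopf equilibrium has spectrum $\{0, \pm i\omega\}$ with $\omega > 0$, and matching this against the elementary symmetric functions of the roots forces
\begin{equation*}
\Psi_1 = 0, \qquad \Psi_3 = 0, \qquad \Psi_2 = \omega^2 > 0.
\end{equation*}
Geometrically, $\Psi_3 = 0$ is the saddle-node locus $\det(\mathbb{J}_{E_4}) = 0$ of Theorem \ref{thm:saddle-node}, while $\Psi_1 = 0$ collapses the Hopf condition $\Psi_1\Psi_2 = \Psi_3$ of Theorem \ref{thm:hopfk2} to a triviality, so a zero-Hopf point is precisely where the saddle-node and Hopf loci meet; this explains why the MATCONT continuation departs from a limit point and why the setting is codimension two. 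First I would substitute $S^* = ((a_2 - d_3 I^*)/d_2)^{1/r}$, the formula for $P^*$ as the positive root of $w_1 P^2 + w_2 P + w_3 = 0$, and the stated expression for $I^*$ into the Jacobian entries $C_{ij}$, reducing $\Psi_1$ and $\Psi_3$ to functions of $(k_2, K)$ alone.

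The second step is to treat the coupled system $\Psi_1(k_2, K) = 0$, $\Psi_3(k_2, K) = 0$ locally via the implicit function theorem rather than in closed form. Given one admissible root $(k_2^*, K^*)$ — such as $ZH_1$ or $ZH_2$ located numerically — I would verify that the Jacobian
\begin{equation*}
\frac{\partial(\Psi_1, \Psi_3)}{\partial(k_2, K)}\Big|_{(k_2^*, K^*)}
\end{equation*}
is nonsingular. Nonsingularity guarantees that $(k_2, K)$ independently unfold the zero eigenvalue and the real part of the complex pair, which is exactly the transversality hypothesis in the zero-Hopf normal-form theory of \cite{LM16, L14}; combined with $\Psi_2 > 0$ and the nondegeneracy of the quadratic normal-form coefficients, this certifies a genuine zero-Hopf bifurcation at that point.

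To obtain the \emph{multiple} points asserted in the conjecture, I would show that the common zero set $\{\Psi_1 = 0\} \cap \{\Psi_3 = 0\}$ meets the admissible region $\{\Psi_2 > 0\}$ in more than one point. The natural strategy is elimination: solve $\Psi_1 = 0$ for $K$ as a function of $k_2$ (or form a resultant to remove $K$), substitute into $\Psi_3 = 0$, and count the simple sign changes of the resulting single-variable function over the biologically feasible $k_2$ interval. Each such sign change with nonvanishing derivative yields a distinct transversal zero-Hopf point, and exhibiting at least two would establish the claimed multiplicity.

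The hard part will be this global multiplicity count. After back-substitution, $\Psi_1$ and $\Psi_3$ become genuinely transcendental in $(k_2, K)$ through the exponent $1/r$ in $S^{*r}$ and the nested square root defining $P^*$, so an exact analytic enumeration of their common zeros appears out of reach; this is exactly why MATCONT continuation was used to exhibit $ZH_1$ and $ZH_2$ and why the statement is posed as a conjecture rather than a theorem. A fully rigorous resolution would likely require interval-arithmetic or computer-assisted root isolation on the eliminated equation to certify that at least two transversal solutions persist within the feasible parameter window.
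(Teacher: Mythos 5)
The first thing to note is that the paper does not prove this statement: it is posed explicitly as a conjecture, supported only by MATCONT continuation of the limit-point curve in the $(k_2,K)$ plane, which numerically detects two zero-Hopf points $ZH_1$ and $ZH_2$ with spectra $\{0,\pm 2.6876i\}$ and $\{0,\pm 1.9754i\}$; the authors defer any analytical treatment to future work. So there is no proof in the paper to compare against, and your proposal should be judged as a proof \emph{plan}. As such, its skeleton is correct and goes beyond what the paper does: the characterization $\Psi_1=0$, $\Psi_3=0$, $\Psi_2=\omega^2>0$ follows from the elementary symmetric functions of the roots $\{0,\pm i\omega\}$ of $\lambda^3+\Psi_1\lambda^2+\Psi_2\lambda+\Psi_3$, and your identification of the zero-Hopf locus as the intersection of the saddle-node locus ($\Psi_3=-\det(\mathbb{J}_{E_4})=0$) with the Hopf locus ($\Psi_1\Psi_2=\Psi_3$, $\Psi_2>0$) is exactly why MATCONT finds these points along the continued limit-point branch.

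The plan nevertheless stops short of a proof at precisely the steps you flag, and one of them deserves sharper wording. Nonsingularity of $\partial(\Psi_1,\Psi_3)/\partial(k_2,K)$ at a \emph{numerically} located $(k_2^*,K^*)$ gives local isolation and the unfolding transversality, but it does not by itself establish that an exact common zero of $\Psi_1$ and $\Psi_3$ exists near the approximate point; you would need a quantitative existence certificate (Newton--Kantorovich or interval arithmetic) even for a single zero-Hopf point, before any multiplicity count. There is also a prior difficulty your reduction glosses over: $I^*$ is given in the paper as a function of $(S^*,P^*)$ while $S^*$ and $P^*$ are given as functions of $I^*$, so the coexistence equilibrium itself is only defined implicitly, and expressing $\Psi_1,\Psi_3$ ``as functions of $(k_2,K)$ alone'' already requires solving a fixed-point problem whose solvability and smooth dependence on $(k_2,K)$ must be established first. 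Finally, a genuine zero-Hopf bifurcation also requires the nondegeneracy of the normal-form coefficients, which you mention but which is a substantial computation in its own right for this system (the more so given the non-smooth term $S^r$). These gaps are exactly why the statement remains a conjecture; your proposal is a reasonable roadmap toward a computer-assisted theorem, but it is not yet a proof.
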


\subsubsection{Saddle-node transcritical bifurcation}
The saddle-node-transcritical bifurcation ($SNTC$) arises from the intersection of curves corresponding to the saddle-node and transcritical bifurcations. This creates an intriguing scenario where the stability and existence of equilibrium points undergo a sudden change, impacting the system's behavior significantly. For more recent works regarding $SNTC$, please see \cite{SV10, PWAB23} and references therein.

By continuation of the saddle-node point or limit point (i.e. $k_2=0.4417$ at $(0.6418,0.9591,1.5854)$) with $(k_2,K)$ as free parameters, MATCONT package in MATLAB version R2023a detects multiple saddle-node transcritical points $SNTC_1$ and $SNTC_2$. $SNTC_1$ occurred for $k_2=0.4508$ at  $(0.6429,0.9586,1.5812)$ and $K=3.9784$. $SNTC_2$ occurred for $k_2=0.2271$ at  $(0.2602,1.1732,0.8042)$ and $K=5.7454$. These $SNTC's$ are classified into the elliptic case i.e. single zero eigenvalues. This two-parameter bifurcation is illustrated in Figure \ref{fig:ZH-SNTC}(c) and (d). We present a 3D bifurcation surface to this effect depicting the parameters $(k_2,K)$ and the susceptible prey state in Figure \ref{fig:ZH-SNTC}(c). These are unfolded in Figure \ref{fig:ZH-SNTC}(d) on the two-parameter curve $(k_2,K)$.

Next, we formulate a conjecture about saddle-node transcritical bifurcation to summarize the numerical results above.

\begin{conjecture}\label{conj:SNTC k_2,K}
Consider the model \eqref{Mainsystem} with all parameters fixed except $(k_2,K)$.  Then, there exists critical points for which the model \eqref{Mainsystem} experiences multiple saddle-node transcritical bifurcations.
\end{conjecture}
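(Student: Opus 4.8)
The plan is to exhibit each saddle-node transcritical point as a transversal intersection, in the $(k_2,K)$-plane, of the saddle-node bifurcation curve of the coexistence equilibrium $E_4$ with the transcritical bifurcation curve of the infectious-prey-free equilibrium $E_3$, and then to certify the relevant codimension-two normal form (the elliptic case) at each such intersection. The geometric picture justifying this is that $E_4$ degenerates onto $E_3$ exactly as $I^{*}\to 0$, so that the limit point of $E_4$ can collide with the branch point through which $E_4$ emanates from $E_3$; that collision is the organizing center we are after.

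First I would assemble the two defining loci from the codimension-one results already in hand. By Theorem \ref{thm:saddle-nodek2}, the saddle-node locus $\mathcal{S}$ is the zero set of $\Phi(k_2,K):=\det(\mathbb{J}_{E_4})$ together with $\Tr(\mathbb{J}_{E_4})<0$ and the Sotomayor non-degeneracy $Y^{T}[D^2G(X,X)]\neq 0$. By Theorem \ref{thm:transcriticalk2}, the transcritical locus $\mathcal{T}$ is the zero set of $\Theta(k_2,K):=B_{11}B_{33}-B_{13}B_{31}$ evaluated at $E_3$, with the $B_{ij}$ from \eqref{JacE3}. Both $\Phi$ and $\Theta$ are scalar functions of $(k_2,K)$ once the equilibrium relations are substituted, so a candidate SNTC point is any common solution of $\Phi=0$ and $\Theta=0$.

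Next I would reduce the full flow to its one-dimensional center manifold at a candidate point, where the Jacobian carries a simple zero eigenvalue (the numerically observed ``single zero eigenvalue''). Writing $\mu=(k_2-k_2^{*},K-K^{*})$ and using the right/left null vectors $X,Y$, this produces a scalar equation $\dot u=g(u,\mu)$. The saddle-node character in one parameter direction is guaranteed by $g_{uu}\propto Y^{T}[D^2G(X,X)]\neq 0$, while the transcritical character in the complementary direction is encoded in the mixed coefficients $g_{u\mu}$ and in the vanishing/non-vanishing pattern of $g_{\mu}$ inherited from $M_{2}^{T}G_{k}=0$ in Theorem \ref{thm:transcriticalk2}; the elliptic versus hyperbolic classification is then read off from the sign of the discriminant built from these second-order coefficients. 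To obtain existence and, crucially, \emph{multiplicity}, I would solve $\Theta(k_2,K)=0$ for $K=\kappa(k_2)$ by the implicit function theorem (valid wherever $\partial_K\Theta\neq 0$), substitute to form $\psi(k_2):=\Phi(k_2,\kappa(k_2))$, and locate two sign changes of $\psi$ by the intermediate value theorem, yielding the two points $SNTC_1$ and $SNTC_2$.

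The hard part will be that $E_3$ and $E_4$ are genuinely transcendental: $S^{*}=\big((a_2-d_3I^{*})/d_2\big)^{1/r}$ with $r\in(0,1)$, $P^{*}$ solves a quadratic whose coefficients depend on $I^{*}$, and $I^{*}$ is itself only implicitly defined, so $\Phi$ and $\Theta$ are not polynomials and no elimination-based intersection is available. Consequently, both the transversal crossing of $\mathcal{S}$ and $\mathcal{T}$ and the non-vanishing of the SNTC (elliptic) coefficient must be controlled analytically over a parameter window rather than solved in closed form. This obstruction—establishing the required sign changes of $\psi$ and the non-degeneracy of the normal form without explicit equilibria—is precisely why the result is stated as a conjecture substantiated by MATCONT continuation rather than proved outright.
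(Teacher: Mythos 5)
The statement you are addressing is a \emph{conjecture}: the paper offers no proof of it. Its only support in the text is numerical --- MATCONT continuation of the limit point (at $k_2=0.4417$) in the $(k_2,K)$-plane, which detects the two points $SNTC_1$ at $(k_2,K)=(0.4508,3.9784)$ and $SNTC_2$ at $(0.2271,5.7454)$, both classified as the elliptic (single zero eigenvalue) case and displayed in Figure \ref{fig:ZH-SNTC}(c),(d). So there is no paper argument to compare yours against step by step; the relevant comparison is whether your outline would, if completed, actually settle the conjecture.

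Your strategy is the natural one and is consistent with how the paper itself frames an SNTC point (``the intersection of curves corresponding to the saddle-node and transcritical bifurcations''), and the center-manifold/normal-form classification into elliptic versus hyperbolic cases is the standard machinery for this codimension-two singularity. But as written it is a plan, not a proof: every load-bearing step is deferred. You never exhibit a common zero of $\Phi$ and $\Theta$, never verify $\partial_K\Theta\neq 0$ so that the implicit function theorem applies, never establish the two sign changes of $\psi$ that would give multiplicity, and never check the non-degeneracy of the quadratic normal-form coefficients --- and, as you yourself note, the transcendental form of $S^{*}=((a_2-d_3I^{*})/d_2)^{1/r}$ with $r\in(0,1)$ and the implicitly defined $I^{*}$ make each of these verifications nontrivial. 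One further caution: the reported SNTC points sit at interior states with $I^{*}\neq 0$ (e.g.\ $(0.6429,0.9586,1.5812)$), so your geometric picture of $E_4$ degenerating onto $E_3$ as $I^{*}\to 0$ does not obviously describe what MATCONT detected, and you would need to reconcile the two before identifying your candidate points with the paper's. In short, your proposal correctly diagnoses why the statement is a conjecture, but it does not advance it beyond that status.
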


\begin{remark}
It is interesting to observed the loop formed in Figure \ref{fig:ZH-SNTC}(c) as the parameters $(k_2,K)$ are varied. This loop contains two saddle-node transcritical and zero-Hopf points. As far as we are aware, this is the inaugural work in the field of eco-epidemiology incorporating fear effect to demonstrate the occurrence of saddle-node transcritical  and zero-Hopf bifurcations.
\end{remark}

\begin{figure}[hbt!]
\begin{center}
\subfigure[]{
    \includegraphics[width=6.15cm, height=6.1cm]{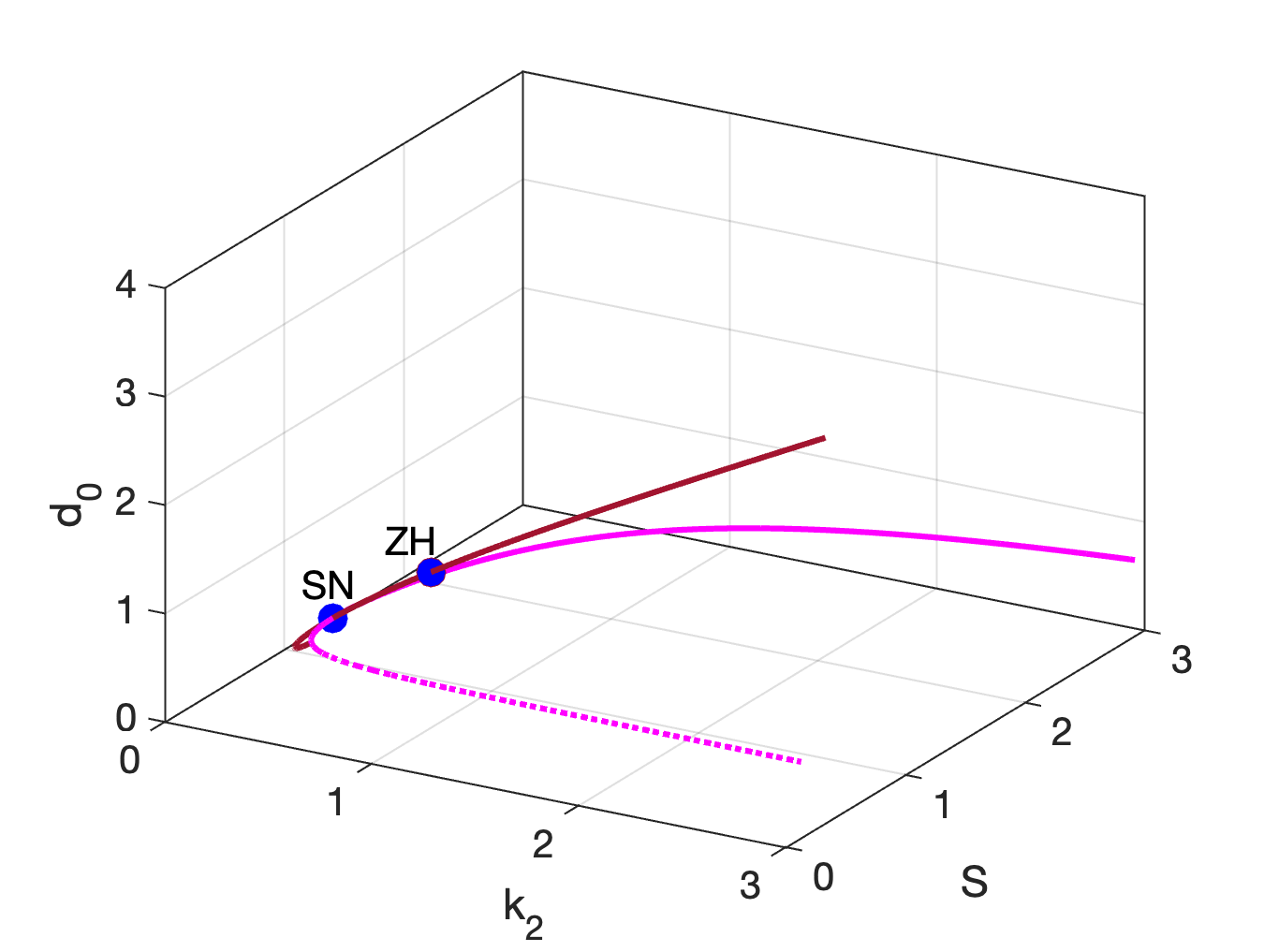}}
\subfigure[]{
    \includegraphics[width=6.15cm, height=6.1cm]{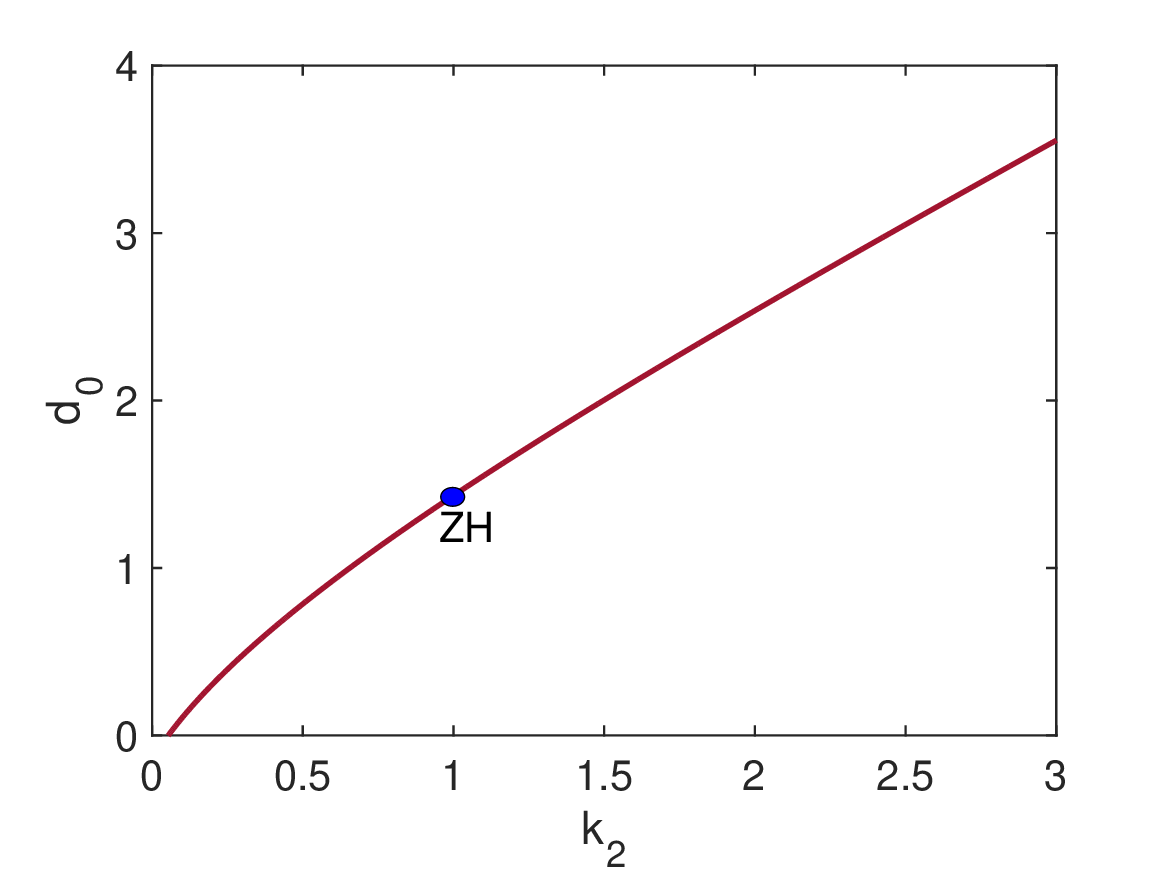}}
 \subfigure[]{    
    \includegraphics[width=6.15cm, height=6.1cm]{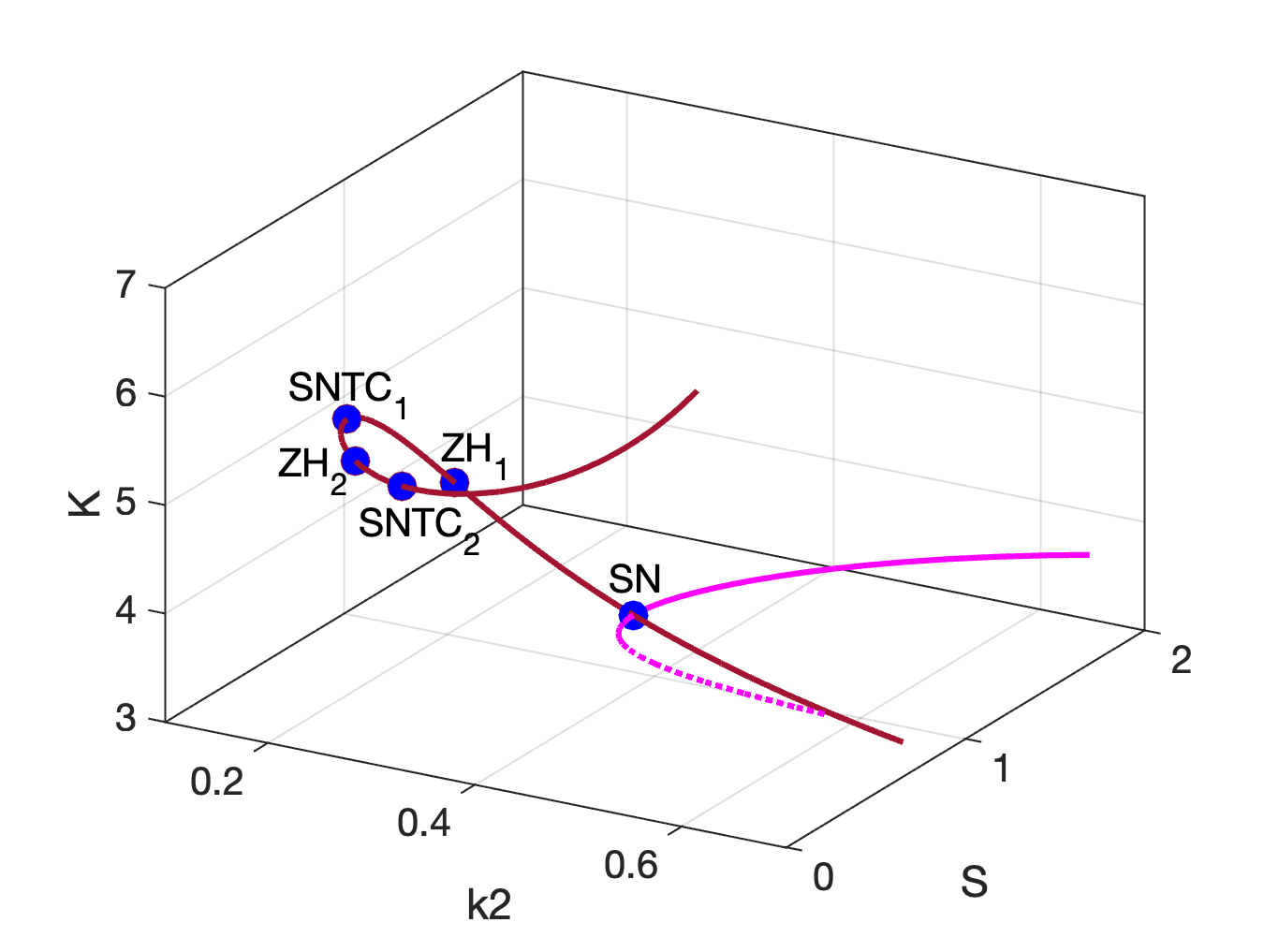}} 
\subfigure[]{    
    \includegraphics[width=6.15cm, height=6.1cm]{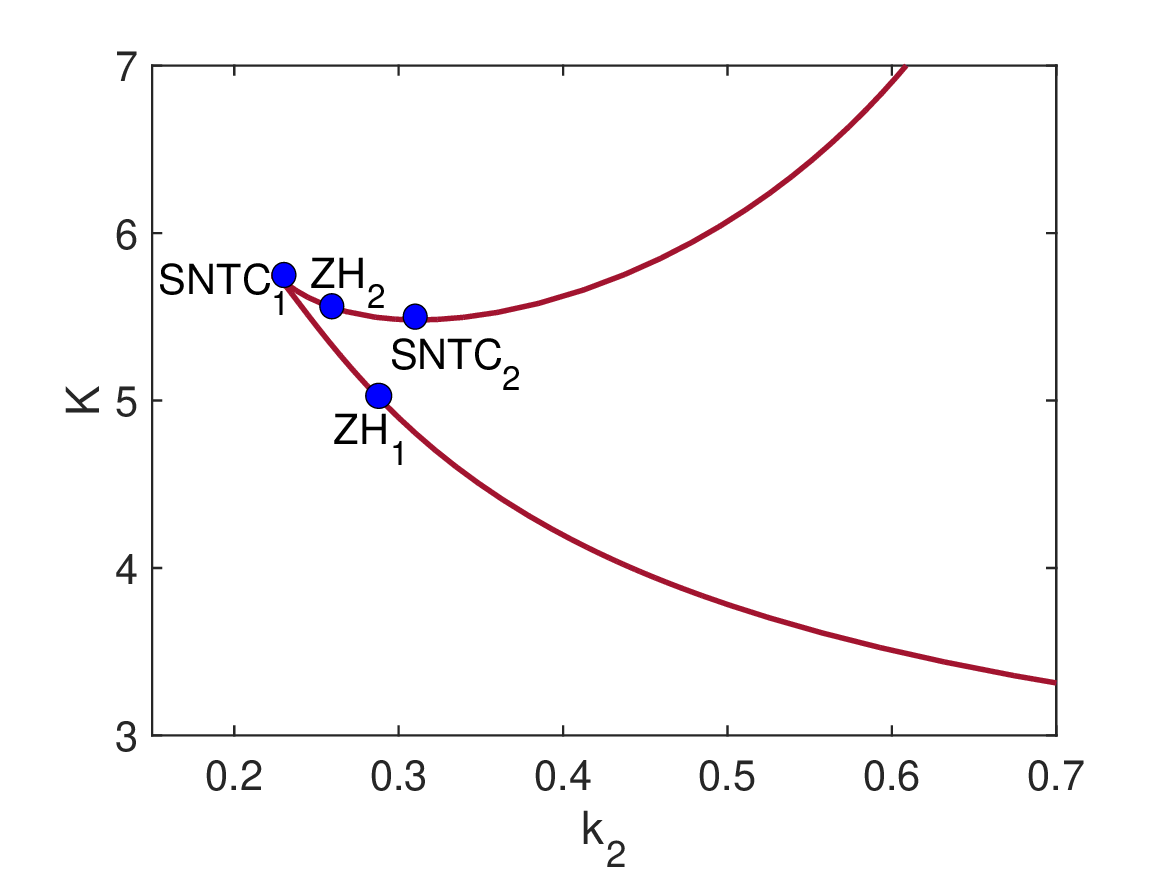}}     
\end{center}
\caption{Two-parameter bifurcation diagrams for model \eqref{Mainsystem} as key parameters are varied. (a) 3-D bifurcation surface of the saddle-node and zero-Hopf curves. (b) Zero-Hopf ($ZH$) point at $(k_2,d_0)=(0.9917,1.4225)$. (c) 3-D bifurcation surface of the saddle-node, saddle-node transcritical, and zero-Hopf curves.   (d) Multiple  zero-Hopf  points at $(k_2,K)=(0.2868,5.0261)$ and $(k_2,K)=(0.2585,5.5590)$. Multiple saddle-node transcritical ($SNTC$) points at $(k_2,K)=(0.4508,3.9784)$ and $(k_2,K)=(0.2271,5.7454)$. All other parameters are fixed and given as $b_0=8,~K=4,~a_0=0.5,~d_0=0.7,~r=0.5,~e_0=4,~a_1=0.4,~d_1=0.7,~a_2=0.8,~k_1=0.1,~d_2=0.4,~d_3=0.5$.}
\label{fig:ZH-SNTC}
\end{figure}

\section{Finite Time Extinction of Susceptible Prey}\label{sec:FTE}
\noindent In this section, we shall investigate the possibility of the susceptible population going extinct in finite time when a key parameter is varied.

\begin{definition}[Finite Time Extinction]
Finite time extinction in population dynamics refers to the scenario where a population completely vanishes within a finite period. It implies the absence of any individuals in the population after a certain time, leading to its extinction. Mathematically, finite time extinction of a population (say the susceptible population) can be defined as follows:
\[\lim_{t\rightarrow t^*}S(t)=0\]
where $t^*$ is a finite time point at which the population becomes extinct. 
\end{definition}

\begin{remark}\label{remart: FTE}
In Figure \ref{fig:FTE}, we observe the time evolution of population densities when the fear of predators that reduces the birth rate of susceptible prey parameter $k_1$ is varied. A stable dynamics of the endemic state is seen in Figure \ref{fig:FTE}(a) when there is no fear of predators, i.e. $k_1=0$. At $k_1=0.2$, we observe an extinction of the susceptible prey in finite time leading to the eventual extinction of both infected prey and predator populations, see Figure \ref{fig:FTE}(b). Additionally, as depicted in Figure \ref{fig:FTE}(b), we note that the specific finite time point at which the susceptible prey population reaches extinction corresponds to $t^*\approx 14.3$. This temporal value is corroborated via Mathematica version 13.2.1.
\end{remark}

\begin{figure}[hbt!]
\begin{center}
   \subfigure[]{
    \includegraphics[width=6.15cm, height=6.1cm]{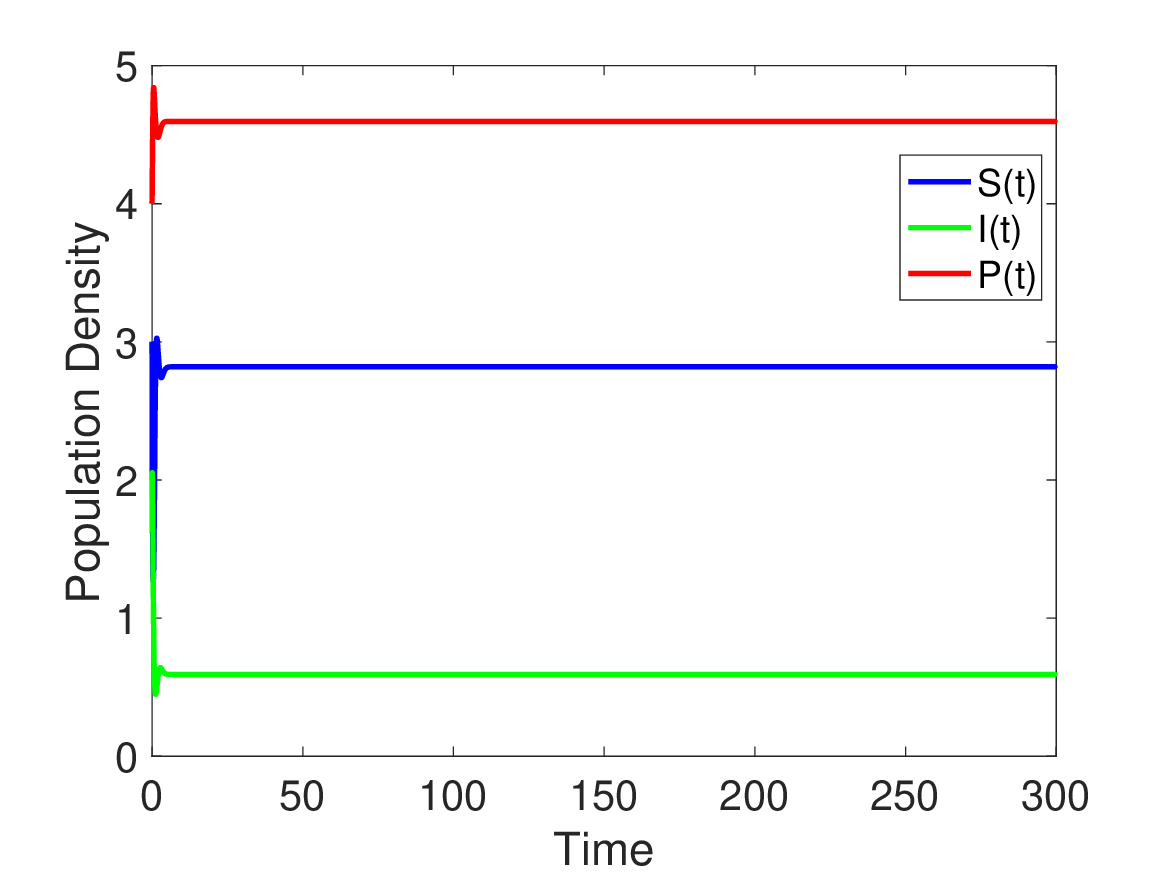}}
\subfigure[]{    
    \includegraphics[width=6.15cm, height=6.1cm]{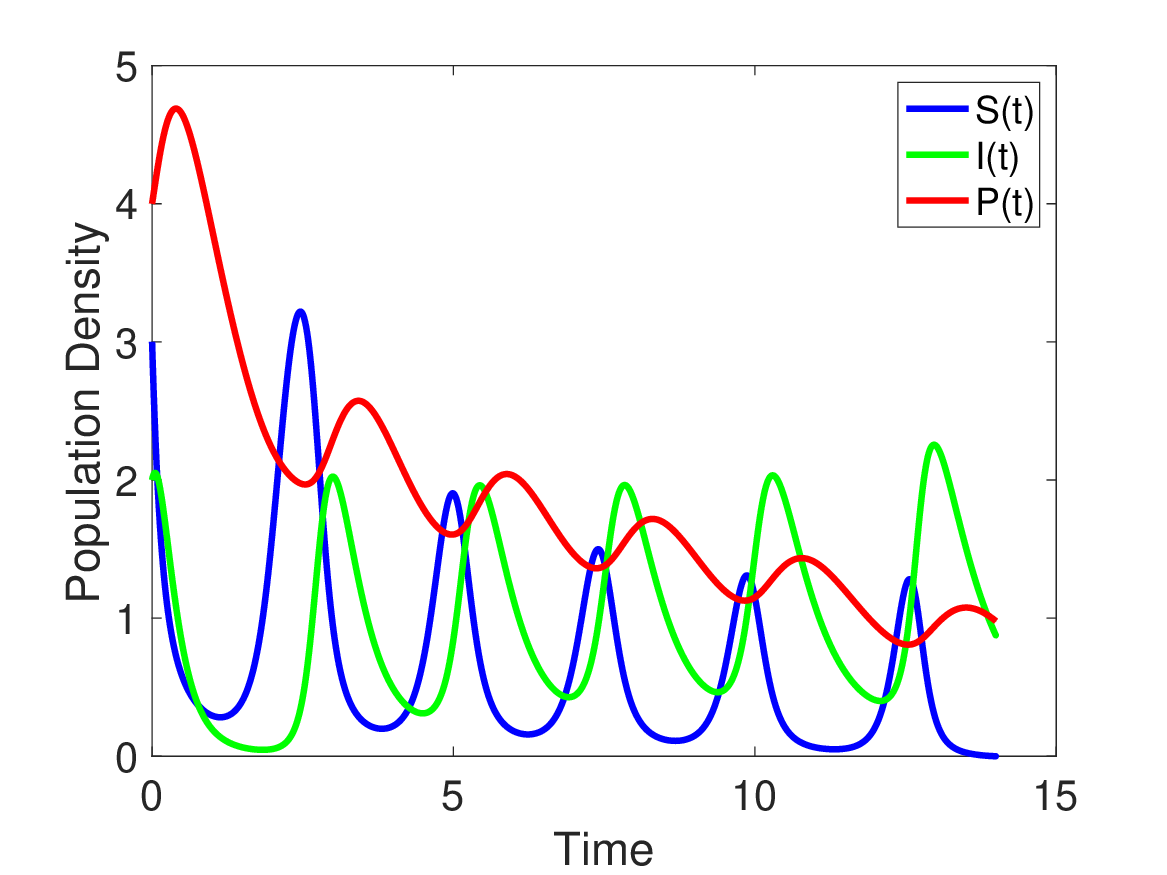}}
\end{center}
\caption{Time evolution of population densities with varying $k_1$. (a) Stable dynamics observed at $k_1=0$ for $E_4=(2.8194,0.5925,4.5959)$ (b) Finte time extinction of $S$ when $k_1=0.2$. All other parameters are fixed and given as $b_0=10,~K=5,~a_0=0.5,~d_0=0.7,~r=0.5,~e_0=6,~a_1=0.4,~d_1=0.7,~a_2=0.8,~k_2=0.8,~d_2=0.3,~d_3=0.5$ and initial condition is given as $(3,2,4)$.}
\label{fig:FTE}
\end{figure}

\section{Disease Management Strategies}\label{sec:Disease management}
\noindent The objective of disease management strategies is twofold: 
\begin{itemize}
\item to regulate the transmission of disease within the prey population and
\item uphold the stability of predator-prey dynamics.
\end{itemize} 
 Next, we will discuss disease management strategies via fear-based mediated disease control.
\subsection{Fear-based mediated disease control} 
Fear-based mediated disease control refers to a disease management approach that utilizes fear as a mechanism to control the spread of infectious diseases. In this strategy, the fear of predation is leveraged to induce behavioral changes in susceptible preys, reducing their contact and interactions with infected prey and thereby minimizing disease transmission.

In the context of the proposed eco-epidemiological model \eqref{Mainsystem}, the dynamics of the system reveal interesting phenomena regarding the interactions between the susceptible prey population, infected prey population, and predators. Specifically, the infected prey population diminishes as the fear of predators that suppresses the birth rate of susceptible prey, represented by the parameter $k_1$, decreases, see Figure \ref{fig:bif:hopfTCk1}. Thus, a decrease in fear results in a reduced tendency of the prey individuals to avoid or flee from predators. As a consequence, the predators are more successful in capturing and consuming the infected prey, thereby reducing the spread of the disease within the population. 

Additionally, as the fear parameter that reduces disease infection, denoted as $k_2$, increases, it leads to the decline and eventual extinction of the infected prey population, see Figure \ref{fig:bif:hopfTCk2}. This can be attributed to the heightened aversion or precautions taken by the susceptible prey population whiles interacting with the infectious prey population, reducing their exposure to the disease and limiting its spread within the population.  

Both of these factors contribute to the eradication of the disease from the ecosystem. The combination of increased fear towards disease transmission and decreased fear towards predators acts as a mechanism that effectively controls and eliminates the disease, leading to the extinction of the infected prey population. This highlights the significant role that fear-based factors play in shaping the dynamics and disease within an ecosystem.

\begin{figure}[hbt!]
\begin{center}
   \subfigure[]{
    \includegraphics[width=4.02cm, height=4.02cm]{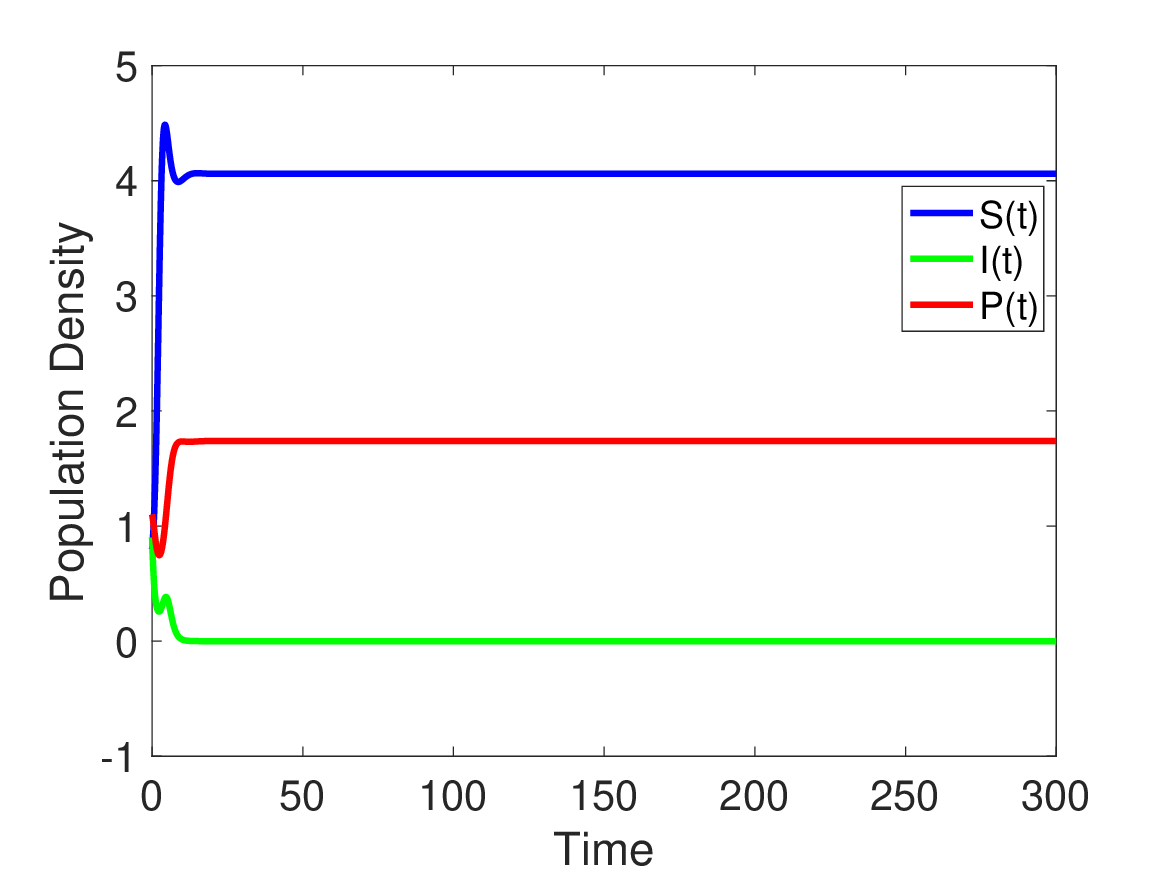}}
\subfigure[]{    
    \includegraphics[width=4.02cm, height=4.02cm]{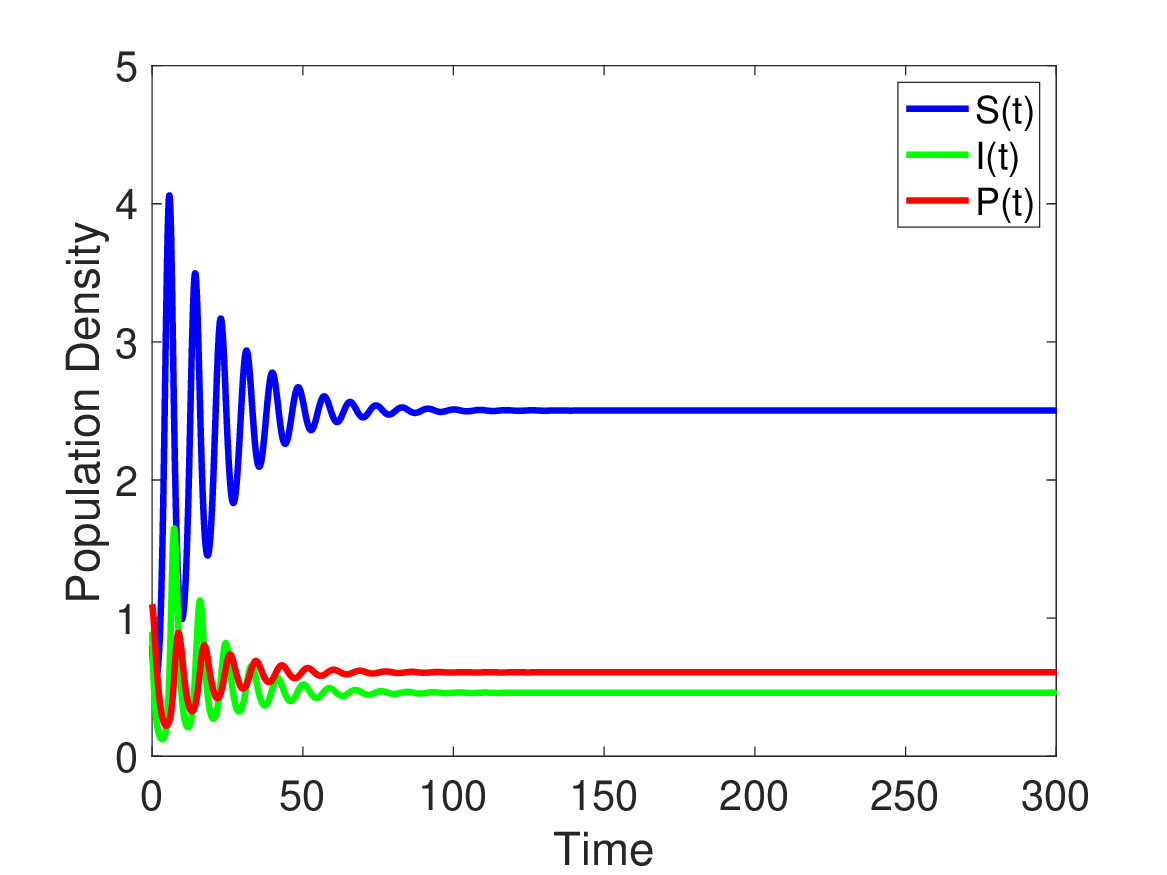}}
    \subfigure[]{
    \includegraphics[width=4.02cm, height=4.02cm]{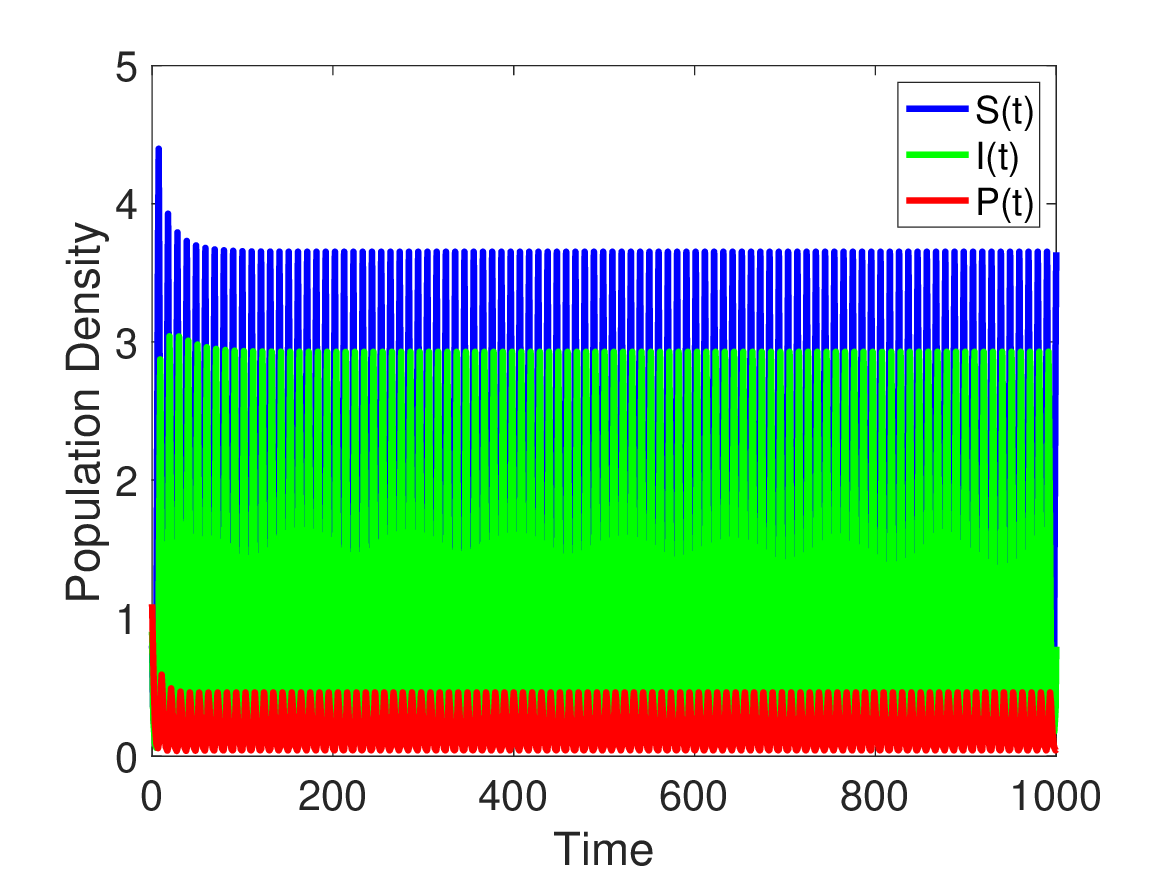}}
    \subfigure[]{
    \includegraphics[width=4.02cm, height=4.02cm]{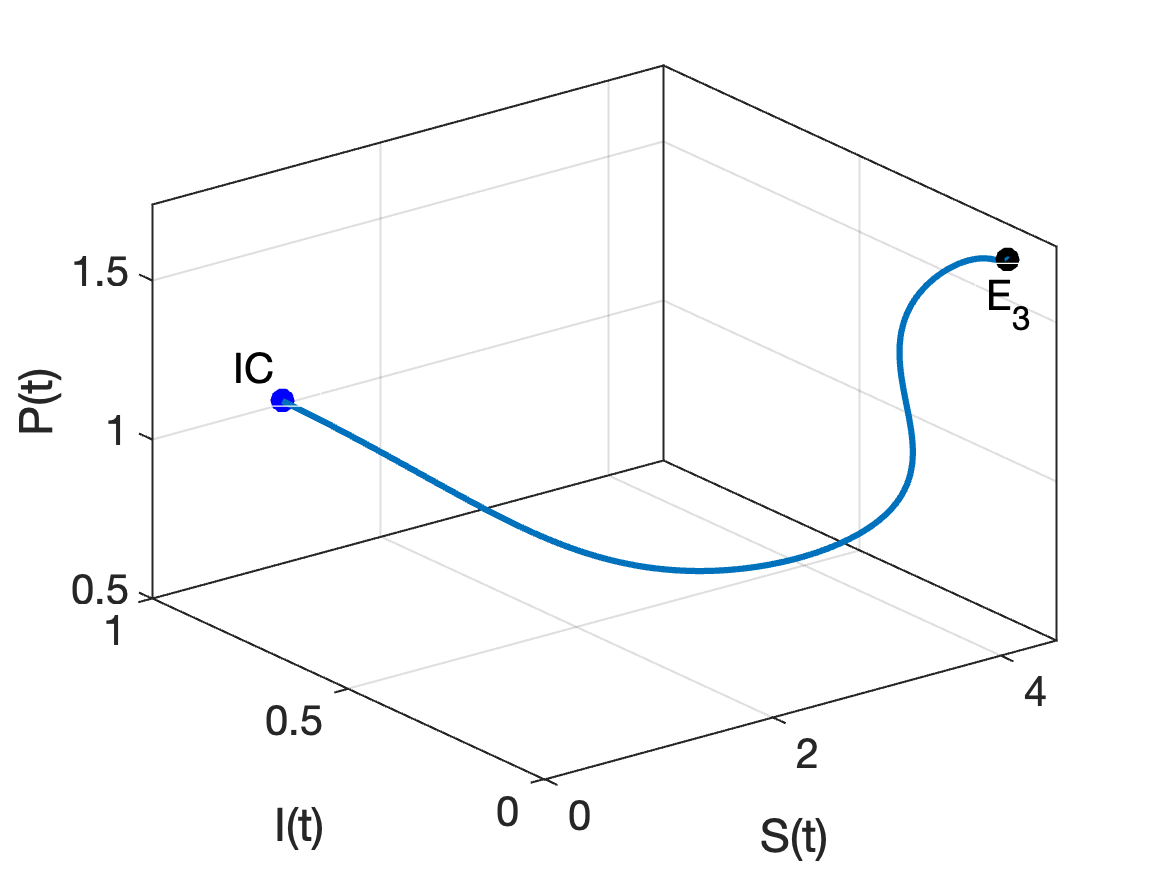}}
\subfigure[]{    
    \includegraphics[width=4.02cm, height=4.02cm]{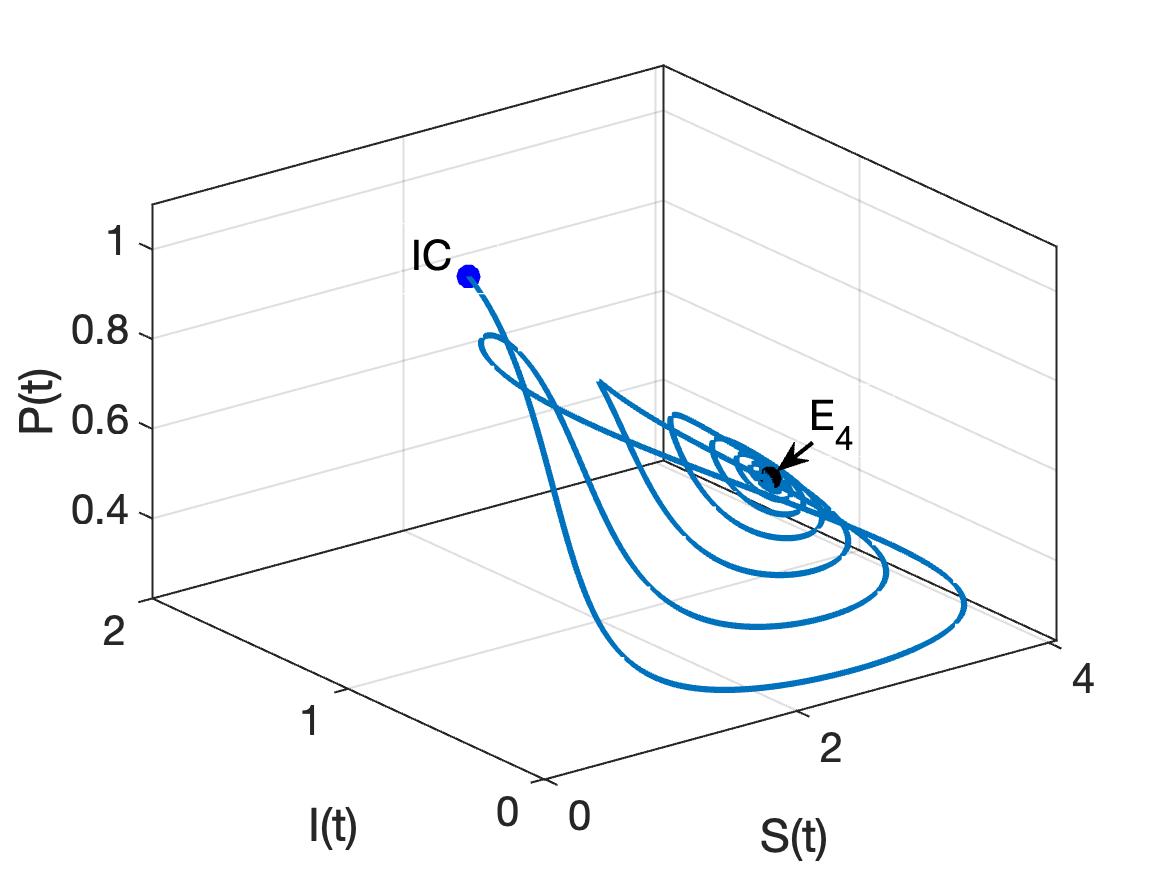}}
    \subfigure[]{
    \includegraphics[width=4.02cm, height=4.02cm]{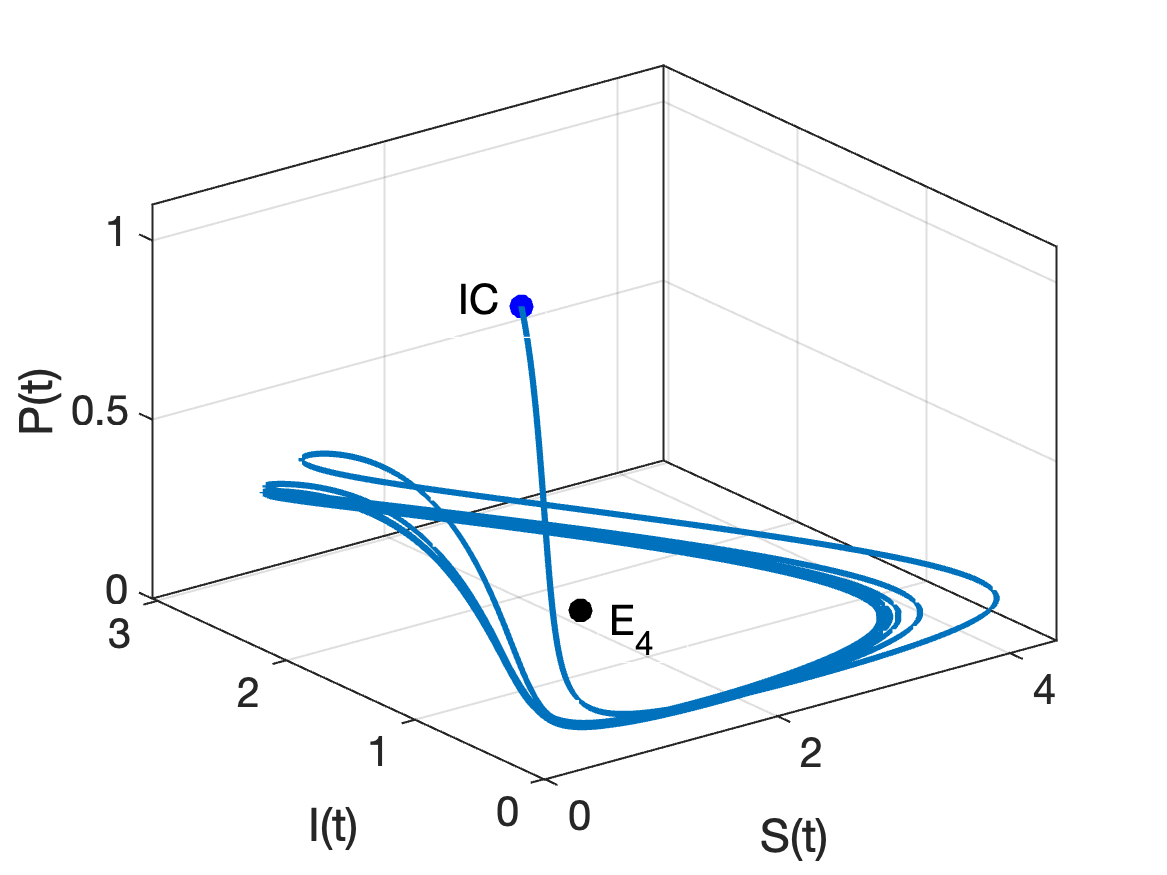}}
\end{center}
\caption{Figures depicting time evolution of population densities and phase portrait with IC$=(0.8,0.9,1.1)$. ((a) \& (d)) Stable $E_3=(4.0600,0,1.7382)$  at $k_1=0$, ((b) \& (e)) stable $E_4=(2.5030,0.4596,0.6076)$ at $k_1=1.2$, ((c) \& (f)) oscillatory coexistence at $k_1=4$ where $E_4=(1.2898,0.8830,0.2102)$. All other parameters are fixed and given as $b_0=2,~K=8,~a_0=0.3,~d_0=0.6,~r=0.7,~e_0=0.5,~k_2=0.85,~a_1=0.4,~d_1=0.7,~a_2=0.8,~d_2=0.3,~d_3=0.5$.}
\label{fig:bif:hopfTCk1}
\end{figure}

\begin{figure}[hbt!]
\begin{center}
   \subfigure[]{
    \includegraphics[width=4cm, height=4cm]{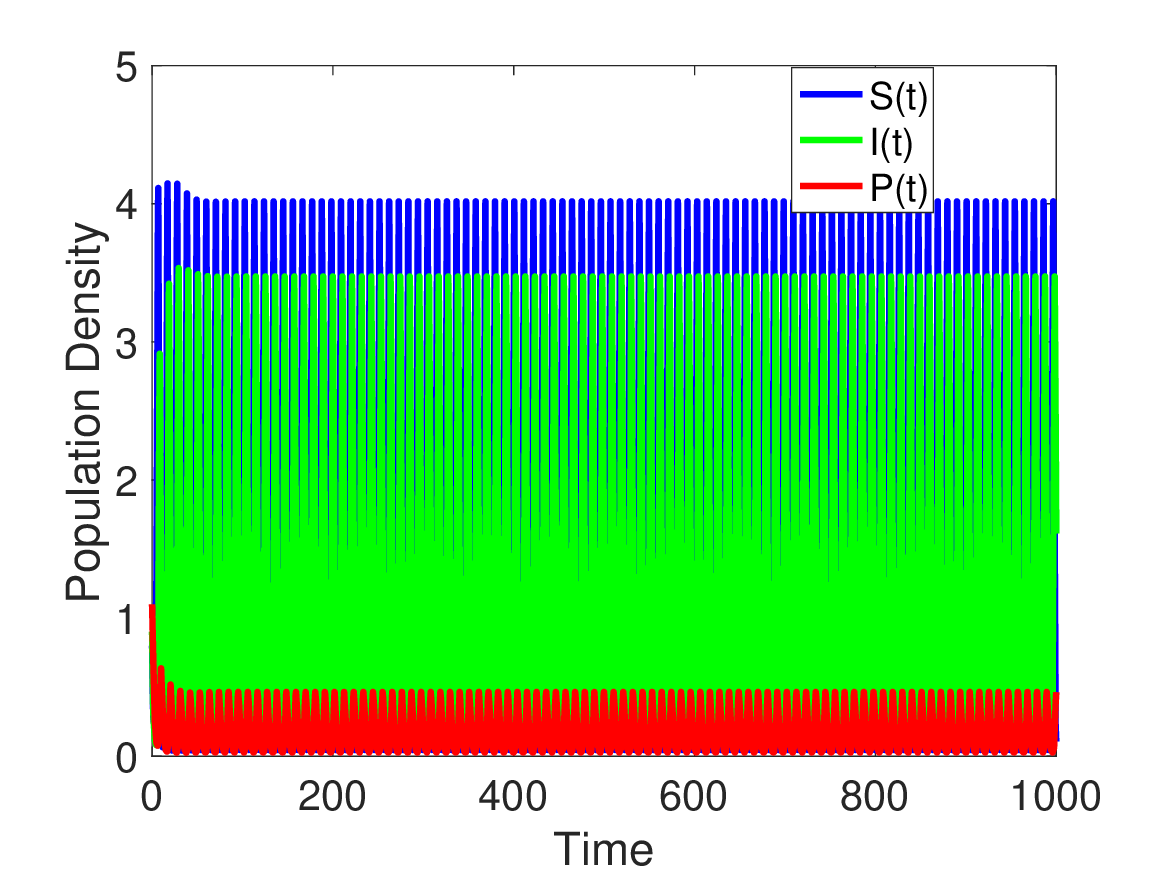}}
\subfigure[]{    
    \includegraphics[width=4cm, height=4cm]{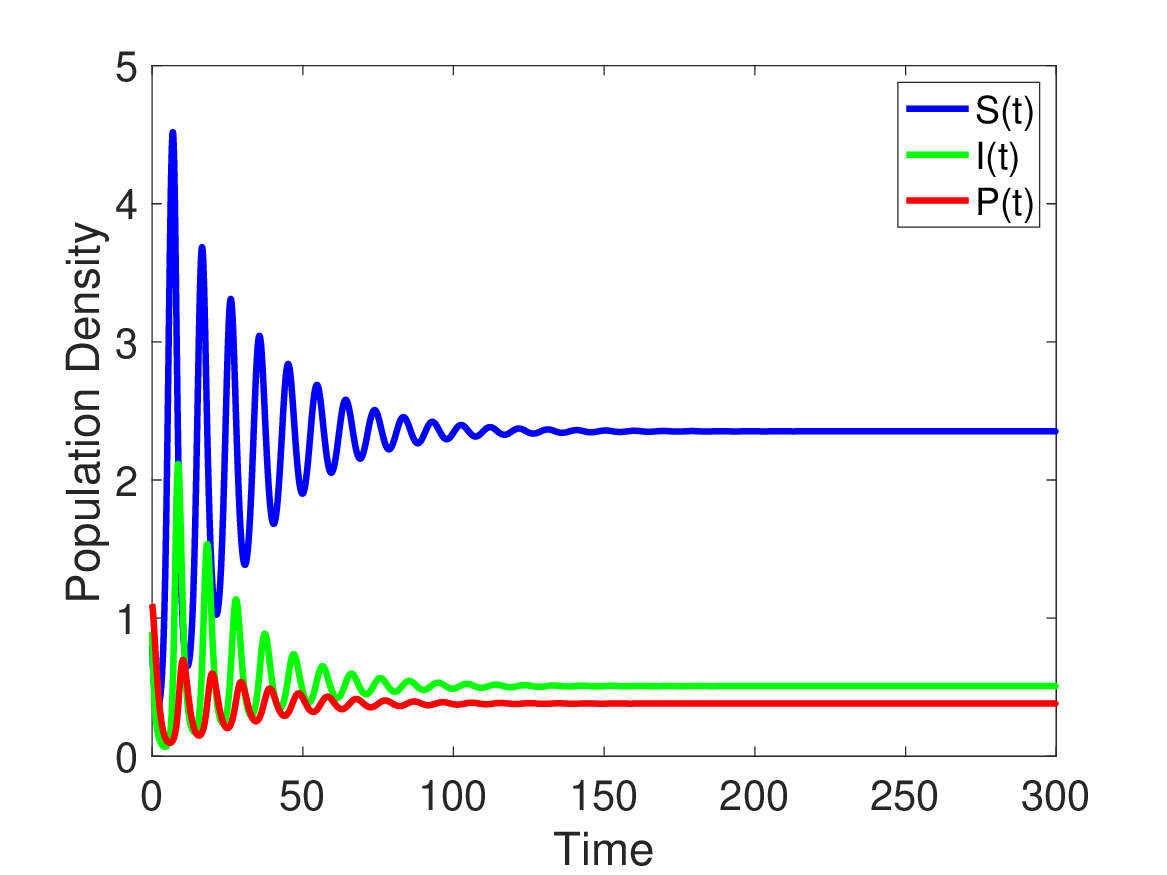}}
     \subfigure[]{
    \includegraphics[width=4cm, height=4cm]{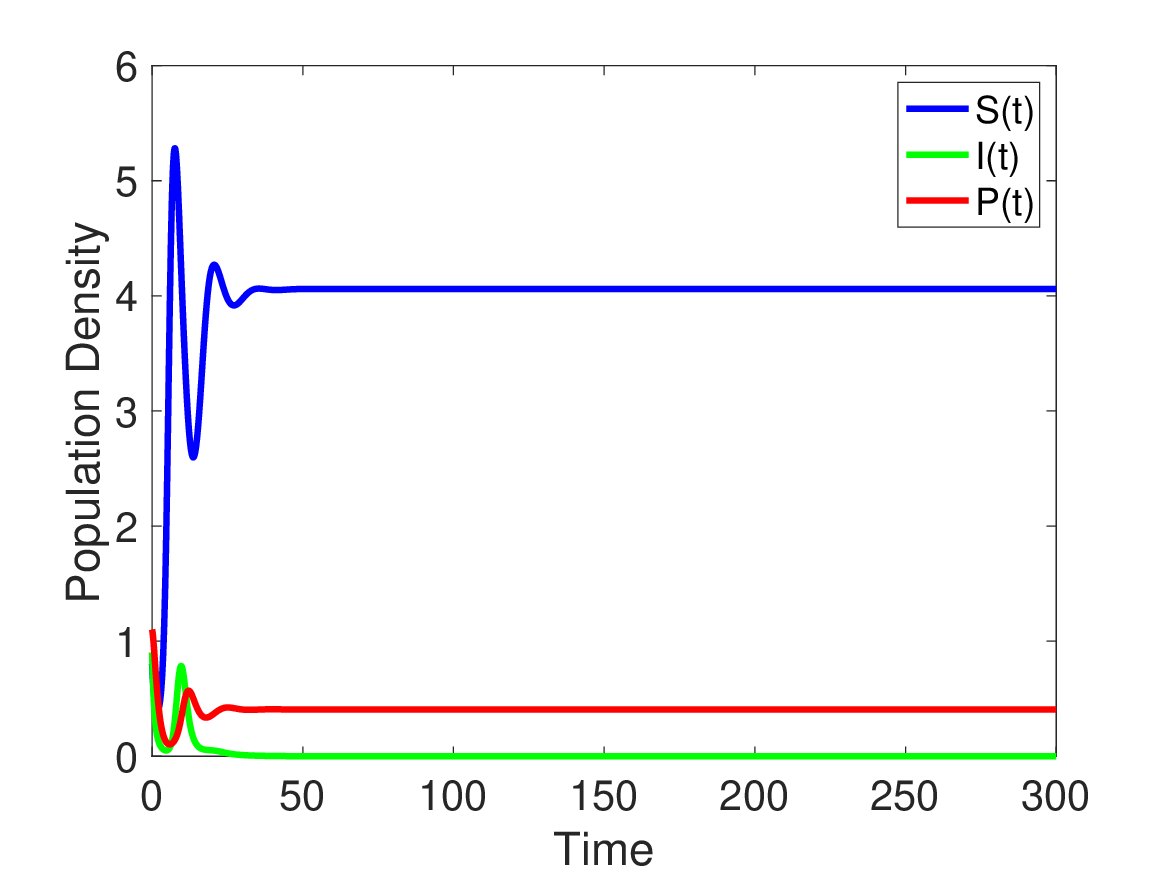}}
\subfigure[]{
    \includegraphics[width=4cm, height=4cm]{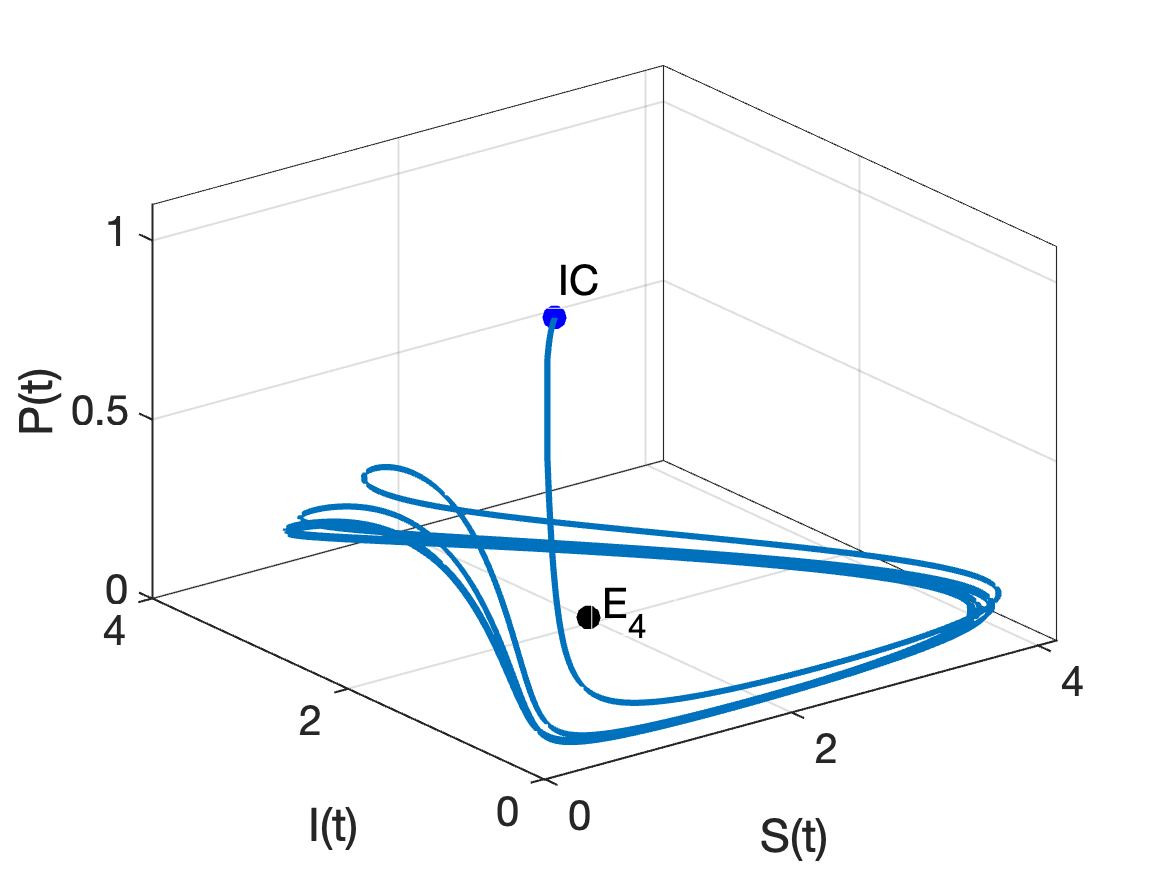}}
\subfigure[]{    
    \includegraphics[width=4cm, height=4cm]{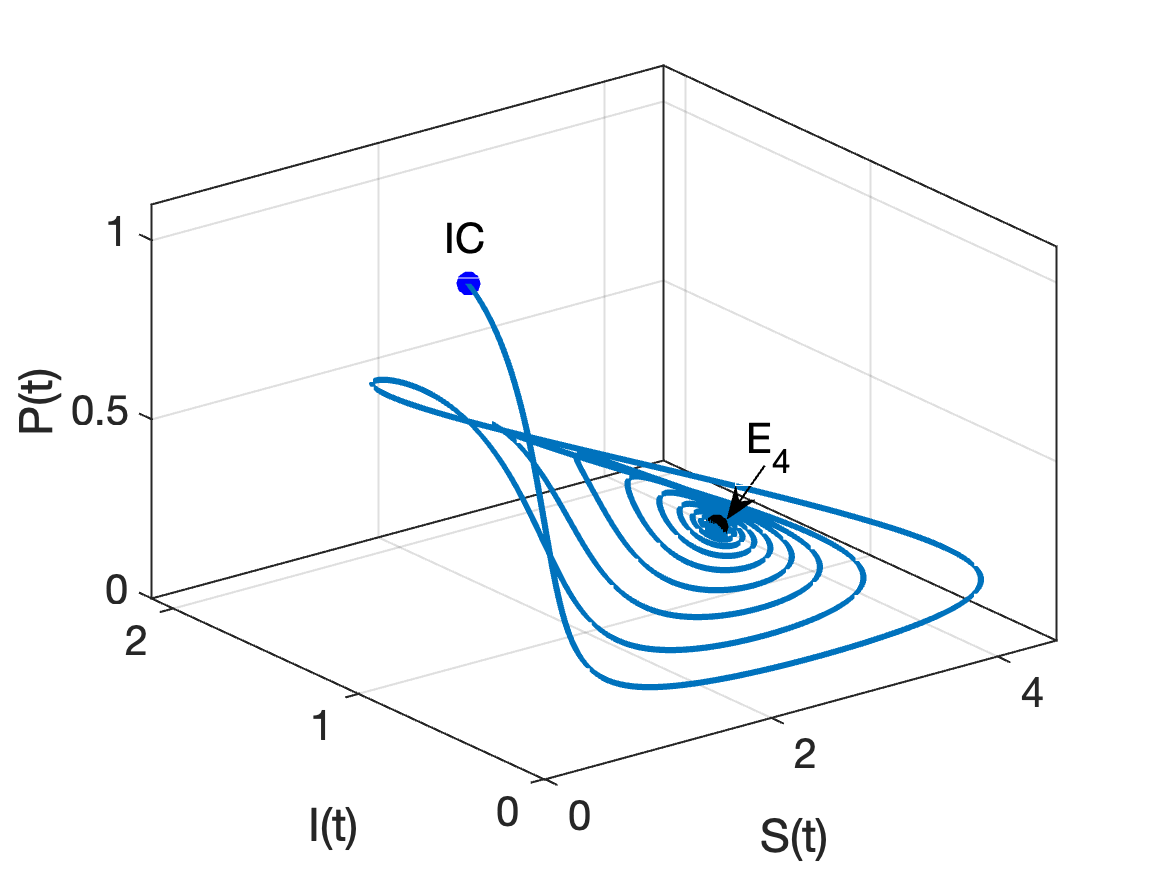}}
     \subfigure[]{
    \includegraphics[width=4cm, height=4cm]{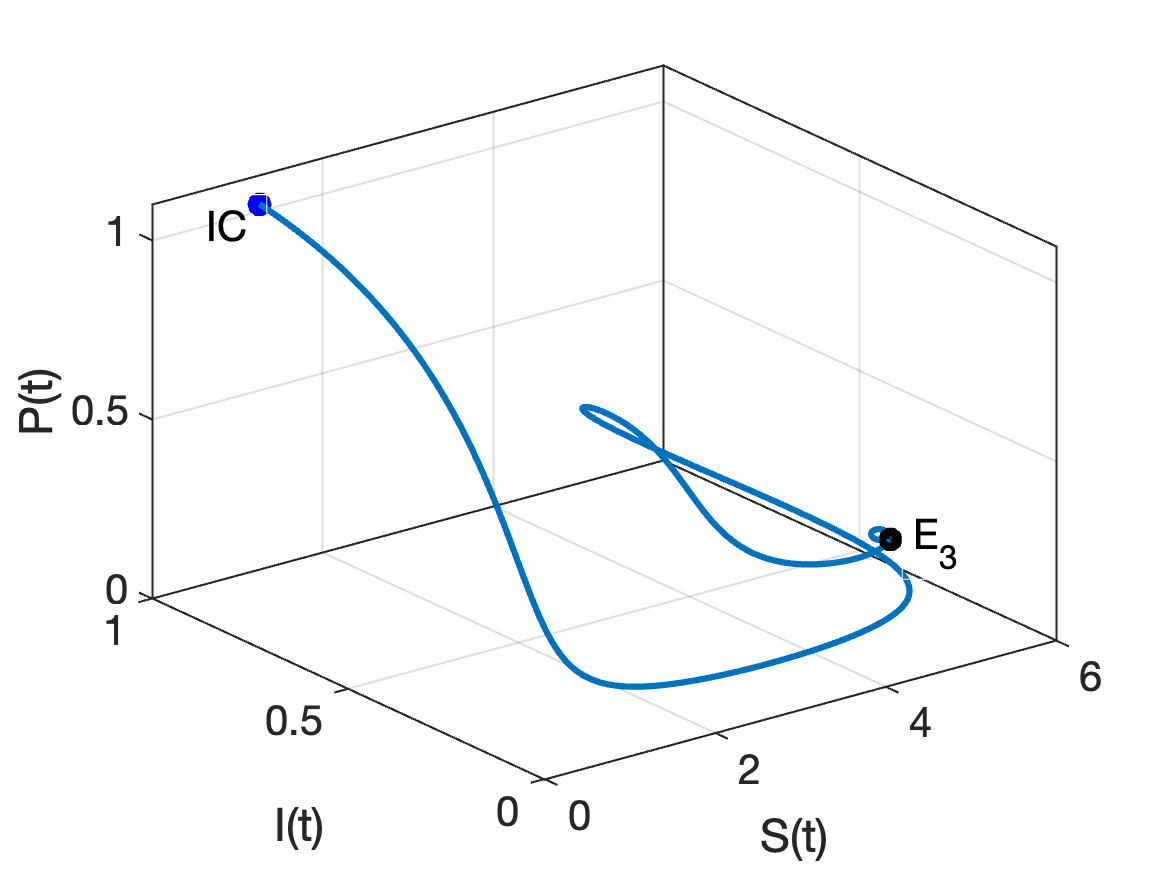}}
\end{center}
\caption{Figures depicting time evolution of population densities and phase portrait with  IC$=(0.8,0.9,1.1)$. ((a) \& (d))  At $k_2=0$, unstable $E_4=(1.1172,0.9516,0.2265)$, ((b) \& (e))  at $k_2=2$, stable $E_4=(2.3524,0.5080,0.3816)$ ((c) \& (f))  at $k_2=7$ stable $E_3=(4.0600,0,0.4070)$.  All other parameters are fixed and given as $b_0=2,k_1=2.8,~~K=8,~a_0=0.3,~d_0=0.6,~r=0.7,~e_0=0.5,~a_1=0.4,~d_1=0.7,~a_2=0.8,~d_2=0.3,~d_3=0.5$.}
\label{fig:bif:hopfTCk2}
\end{figure}

\subsection{Selective predation mediated disease control } 
In this study, we investigate the significant role of predators in regulating disease outbreaks within prey populations. Predators possess the ability to selectively target and consume infected individuals, leading to a higher mortality rate among the infected prey \cite{WA21}. This targeted predation has a direct impact on reducing the prevalence of the disease within the prey population, effectively mitigating the overall disease burden. The interaction between predators and prey becomes crucial as predators play a pivotal role in regulating the population sizes of their prey species. Understanding the interplay between predators and disease dynamics provides valuable insights into disease control strategies and the intricate balance of ecological systems. For further readings on selective predation, see \cite{C78, W21} and references therein.

\begin{definition}[Extinction of Infectious Prey]
Extinction of infectious prey refers to the complete disappearance or elimination of the prey population of individuals that are infected with a specific disease. Mathematically, extinction of a population (say the infectious population) can be defined as follows:
\[\lim_{t\rightarrow \infty}I(t)=0.\]
\end{definition}

\begin{theorem}\label{thm:sus-prey extinction}
For the SIP model described by the equations in model \eqref{Mainsystem}, under a specific parameter set and initial data $(S(0), I(0), P(0))$ that converges uniformly to a stable endemic state, there exists a value $d_1>\frac{e_0K - a_1b_0}{b_0}$ such that the solution for the infectious prey population starting from the same initial data will eventually go extinct, i.e. $\lim_{t \to \infty} I(t) = 0$.
\end{theorem}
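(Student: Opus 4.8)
The plan is to treat the infectious-prey equation as a scalar linear equation with a time-dependent per-capita rate and to show that, once $d_1$ exceeds the stated threshold, this rate is eventually bounded away from zero by a negative constant, forcing the exponential decay of $I$. Writing the second equation of \eqref{Mainsystem} as $\frac{dI}{dt}=I(t)\,\mu(t)$ with $\mu(t):=-a_1+\frac{e_0 S(t)}{1+k_2P(t)}-d_1P(t)$, the positivity established in Theorem \ref{thm:nonnegativity} gives the representation $I(t)=I(0)\exp\!\big(\int_0^t\mu(s)\,ds\big)$, so it suffices to show $\int_0^t\mu(s)\,ds\to-\infty$ as $t\to\infty$. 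The whole argument therefore reduces to sign control on the averaged per-capita rate $\mu$.

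First I would establish an asymptotic upper bound on the susceptible prey. Comparing the first equation with the logistic vector field — using $\frac{1}{1+k_1P}\le 1$, discarding the nonpositive predation and infection terms when $S<K$, and observing directly that $\frac{dS}{dt}<0$ whenever $S>K$ (all terms are then nonpositive) — yields $\frac{dS}{dt}\le b_0S\big(1-\frac{S}{K}\big)-a_0S$ in the relevant regime. A standard scalar comparison then gives $\limsup_{t\to\infty}S(t)\le K\big(1-\frac{a_0}{b_0}\big)\le K$. Since $\frac{e_0S}{1+k_2P}\le e_0S$ and $-d_1P\le 0$, this controls the ``growth'' part of $\mu$, leaving only the predation term to handle.

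The crux is that $-d_1P$ is the only place $d_1$ enters $\mu$, so a $d_1$-dependent threshold cannot come from any pointwise estimate: as a function of $P$ (with $S$ fixed) the rate $\mu$ is decreasing, hence maximised at $P=0$ with value $-a_1+e_0S$, which contains no $d_1$. This is precisely where the hypothesis that the data converges (uniformly) to a stable endemic state $E_4=(S^*,I^*,P^*)$ with $P^*>0$ (Theorem \ref{thm:coexistence}) must be used: along such an orbit the predator density is bounded below, $P(t)\ge P_{\min}>0$, on the window where the trajectory remains in a neighbourhood of $E_4$. On that window $\mu(t)\le -a_1+e_0K-d_1P_{\min}$, and taking $d_1$ above the critical value $\frac{e_0K-a_1b_0}{b_0}$ makes this strictly negative; $I$ is then driven down exponentially, and matching the resulting constant to $\frac{e_0K-a_1b_0}{b_0}$ is bookkeeping once $S_{\max}$ and $P_{\min}$ are pinned down.

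I expect the main obstacle to be upgrading this transient lower bound on $P$ to one valid for all large $t$: as $I\to 0$ the predator loses a food source, so one must rule out $P\to 0$, which would restore a positive per-capita rate $-a_1+e_0S$ and let $I$ rebound. The honest fixes are either a uniform-persistence estimate showing that $d_2S^r>a_2$ keeps $P$ bounded below for the relevant range of $S$, or an argument that once $I$ has been crushed below a small threshold during the transient the remaining dynamics cannot push it back up. I would prove the exponential decay on the invariant neighbourhood of $E_4$ first and then close the argument with one of these persistence estimates, since that global control of $P$ — rather than the differential inequality itself — is the delicate point.
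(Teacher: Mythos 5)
Your proposal follows the same skeleton as the paper's proof: treat the $I$-equation as a scalar linear equation with per-capita rate $\mu(t)=-a_1+\tfrac{e_0S}{1+k_2P}-d_1P$, bound $S$ from above by logistic comparison, bound $P$ from below, and conclude exponential decay of $I$ once $d_1$ exceeds the threshold. Where you genuinely diverge is in the lower bound on $P$, and this is the instructive difference. The paper uses only positivity, $P(t)\ge P(0)e^{-a_2t}$, which decays to zero; it then asserts that extinction follows provided $P(0)e^{-a_2t}\gg 1/d_1$ for all $t>0$, a condition that cannot hold since the left-hand side tends to $0$. You correctly diagnose that no pointwise estimate in which $P$ is allowed to vanish can produce a $d_1$-dependent threshold, and you instead extract a uniform bound $P(t)\ge P_{\min}>0$ from the hypothesis that the orbit tracks the endemic state $E_4$ with $P^*>0$ (or from a uniform-persistence estimate). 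That is the right fix, and the obstacle you flag --- that $P$ might collapse once $I$ is driven down, unless $d_2S^r>a_2$ or a trapping argument keeps the predator persistent --- is precisely the step the paper's proof leaves unresolved. Two small bookkeeping points if you carry this out: the paper's bound $S(t)\le K/b_0$ does not follow from logistic comparison (which gives $\limsup_{t\to\infty}S(t)\le K$, or $K(1-a_0/b_0)$ with your sharper inequality), so the stated threshold $d_1>\tfrac{e_0K-a_1b_0}{b_0}$ will come out with different constants in your version; and your persistence step is a proposal rather than a proof, so the argument is not yet closed --- but in that respect you are no worse off than the paper, and your identification of where the real work lies is more accurate than the published argument.
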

\begin{proof}
We assume a particular parameter set and initial data $(S(0), I(0), P(0))$ such that they exhibit uniform convergence to a stable endemic state.
Consider the susceptible prey equation given in model \eqref{Mainsystem}, then
\begin{align}
\frac{dS}{dt}\leq b_0S\left(1 - \frac{S}{K}\right)
\end{align}
and $S(t)\leq\frac{K}{b_0}$ for all $t>0$. This is true via comparison with the logistic equation.
Next, we consider the predator equation given in the model \eqref{Mainsystem}. Via positivity
\begin{align}
\frac{dP}{dt}\geq -a_2P
\end{align}
we obtain
\begin{align}\label{cond:pgreater}
P(t)\geq P(0)e^{-a_2t}.
\end{align}
By substituting the upper bound of $S(t)$ and \eqref{cond:pgreater} into the equation governing the infectious prey population in model \eqref{Mainsystem}, we obtain
\begin{align}\label{cond:Iless}
\frac{dI}{dt} &\leq -a_1I+\dfrac{e_0K}{b_0}I-d_1P(0)e^{-a_2t}I \nonumber \\ 
&\leq -\left(a_1-\dfrac{e_0K}{b_0}+d_1P(0)e^{-a_2t}\right)I  
\end{align}
We observe that the differential equation given by
\begin{align}
\frac{dI}{dt} &\leq -\left(a_1-\dfrac{e_0K}{b_0}+d_1\right)I
\end{align}
implies that if the condition $d_1>\frac{e_0K-a_1b_0}{b_0}$ holds, the infectious prey population will inevitably go extinct. To ensure the eventual extinction of the infectious prey population, we want the right-hand side of the inequality in \eqref{cond:Iless} to be negative for all $t > 0$, thus
 $$\left(a_1-\frac{e_0K}{b_0}+d_1P(0)e^{-a_2t}\right)>0, \qquad \text{for all} \; t>0.$$ 
 We note that 
 \begin{equation*}
 \lim_{t\rightarrow \infty}d_1P(0)e^{-a_2t}=0.
 \end{equation*}
 
\noindent Therefore, to ensure that $d_1P(0)e^{-a_2t}$ remains positive for all $t > 0$, we need $P(0)e^{-a_2t} \gg \frac{1}{d_1}$. This condition implies that the initial population of predators, $P(0)$, should be sufficiently large. This ensures that the infectious prey population will eventually go extinct. 
\end{proof}

\begin{remark}
To investigate the impact of predator-prey interactions on disease dynamics, we manipulated the attack rate of the predator on the infectious prey, denoted as $d_1$. By systematically increasing $d_1$ from an initial value of $0.8$ to a higher value of $6$, we observed a significant transition in the model solutions. Specifically, in Figure \ref{fig:selective predation}, we visually captured the shift from a stable coexistence (or endemic) state at equilibrium $E_4$ to an infectious-free state at equilibrium $E_3$. 
\end{remark}

\begin{figure}[hbt!]
\begin{center}
    \includegraphics[width=7.5cm, height=7cm]{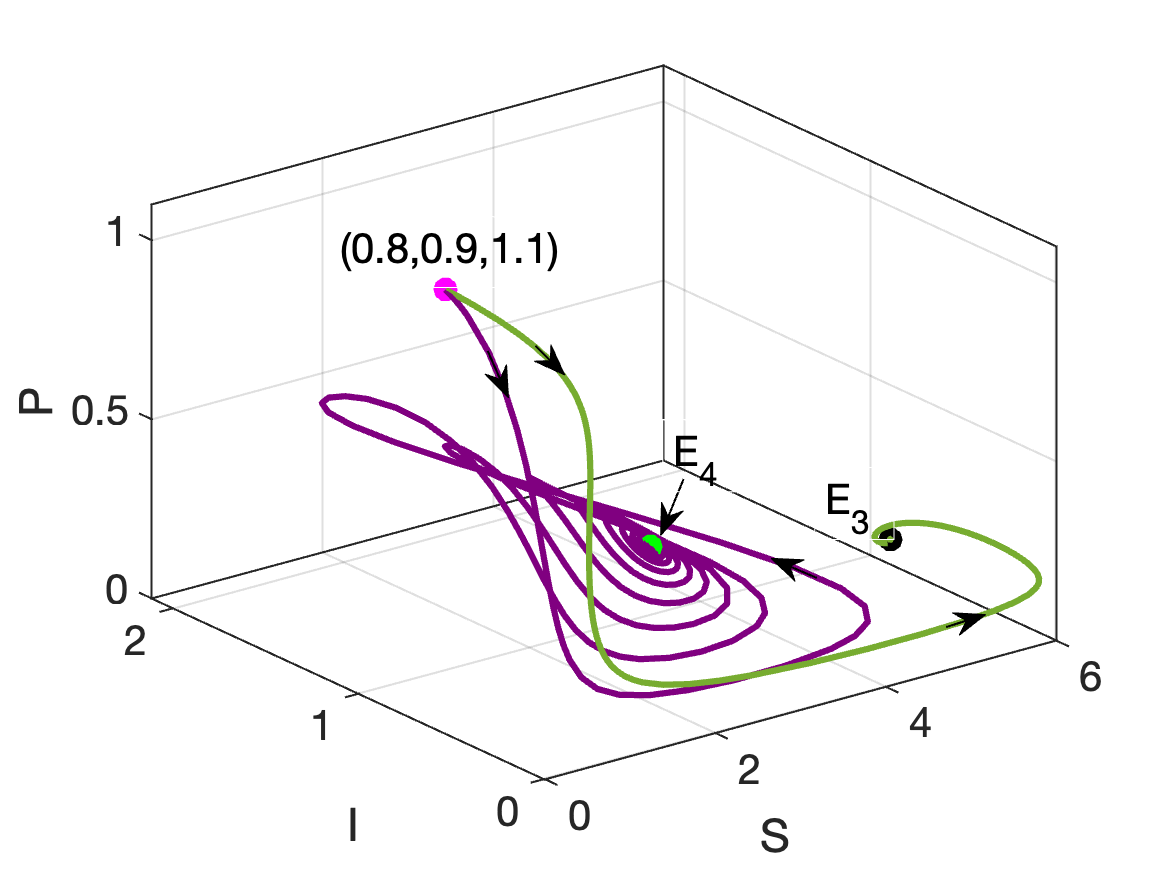}
\end{center}
\caption{Figure illustrating the effect of selective predation via the attack rate on infectious prey. Stable dynamics at $d_1=0.8$ for $E_4=(2.3492,0.5091,0.3758)$ and at $d_1=6$, $E_3=(4.0600,0,0.4070)$ is stable. All other parameters are fixed and given in the caption of Figure \ref{fig:bif:hopfTCk2}.}
\label{fig:selective predation}
\end{figure}

\section{Conclusion}\label{sec:conclusion}
\noindent In this research, we proposed and analyzed an eco-epidemiological model incorporating dual fear effect, prey aggregation, infectious disease in prey. We demonstrated the well-behavedness of the model via nonnegativity and boundedness of solutions. We analyzed the existence of biologically significant equilibria within model \eqref{Mainsystem} and investigated their local stability. Also, we explored and provided mathematical proofs to some co-dimension one bifurcations including saddle-node, Hopf, and transcritical bifurcations. By investigating these bifurcations, researchers acquire valuable understanding regarding the emergence of intricate dynamics and may possess the ability to regulate and manipulate system behavior.

We obtained numerically an organizing center in co-dimension two where a loop is formed. This is illustrated in a 3-D plot shown in Figure \ref{fig:ZH-SNTC}(c) for the free parameters $k_2$ and $K$. Additionally, a zero-Hopf bifurcation occurred in co-dimension two as $k_2$ was varied in conjunction with $d_0$ or $K$, see Figure \ref{fig:ZH-SNTC}(a) and (b). We observed the existence of multiple zero-Hopf bifurcation in Figure \ref{fig:ZH-SNTC}(c). To the best of our knowledge, this is the inaugural work in the field of eco-epidemiology incorporating dual fear effects to demonstrate the occurrence of saddle-node transcritical and zero-Hopf bifurcations.

Finite time extinction of the susceptible prey population is possible with model \eqref{Mainsystem}. This phenomenon seen due to the singularity at the origin from the aggregation term ($S^r$ for $0<r<1$). This leads to non-uniqueness of solutions in backward time.  We observed a transition from a stable endemic state to an extinction of the susceptible population in finite time followed by the eventual extinction of the infectious and predator populations in infinite time. This is corroborated with Figure \ref{fig:FTE}.

Fear-based disease control involves utilizing fear as a motivator to implement measures and strategies aimed at preventing and controlling the spread of diseases. By instilling a sense of fear or concern about the consequences of the disease, populations are more likely to take necessary precautions and engage in behaviors that help mitigate the risk of infection and transmission. We observed via numerical simulation in Figure \ref{fig:bif:hopfTCk1} that the continuous decrease in fear of predators has a cascading effect in reducing the spread of disease. Also, we noticed that as the level of fear that lowers disease transmission increases, it leads to a decline of the infectious prey population and eventual extinction, see Figure \ref{fig:bif:hopfTCk2}.

Furthermore, we manipulate the attack rate of the predator on the infectious prey. A transition in the solutions of the model were observed as we continuously increase the predator attack rate on infectious prey. We observed a shift from a stable endemic state to an infectious-free state highlighting the significant role of increased predator attack in eradicating the disease. This is depicted in Figure \ref{fig:selective predation} under significant biological assumptions. We have provided clear mathematical evidence that selective predation can strongly impact disease transmission.  Our findings are in accordance with experimental results in ecology and evolution \cite{WA21}.

As an interesting future endeavor, the authors shall study and investigate the number of periodic orbits (or limit cycles) bifurcating from the model \eqref{Mainsystem} for the zero-Hopf equilibrium points as certain key parameter values are varied, see Figure \ref{fig:ZH-SNTC}. To achieve this, we will possibly employ some classical methods in dynamical systems such as the normal form theory \cite{ZP20} and time-averaging theory \cite{LM16, SV07}. The authors will in addition explore techniques needed to tentative prove conjectures \ref{conj:ZH k_2,b_0} - \ref{conj:SNTC k_2,K} in a forthcoming paper.

\section*{Acknowledgment}
\noindent KAF, SPW, and KHB would like to express their profound gratitude to CURM minigrant and NSF grant DMS-1722563.

\section*{Conflict of interest}
\noindent There is no conflict of interest.

\section*{Author's Contribution}
\noindent All authors contributed equally to this manuscript. All authors read and approved the final manuscript.

\section*{Availability of data and materials}
\noindent Not applicable.
%


\bibliographystyle{unsrt}
\bibliography{manu1.bib}

\end{document}